\theoremstyle{plain}
\newtheorem{thm}{Theorem}
  \theoremstyle{definition}
  \newtheorem{defn}[thm]{Definition}
  \theoremstyle{definition}
  \theoremstyle{remark}
  \newtheorem{rem}[thm]{Remark}
  \theoremstyle{plain}
  \theoremstyle{plain}
  \newtheorem{lem}[thm]{Lemma}
  \theoremstyle{plain}
  \newtheorem{cor}[thm]{Corollary}
 \theoremstyle{definition}
  \theoremstyle{remark}
  \newtheorem*{rem*}{Remark}
  \theoremstyle{definition}
\theoremstyle{plain}
\newcommand{\N}{\mathbb{N}}
\newcommand{\R}{{\mathbb{R}}}
\newcommand{\C}{{\mathbb{C}}}
\newcommand{\Z}{{\mathbb{Z}}}
\newcommand{\dd}{{{\rm d}}}
\newcommand{\ii}{{\rm i}}
\newcommand{\diag}{\mathop\mathrm{diag}\nolimits}
\newcommand{\spn}{\mathop\mathrm{span}\nolimits}
\newcommand{\spec}{\mathop\mathrm{spec}\nolimits}
\newcommand{\sign}{\mathop\mathrm{sign}\nolimits}
\renewcommand{\Re}{\mathop\mathrm{Re}\nolimits}
\renewcommand{\Im}{\mathop\mathrm{Im}\nolimits}
\newcommand{\supp}{\mathop\mathrm{supp}\nolimits}
\newcommand{\Tr}{\mathop\mathrm{Tr}\nolimits}
\newcommand{\dist}{\mathop\mathrm{dist}\nolimits}
\newcommand{\Arg}{\mathop\mathrm{Arg}\nolimits}
\begin{document}

\title[Non-self-adjoint Toeplitz matrices with purely real spectrum]
{Non-self-adjoint Toeplitz matrices whose principal submatrices have real spectrum}

\author{Boris Shapiro}
\address[Boris Shapiro]{
	Department of Mathematics, 
	Stockholm University,
	Kr\"{a}ftriket 5,
	SE - 106 91 Stockholm, Sweden}
\email{shapiro@math.su.se}

\author{Franti\v sek \v Stampach}
\address[Franti\v sek \v Stampach]{
	Department of Mathematics, 
	Stockholm University,
	Kr\"{a}ftriket 5,
	SE - 106 91 Stockholm, Sweden}
\email{stampach@math.su.se}

\subjclass[2010]{15B05, 47B36, 33C47}

\keywords{banded Toeplitz matrix, asymptotic eigenvalue distribution, real spectrum, non-self-adjoint matrices, moment problem, Jacobi matrices, orthogonal polynomials}

\date{\today}

\begin{abstract}
We introduce and investigate a class of complex semi-infinite banded Toeplitz matrices satisfying the condition that the spectra of their principal submatrices 
accumulate onto a real interval when the size of the submatrix grows to $\infty$. We prove that a banded Toeplitz matrix belongs to this class if and only if its symbol 
has real values on a Jordan curve located in $\C\setminus\{0\}$. Surprisingly, it turns out that, if such a Jordan curve is present, the spectra of all the submatrices have to be real.
The latter claim is also proven for matrices given by a more general symbol. Further, the limiting eigenvalue distribution of a real banded Toeplitz matrix is related to the solution 
of a determinate Hamburger moment problem. We use this to derive a formula for the limiting measure using a parametrization of the Jordan curve. We also describe a Jacobi operator, 
whose spectral measure coincides with the limiting measure. We show that this Jacobi operator is a compact perturbation of a tridiagonal Toeplitz matrix. Our main results are illustrated 
by several concrete examples; some of them allow an explicit analytic treatment, while some are only treated numerically.\\
\textbf{Update}: The proof of Theorem 8 contains an error. An erratum is attached in the end.
\end{abstract}

\maketitle

\section{Introduction}
 
With a Laurent series 
\begin{equation}
 a(z)=\sum_{k=-\infty}^{\infty}a_{k}z^{k} 
\label{eq:def_a}
\end{equation}
with complex coefficients $a_{k}$, one can associate a semi-infinite Toeplitz matrix $T(a)$ whose elements are given by 
$$(T(a))_{i,j}=a_{i-j},\quad \forall i,j\in\N_{0}.$$ 
In the theory of Toeplitz matrices, $a$ is referred to as the symbol of $T(a)$. According to what properties of $T(a)$ are of interest,
the class of symbols is usually further restricted. For the spectral analysis of $T(a)$, the special role is played by the Wiener algebra
which consists of symbols $a$ defined on the unit circle $\mathbb{T}$ whose Laurent series~\eqref{eq:def_a} is absolutely convergent for $z\in\mathbb{T}$.
In this case, $T(a)$ is a bounded operator on the Banach space $\ell^{p}(\N)$, for any $1\leq p \leq \infty$, and its spectral properties
are well-known, see \cite{bottcher05}.

Rather than actual spectrum of $T(a)$, this paper focuses on an asymptotic spectrum of $T(a)$, meaning the set of all limit points of eigenvalues of
matrices $T_{n}(a)$, as $n\to\infty$, where $T_{n}(a)$ denotes the $n\times n$ principal submatrix of $T(a)$. An intimately related subject concerns 
the asymptotic eigenvalue distribution of Toeplitz matrices for which the most complete results were obtained if the symbol of the matrix is a Laurent polynomial.
More precisely, one considers symbols of the form
\begin{equation}
 b(z)=\sum_{k=-r}^{s}a_{k}z^{k}, \;\mbox{ where }\; a_{-r}a_{s}\neq0 \;\mbox{ and }\; r,s\in\N,
\label{eq:def_b}
\end{equation}
for which the Toeplitz matrix $T(b)$ is banded and is not lower or upper triangular. 

This subject has an impressive history going a century back to the famous work of Szeg\H{o} on the asymptotic behavior of the determinant
of $T_{n}(b)$ for $n\to\infty$. The most essential progress was achieved by Schmidt and Spitzer \cite{schmidt_ms60} who proved that the
eigenvalue-counting measures 
\begin{equation}
 \mu_{n}:=\frac{1}{n}\sum_{k=1}^{n}\delta_{\lambda_{k,n}},
\label{eq:def_mu_n}
\end{equation}
where $\lambda_{1,n},\lambda_{2,n},\dots,\lambda_{n,n}$ are the eigenvalues of $T_{n}(b)$ (repeated according to their algebraic multiplicity) and $\delta_{a}$
the Dirac measure supported at $\{a\}$, converge weakly to a \emph{limiting measure} $\mu$, as $n\to\infty$. Recall that $\mu_{n}$ converges weakly to $\mu$, as $\to\infty$,
if and only if
\[
\lim_{n\to\infty}\int_{\C}f(\lambda)\dd \mu_{n}(\lambda)=\int_{\C}f(\lambda)\dd \mu(\lambda),
\]
for every bounded and continuous function $f$ on $\C$. The measure $\mu$ is supported on the \emph{limiting set}
\begin{equation}
 \Lambda(b):=\left\{\lambda\in\C \;\big|\; \lim_{n\to\infty}\dist(\lambda,\spec(T_{n}(b)))=0\right\}\!.
\label{eq:def_Lambda_b}
\end{equation}
The limit which appears above exists, see \cite{schmidt_ms60} or \cite[Chp.~11]{bottcher05}. Moreover, Schmidt and Spitzer derived another very useful
description of $\Lambda(b)$ which reads
\[
 \Lambda(b)=\left\{\lambda\in\C \mid |z_{r}(\lambda)|=|z_{r+1}(\lambda)|\right\}\!,
\]
where $z_{1}(\lambda),z_{2}(\lambda),\dots,z_{r+s}(\lambda)$ are (not necessarily distinct) roots of the polynomial $z\mapsto z^{r}(b(z)-\lambda)$ arranged in the 
order of increase of their absolute values, i.e.,
\[
 0<|z_{1}(\lambda)|\leq |z_{2}(\lambda)|\leq\dots\leq |z_{r+s}(\lambda)|.
\]
(If there is a chain of equalities above, then the labeling of roots within this chain is arbitrary.) 

The latter description allows to deduce the fundamental analytical and topological properties of $\Lambda(b)$. First, the set $\Lambda(b)$ equals the union of a 
finite number of pairwise disjoint open analytic arcs and a finite number of the so-called exceptional points. A point $\lambda\in\Lambda(b)$ is called \emph{exceptional}, 
if either $\lambda$ is a branch point, i.e., if the polynomial $z\mapsto z^{r}(b(z)-\lambda)$ has a multiple root, or if there is no open neighborhood $U$ of $\lambda$ such that
$\Lambda(b)\cap U$ is an analytic arc starting and terminating on the boundary $\partial U$. Second, $\Lambda(b)$ is a compact and connected set, see \cite{ullman_bams67}.
Let us also mention that $\mu$ is absolutely continuous with respect to the arclength measure in $\C$ and its density has been derived by Hirschmann \cite{hirschman_ijm67}. 

As a starting point, we focus on a characterization of Laurent polynomial symbols $b$ for which $\Lambda(b)\subset\R$. In this case, we say that $b$ {\it belongs to the class} 
$\mathscr{R}$; see Definition~\ref{def:class_R}. Besides all the symbols $b$ for which $T(b)$ is self-adjoint, the class $\mathscr{R}$ contains 
also symbols of certain non-self-adjoint matrices. Surprisingly, it turns out that, for $b\in\mathscr{R}$, the spectra of all submatrices $T_{n}(b)$ are real; see Theorem~\ref{thm:summary} below.

Describing the class $\mathscr{R}$ can be viewed as a problem of characterization of the subclass of banded Toeplitz matrices with asymptotically real spectrum. Let us point out that, in this 
context, the consideration of the set of the limit spectral points, i.e. $\Lambda(b)$, is a more relevant and interesting problem than the study of the spectrum of $T(b)$ realized as an 
operator acting on $\ell^{p}(\N)$, for some $1\leq p \leq \infty$. Indeed, note that $T(b)$ is self-adjoint if and only if $b(\mathbb{T})\subset\R$ where $\mathbb{T}$ is the unit circle. On the other hand, $b(\mathbb{T})$ 
coincides with the essential spectrum of $T(b)$, see \cite[Cor.~1.10]{bottcher05}. Thus, if $T(b)$ is non-self-adjoint, then there is no chance for $\spec(T(b))$ to be purely real.

From a broader perspective, results of the present paper contribute to the study of the spectral properties of non-self-adjoint operators; a rapidly developing area which has recently 
attracted attention of many mathematicians and physicists, see, e.g. \cite{davies07,helffer13,trefethen05}. Particularly noteworthy problem consists in finding classes of non-self-adjoint operators 
that have purely real spectrum. This is usually a mathematically challenging problem (see, for example, the proof of reality of the spectrum of the imaginary cubic oscillator 
in~\cite{shin_cmp02}) which, in addition, may be of physical relevance. At the moment, the non-self-adjoint operators whose spectrum is known to be real mainly comprise either very 
specific operators \cite{shin_cmp02,giordanelligraf_ahp15, weir_apl09, sieglstampach_inprep} or operators which are in a certain sense close to being self-adjoint 
\cite{langertretter_cjp04, calicetietal_jpa06, mityaginsiegl_lmp16}. In particular, there are almost no criteria for famous non-self-adjoint families (such as Toeplitz, Jacobi, Hankel, Schr\"{o}dinger, etc.) 
guaranteeing the reality of their spectra. From this point of view and to the best of our knowledge, the present article provides the first relevant results of such a flavor for the class of banded 
Toeplitz matrices.

Our main result is the following characterization of the class $\mathscr{R}$.
\begin{thm}\label{thm:summary}
	Let $b$ be a complex Laurent polynomial as in~\eqref{eq:def_b}. The following statements are equivalent:
	\begin{enumerate}[{\upshape i)}]
		\item $\Lambda(b)\subset\R$;
		\item the set $b^{-1}(\R)$ contains (an image of) a Jordan curve;
		\item for all $n\in\N$, $\spec(T_{n}(b))\subset\R$.
	\end{enumerate}
\end{thm}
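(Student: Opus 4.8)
The plan is to prove the cycle of implications iii) $\Rightarrow$ i) $\Rightarrow$ ii) $\Rightarrow$ iii). The first implication is immediate from the definition of the limiting set $\Lambda(b)$ in~\eqref{eq:def_Lambda_b}: if every $\spec(T_n(b))\subset\R$, then every point $\lambda$ with $\dist(\lambda,\spec(T_n(b)))\to0$ must be a limit of real numbers, hence real. So the real content is in the other two implications.

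For i) $\Rightarrow$ ii), I would use the Schmidt--Spitzer description $\Lambda(b)=\{\lambda\mid|z_r(\lambda)|=|z_{r+1}(\lambda)|\}$ together with the fact, recalled in the excerpt, that $\Lambda(b)$ is compact, connected, and is a finite union of open analytic arcs and exceptional points. The key is to exploit the \emph{winding-number}/argument structure of $b$ on circles $|z|=\rho$. For $\lambda\notin b(\{|z|=\rho\})$, let $N_\rho(\lambda)$ be the number of roots of $z\mapsto z^r(b(z)-\lambda)$ inside $|z|<\rho$; this equals $r$ plus the winding number of $b(\rho\,\cdot\,)-\lambda$ about $0$, and it jumps exactly when $\lambda$ crosses $b(\{|z|=\rho\})$. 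Standard Toeplitz theory (cf.\ \cite[Chp.~11]{bottcher05}) tells us there is a distinguished radius $\rho_0$, characterized by $|z_r(\lambda)|\le\rho_0\le|z_{r+1}(\lambda)|$ for $\lambda\in\Lambda(b)$, and the curve $b(\{|z|=\rho_0\})$ is, roughly, ``pinched'' onto $\Lambda(b)$. Assuming $\Lambda(b)\subset\R$, I would argue that the image $\gamma_\rho:=b(\{|z|=\rho\})$ for $\rho$ slightly away from the critical configuration is a closed curve whose trace lies very close to $\R$, and that one can choose $\rho$ so that $\gamma_\rho$ is an embedded (Jordan) curve: indeed, self-intersections of $b(\{|z|=\rho\})$ correspond to values $\lambda$ attained twice, and because $b$ is a Laurent polynomial of known degree such coincidences are confined to a discrete set of radii, so a generic $\rho$ gives a Jordan curve; and since all limiting spectral points are real while $\gamma_\rho$ must enclose the ``correct'' number of roots on each side, $\gamma_\rho$ is forced to hug the real axis, giving $b(\{|z|=\rho\})\subset$ a small real neighborhood — but to land \emph{exactly} in $b^{-1}(\R)$ I would instead take a limiting/continuity argument as $\rho\to\rho_0$, or directly show that the analytic arcs comprising $\Lambda(b)$, being real, lift under $b$ to arcs on which $b$ is real, and these lifted arcs patch together (using connectedness of $\Lambda(b)$ and the branch structure at exceptional points) into a single closed Jordan curve in $b^{-1}(\R)$. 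This gluing at branch points is where I expect the main technical obstacle: one must check that the two preimage branches meeting at an exceptional point join up smoothly (or at least continuously) and that the global object has no self-crossings and separates $0$ from $\infty$ in the right way.

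For ii) $\Rightarrow$ iii), suppose $b^{-1}(\R)$ contains a Jordan curve $\Gamma\subset\C\setminus\{0\}$. The idea is that $\Gamma$ separates the plane into an inside and an outside, with (say) $0$ inside. Parametrize $\Gamma$ and push forward; $b|_\Gamma$ is real-valued, so $b(\Gamma)$ is a compact real interval (or union of intervals), traversed back and forth. Now I would introduce the function $g(z)=b(z)$ but think of it as follows: conjugate $T_n(b)$ by a suitable diagonal matrix $D=\diag(1,d,d^2,\dots,d^{n-1})$, which replaces $a_k$ by $a_k d^{-k}$, i.e.\ replaces the symbol $b(z)$ by $b(dz)$. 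Choosing $|d|$ equal to the ``radius'' associated to $\Gamma$ — more precisely, choosing $d$ on a circle that the Jordan curve $\Gamma$ is (after a conformal adjustment) comparable to — does not by itself make the matrix self-adjoint because $\Gamma$ need not be a circle; the correct move is subtler. The cleaner route: the existence of a Jordan curve in $b^{-1}(\R)$ through which $b$ is real forces, via the argument principle, that for $\lambda\in\C\setminus\R$ the polynomial $z^r(b(z)-\lambda)$ has \emph{no root on} $\Gamma$ and moreover splits its roots as exactly $r$ inside $\Gamma$ and $s$ outside, \emph{uniformly} in such $\lambda$ (because the count can only change when $\lambda$ crosses $b(\Gamma)\subset\R$). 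This ``constant root-splitting'' is precisely the Widom/Schmidt--Spitzer condition under which $\det T_n(b-\lambda)$ can be written via a Wiener--Hopf/Cauchy-type factorization; in particular it implies $\lambda\notin\spec T_n(b)$ for all $n$, provided one knows the relevant nondegeneracy. I would make this rigorous by invoking the classical fact (again \cite{bottcher05}, the theory behind the Baxter--Schmidt--Spitzer formula for $\det T_n(b)$) that if $b-\lambda$ admits a \emph{Wiener--Hopf factorization with index $0$} relative to a closed curve not through $0$ — equivalently the winding number condition above holds — then $T_n(b-\lambda)$ is invertible for all $n$; reality of $\spec T_n(b)$ follows because every $\lambda\in\C\setminus\R$ is then a regular value. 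The subtle point to handle here is that the factorization is relative to the curve $\Gamma$ rather than the unit circle, so I would either first apply a change of variables $z\mapsto\varphi(z)$ mapping $\Gamma$ to $\mathbb T$ (but this destroys the Toeplitz/Laurent-polynomial structure) or, better, argue directly on the level of the determinant formula $\det T_n(b-\lambda)=\prod(\text{contributions of the }r\text{ small roots})$, valid precisely when the small/large root split is clean — this product is manifestly nonzero for $\lambda\notin b(\Gamma)$, in particular for all non-real $\lambda$.

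In summary: iii)$\Rightarrow$i) is trivial; i)$\Rightarrow$ii) rests on extracting a Jordan curve from the analytic-arc structure of $\Lambda(b)$ and lifting it through $b$, with the branch-point gluing being the main hurdle; and ii)$\Rightarrow$iii) rests on turning the separating Jordan curve in $b^{-1}(\R)$ into a uniform root-splitting statement for $z^r(b(z)-\lambda)$ and feeding that into the classical determinant/factorization machinery to conclude invertibility of every $T_n(b-\lambda)$ for non-real $\lambda$, the delicate part being that the factorization lives on $\Gamma$ rather than on $\mathbb T$. I would expect to state a standalone lemma (the ``constant winding number'' $\Leftrightarrow$ ``Jordan curve in $b^{-1}(\R)$'' dictionary) and prove the three implications through it, so that the self-adjoint case ($b(\mathbb T)\subset\R$) appears as the special instance $\Gamma=\mathbb T$.
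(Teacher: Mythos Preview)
Your plan for (iii)$\Rightarrow$(i) is fine and matches the paper. The other two implications, however, diverge from the paper's proofs, and for (ii)$\Rightarrow$(iii) your approach contains a real gap.

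\textbf{On (ii)$\Rightarrow$(iii).} The ``classical fact'' you invoke---that a Wiener--Hopf factorization of $b-\lambda$ with index $0$ (equivalently, a clean $r$/$s$ split of the roots of $z^{r}(b(z)-\lambda)$ across a curve) forces $T_{n}(b-\lambda)$ to be invertible \emph{for every} $n$---is not a theorem. Index-zero factorization gives invertibility of the \emph{infinite} operator $T(b-\lambda)$ and, via the finite section method, invertibility of $T_{n}(b-\lambda)$ for all \emph{sufficiently large} $n$; it says nothing about small $n$. Likewise, the Widom/Trench determinant formula for a Laurent-polynomial symbol is an alternating \emph{sum} over size-$s$ subsets of the root set, not the single product you describe, and nothing prevents cancellation at a given $n$. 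So the step ``root-splitting $\Rightarrow$ $\det T_n(b-\lambda)\neq0$ for all $n$'' is unjustified. The paper avoids this entirely: it proves a contour-integral identity (Lemma~\ref{lem:quadr_form_gamma}) expressing $\langle u,T_n(a)v\rangle$ as an integral over \emph{any} Jordan curve $\gamma$ enclosing $0$, then plays the unit circle against $\gamma$. For an eigenpair $(\mu,u)$ one gets $\mu=\langle f_u,af_u\rangle_{\mathbb T}=\overline{\langle f_u,\bar a f_u\rangle_{\mathbb T}}$, passes through $T_n(\bar a)$ to rewrite this as $\overline{\langle f_u,\bar a f_u\rangle_{\gamma}}$, and finally uses $a|_\gamma\in\R$ to conclude $\mu=\overline{\langle f_u,af_u\rangle_{\gamma}}=\overline{\langle u,T_n(a)u\rangle}=\bar\mu$. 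No factorization, no root-counting, no determinant formula---and it works for non-circular $\gamma$ precisely because $\oint_\gamma z^{j-1}\,dz=2\pi i\,\delta_{j,0}$ is curve-independent.

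\textbf{On (i)$\Rightarrow$(ii).} Your sketch (lift the analytic arcs of $\Lambda(b)$ through $b$ and glue at branch points, or push circles $|z|=\rho$ and take a limit) is plausible in spirit but, as you yourself flag, the gluing and the embeddedness of the resulting curve are not obvious. The paper's argument is shorter and sidesteps all of that: set $\mathcal N=\{z:|z_r(b(z))|=|z_{r+1}(b(z))|\}$ and observe $b(\mathcal N)\subset\Lambda(b)\subset\R$, hence $\mathcal N\subset b^{-1}(\R)$. A continuity/intermediate-value argument shows that every path in $S^2$ from $0$ to $\infty$ meets $\mathcal N$, hence meets $b^{-1}(\R)$; a preliminary lemma on the structure of the net $n_b$ then converts ``$b^{-1}(\R)$ separates $0$ from $\infty$'' into ``$b^{-1}(\R)$ contains a Jordan curve''. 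There is no lifting and no branch-point surgery.
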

It is a very peculiar feature of banded Toeplitz matrices that the asymptotic reality of the eigenvalues (claim (i)) forces all eigenvalues of all submatrices to be real (claim (iii)). Hence, if, for instance, the $2\times2$ matrix $T_{2}(b)$
has a non-real eigenvalue, there is no chance for the limiting set $\Lambda(b)$ to be real. The implication (iii)$\Rightarrow$(i) is clearly trivial. The implication (i)$\Rightarrow$(ii) is proven in 
Theorem~\ref{thm:main_neces}, while (ii)$\Rightarrow$(iii) is established in Theorem~\ref{thm:gamma_impl_real_spec} for even more general class of symbols. These results are worked out within the first 
subsection of Section~\ref{sec:main}.

The second subsection of Section~\ref{sec:main} primarily studies the density of the limiting measure $\mu$ for real Laurent polynomial symbols $b\in\mathscr{R}$ in more detail. 
Namely, we provide an explicit description of the density of $\mu$ in terms of the symbol $b$ and the Jordan curve present in $b^{-1}(\R)$ provided that the curve admits a polar 
parametrization. Second, we show that the limiting measure is a solution of a particular Hamburger moment problem whose moment sequence is determined by the symbol $b\in\mathscr{R}$. 
The positive definiteness of the moment Hankel matrix then provides a necessary (algebraic) condition for $b$ to belong to $\mathscr{R}$. Finally, we prove an integral formula for the determinant
of the moment Hankel matrix and discuss the possible sufficiency of the condition of positive definiteness of the Hankel moment matrices for $b$ to belong to $\mathscr{R}$.

Since, for $b\in\mathscr{R}$, the support of $\mu$ is real and bounded, there is a unique self-adjoint bounded Jacobi operator $J(b)$ whose spectral measure coincides with the limiting measure.
Some properties of $J(b)$ and the corresponding family of orthogonal polynomials are investigated in Section~\ref{sec:Jac_op}. Mainly, we prove that $J(b)$ is a compact perturbation of a Jacobi 
operator with constant diagonal and off-diagonal sequences. Also the Weyl $m$-function of $J(b)$ is expressed in terms of $b$.

Section~\ref{sec:ex_numer} provides several concrete examples and numerical computations illustrating the results of Sections~\ref{sec:main} and~\ref{sec:Jac_op}. 
For  general $3$-diagonal and 4-diagonal Toeplitz matrices $T(b)$, explicit conditions in terms of $b$ guaranteeing that $b\in\mathscr{R}$  are 
deduced. Further, for the $3$-diagonal and slightly specialized 4-diagonal Toeplitz matrices, the densities of the limiting measures as well as the associated Jacobi 
matrices are obtained fully explicitly. Moreover, we show that the corresponding orthogonal polynomials are related to certain well-known families of orthogonal polynomials, which 
involve Chebyshev polynomials of the second kind and  the associated Jacobi polynomials.   Their properties such as  the three-term recurrence, the orthogonality relation,
and an explicit representation are given. Yet for another interesting example  generalizing the previous two, we are able  to obtain  some partial results. 
These examples are presented in Subsections~\ref{subsec:ex1}, \ref{subsec:ex2}, and~\ref{subsec:ex3}. Finally, the last part of the paper contains various numerical illustrations and plots of the densities of the limiting measures and the distributions of eigenvalues in the situations
whose complexity does not allow us to treat them explicitly.

\section{Main results}\label{sec:main}

\begin{defn}\label{def:class_R}
 Laurent polynomial $b$ of the form~\eqref{eq:def_b} is said to {\it belong to the class $\mathscr{R}$}, denoted by $b\in\mathscr{R}$, if and only if $\Lambda(b)\subset\R$,
 where $\Lambda(b)$ is given by~\eqref{eq:def_Lambda_b}.
\end{defn}

\begin{rem}
 Since $\Lambda(b)$ is a compact connected set, the inclusion $\Lambda(b)\subset\R$ actually implies that $\Lambda(b)$ is a closed finite interval.
\end{rem}

\subsection{Proof of Theorem~\ref{thm:summary}}

We start with the proof of the implication (i)$\Rightarrow$(ii) from Theorem~\ref{thm:summary}. For this purpose, we take a closer look at the structure of the set $b^{-1}(\R)$ 
where the symbol is real-valued. Let us stress that $b^{-1}(\R)\subset\C\setminus\{0\}$. Recall also that a \emph{curve} is a continuous mapping from a closed interval to a topological space
and a \emph{Jordan curve} is a curve which is, in addition, simple and closed. Occasionally, we will slightly abuse the term \emph{curve} sometimes meaning the mapping and sometimes the image of 
such a mapping.

Clearly, $z\in b^{-1}(\R)$ if and only if $\Im b(z)=0$ and the latter condition can be turned into a polynomial equation $P(x,y)=0$ where $P\in\R[x,y]$, $x=\Re z$, and $y=\Im z$. 
Consequently, $b^{-1}(\R)$  is a finite union of pairwise disjoint open analytic arcs, i.e., images of analytic mappings from open intervals to $\C$, and a finite number of branching 
points which are the critical points of $b$.

It might be convenient to add the points $0$ and $\infty$ to $b^{-1}(\R)$ and introduce the set 
\[
n_{b}:=b^{-1}(\R)\cup\{0,\infty\}
\]
endowed with the topology induced from the Riemann sphere $S^{2}$. Observe that the set $n_{b}$ coincides with the so-called {\it net of the rational function} $b$, 
cf. \cite{eremenkogabrielov_am02}. Recall that $n_{b}$ is a closed subset of $S^{2}$. In addition, $n_{b}$ contains neither isolated points nor curves with end-points
(i.e., $z\in n_{b}$ such that there is no open neighborhood $U$ of $z$ such that $n_{b}\cap U$ is an analytic arc starting and terminating on $\partial U$),
as one verifies by using general principles (the Mean-Value Property for harmonic functions and the Open Mapping Theorem).

In total, $n_{b}$ is a finite union of Jordan curves in $S^{2}$ of which at most one is entirely located in $\C\setminus\{0\}$. Indeed, assuming the opposite of the latter claim, one can 
always find a region (an open connected set) in $\C\setminus\{0\}$ such that $b$ has real values on its boundary. Recall that, if $f$ is a function analytic in a region 
$\Omega\subset\C$, continuous up to the boundary $\partial\Omega$, and real on the boundary, then $f$ is a (real) constant on $\Omega$, as one deduces by using the 
Maximum Modulus Principle and the Open Mapping Theorem. Thus one concludes that under this assumption there exists a non-empty region on which $b$ is a constant function, 
a contradiction. Note that, if the Jordan curve in $b^{-1}(\R)$ is present, it contains $0$ in its interior.

Notice also that, since $b(z)\sim a_{-r}/z^{r}$, as $z\to0$, $n_{b}$ looks locally at $0$ as a star graph with $2r$ edges and the angle between two consecutive
edges is $\pi/r$. Similarly, since $b(z)\sim a_{s}z^{s}$, as $z\to\infty$, $n_{b}$ looks locally at $\infty$ as a star graph with $2s$ edges with equal angles
of magnitude $\pi/s$. As an illustration of the two situations which may occur for the net of $b$, see Figure~\ref{fig:net_illust_yes_no}.

With the above information about the structure of the net $n_{b}$, the verification of the following lemma is immediate.

\begin{lem}\label{lem:path0inf_Jordan}
 The set $b^{-1}(\R)$ contains a Jordan curve if and only if every path in $S^{2}$ connecting $0$ and $\infty$ has a non-empty intersection with $b^{-1}(\R)$.
\end{lem}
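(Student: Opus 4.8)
The plan is to prove Lemma~\ref{lem:path0inf_Jordan} as a reformulation of the structural facts about the net $n_b$ established just above in the text, essentially a topological separation argument on the Riemann sphere $S^2$.

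First I would set up the correct framework: work on $S^2$ with the set $n_b = b^{-1}(\R)\cup\{0,\infty\}$, which by the preceding discussion is a closed subset of $S^2$ consisting of finitely many Jordan curves, at most one of which lies entirely in $\C\setminus\{0\}$. The statement ``$b^{-1}(\R)$ contains a Jordan curve'' I would interpret, consistently with that discussion, as ``$b^{-1}(\R)$ contains a Jordan curve avoiding $0$ and $\infty$'', i.e.\ one lying in $\C\setminus\{0\}$ — since the net always contains Jordan curves through $0$ and $\infty$ (the star-graph pictures), the nontrivial content is exactly the curve around $0$.

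For the forward direction ($\Rightarrow$), suppose $\gamma\subset b^{-1}(\R)$ is a Jordan curve in $\C\setminus\{0\}$. By the Jordan curve theorem on $S^2$, $\gamma$ separates $S^2$ into two components; as noted in the text, $0$ lies in the bounded interior of $\gamma$, and $\infty$ lies in the other component (the exterior). Hence any path $\sigma$ in $S^2$ from $0$ to $\infty$ must cross $\gamma$, so $\sigma\cap b^{-1}(\R)\supset\sigma\cap\gamma\neq\emptyset$. For the converse ($\Leftarrow$), I would argue contrapositively: assume $b^{-1}(\R)$ contains no Jordan curve lying in $\C\setminus\{0\}$. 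Then, by the structural description, the net $n_b$ is a finite union of Jordan curves each of which passes through $0$ or through $\infty$. The union of these finitely many arcs emanating from $\{0,\infty\}$ is a closed set whose complement in $S^2$ is open; I want to produce a path from $0$ to $\infty$ inside $S^2\setminus b^{-1}(\R)$, equivalently inside the open set $S^2\setminus n_b$ together with its two marked endpoints. Concretely: since near $0$ the net looks like a star with $2r$ edges (angular sectors of opening $\pi/r$), pick a point $p$ in the interior of one such sector near $0$; similarly pick $q$ in an open sector near $\infty$; then use that $S^2\setminus n_b$ is a finite disjoint union of open topological disks (the faces of the net graph), and trace a path $0\to p\to\cdots\to q\to\infty$ staying, except at the two endpoints, in the interior of a single face — this is possible precisely because, with no separating Jordan curve around $0$, the face containing a sector at $0$ also has $\infty$ on part of its boundary, or more carefully, the faces can be chained together without any of them being enclosed by a loop avoiding $\{0,\infty\}$. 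The cleanest way to phrase this is: if no such loop exists, the dual adjacency graph of the faces, with $0$ and $\infty$ regarded as accessible boundary points, is connected in the required way, yielding the path.

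The main obstacle will be the converse direction, specifically making the hand-wavy ``chain of faces'' argument rigorous without a long excursion into planar graph topology. I expect the slick fix is to avoid building the path directly and instead argue by separation: if \emph{every} $0$--$\infty$ path met $b^{-1}(\R)$, then $n_b$ would separate $0$ from $\infty$ in $S^2$, and then — since $n_b$ is a finite graph embedded in $S^2$ (a $1$-complex) — a minimal separating subcomplex must be a cycle, i.e.\ a Jordan curve, and that cycle cannot pass through $0$ or $\infty$ (a graph cycle through a vertex of degree $2r$ uses only two of its edges, but one checks that removing such a cycle through $0$ does not separate, because the remaining star-edges at $0$ reconnect the two sides locally — here one uses that near $0$ the complement of any two opposite-ish edges is still connected through the other $2r-2$ edges). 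Hence the separating cycle lies in $\C\setminus\{0,\infty\}$, giving the desired Jordan curve in $b^{-1}(\R)$. I would present the proof in this separation form, citing the Jordan curve theorem on $S^2$ and the already-established local structure of $n_b$ at $0$ and $\infty$, and keeping the verification that cycles through $0$ or $\infty$ are non-separating as the one small computation to spell out.
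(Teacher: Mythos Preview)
Your approach is the paper's approach --- indeed, the paper gives no proof at all beyond declaring the lemma ``immediate'' from the structural description of $n_b$, and what you have written is a reasonable fleshing-out of that. The forward direction via the Jordan curve theorem is exactly right.

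For the converse, your overall strategy (a separation argument on the finite graph $n_b$) is correct, but the specific justification you offer for why a separating cycle cannot pass through $0$ is muddled: the ``remaining $2r-2$ star-edges at $0$'' lie in $b^{-1}(\R)$, so they cannot be used to reconnect anything in the \emph{complement} of $b^{-1}(\R)$. The cleaner observation is simply that you are looking for a Jordan curve inside $b^{-1}(\R)$, and since $0,\infty\notin b^{-1}(\R)$ any such curve automatically avoids them; the real content is therefore just that a $1$-complex in $S^2$ which separates two points of its complement must contain a cycle. Equivalently: if $b^{-1}(\R)$ contains no Jordan curve then, as a subspace, it is acyclic (a forest, with some half-open edges tending to $0$ or $\infty$), and a forest does not disconnect $S^{2}$. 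If you want to avoid the non-compactness of $b^{-1}(\R)$, the face-coloring variant is tidiest: if no face of $n_b$ has both $0$ and $\infty$ on its boundary, then the union of closures of faces incident to $0$ is a closed set containing $0$ in its interior and not containing $\infty$, and its topological boundary is a subgraph of $n_b$ avoiding both $0$ and $\infty$ which separates $S^2$ --- hence contains the desired cycle in $b^{-1}(\R)$.
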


Now, we are ready to show that the existence of a Jordan curve in $b^{-1}(\R)$ is necessary for $\Lambda(b)\subset\mathbb{R}$, i.e., the implication (i)$\Rightarrow$(ii) from Theorem~\ref{thm:summary}.

\begin{thm}\label{thm:main_neces}
 If $\Lambda(b)\subset\mathbb{R}$, then $b^{-1}(\R)$ contains a Jordan curve.
\end{thm}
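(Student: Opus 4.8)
The plan is to prove the contrapositive: if $b^{-1}(\R)$ contains no Jordan curve, then $\Lambda(b)\not\subset\R$. By Lemma~\ref{lem:path0inf_Jordan}, the absence of a Jordan curve in $b^{-1}(\R)$ is equivalent to the existence of a path $\gamma$ in $S^2$ joining $0$ to $\infty$ that avoids $b^{-1}(\R)$ entirely, i.e. along which $\Im b$ never vanishes (except of course at the two endpoints, where $b$ is $\infty$ or $0$... one must be slightly careful, but near $0$ and $\infty$ one can arrange $\Im b\neq 0$ by the local star-graph picture). Thus I may assume there is a continuous arc $z=\gamma(t)$, $t\in(0,1)$, in $\C\setminus\{0\}$ with $\gamma(t)\to 0$ as $t\to 0^+$, $\gamma(t)\to\infty$ as $t\to 1^-$, and $\Im b(\gamma(t))\neq 0$ for all $t$; without loss of generality $\Im b(\gamma(t))>0$ throughout.

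The key idea is to feed such a path into the Schmidt–Spitzer description $\Lambda(b)=\{\lambda : |z_r(\lambda)|=|z_{r+1}(\lambda)|\}$, or rather into the companion fact that $\lambda\notin\Lambda(b)$ (and moreover lies in the complement of the filled-in picture) precisely when the polynomial $z^r(b(z)-\lambda)$ has no root on the critical circle; more usefully, I want to produce a non-real $\lambda$ that IS forced to lie in $\Lambda(b)$. The cleanest route: consider the winding-number / root-counting function. For $\lambda\in\C$, let $N(\lambda)$ be the number of roots $z$ (with multiplicity) of $b(z)=\lambda$ lying inside a circle $|z|=\rho$; equivalently the winding number of $b$ restricted to $|z|=\rho$ about $\lambda$. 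The path $\gamma$ separates... no — rather, the complement $S^2\setminus\gamma$ is connected, so $b(\gamma)$ is a curve in $\{\Im>0\}\cup\{0,\infty\}$, and one shows that for $\lambda$ ranging over a suitable non-real value, the count of roots inside $|z|=\rho$ jumps as $\rho$ crosses $|\gamma(t)|$-values, which forces $|z_r(\lambda)|=|z_{r+1}(\lambda)|$ for some non-real $\lambda$. Concretely: pick $\rho>0$ and look at $b$ on $\{|z|=\rho\}$; this circle meets $\gamma$ in at least one point $z_0$ with $\Im b(z_0)>0$, so the curve $b(\{|z|=\rho\})$ passes through the open upper half-plane. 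As $\rho$ runs from $0^+$ (where $b\approx a_{-r}z^{-r}$ winds $-r$ times around large $\lambda$, i.e. $N(\lambda)=0$ for $\lambda$ outside a large disk... actually $N=0$) to $\infty$ (where $b\approx a_s z^s$, winding $+s$ times, $N(\lambda)=r+s$), the function $\rho\mapsto N(\lambda)$ increases from $0$ to $r+s$ for every fixed $\lambda$ in the unbounded component. The jumps happen exactly when $|z|=\rho$ hits a root of $b(z)=\lambda$; so for such $\lambda$ there is $\rho$ with a root of modulus exactly $\rho$ and roots of both strictly smaller and (weakly) this modulus — one arranges $|z_r(\lambda)|=|z_{r+1}(\lambda)|$, i.e. $\lambda\in\Lambda(b)$. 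The point of $\gamma$ is that it lets me choose $\lambda$ in the upper half-plane: since $b(\gamma)$ is a connected curve in $\overline{\{\Im\ge 0\}}$ from $0$ to $\infty$ staying in the open upper half-plane in between, I can pick $\lambda=b(\gamma(t_*))$ with $\Im\lambda>0$; then $z_*=\gamma(t_*)$ is a root of $b(z)=\lambda$, and by running $\rho$ through $|z_*|$ and invoking the monotonicity of $N$ together with the fact that $N$ must pass through both values $\le$ and $\ge$ the rank of $z_*$ among the roots, I conclude $|z_r(\lambda)|=|z_{r+1}(\lambda)|$, hence $\lambda\in\Lambda(b)\setminus\R$.

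I expect the main obstacle to be the bookkeeping in the last step: ensuring that the root $z_*$ of $b(z)=\lambda$ that I produced really sits at the $r$/$r{+}1$ interface of moduli, rather than, say, deep among the small roots or among the large ones, so that $|z_r(\lambda)|=|z_{r+1}(\lambda)|$ genuinely holds. The honest way to handle this is again via the winding-number function: define $\nu(\rho,\lambda)$ = number of roots of $z^r(b(z)-\lambda)$ with $|z|<\rho$, a left-continuous nondecreasing step function of $\rho$ with $\nu(0^+,\lambda)=0$ (the single root $z=0$ of multiplicity $r$ contributes once $\rho>0$ — one must keep track of this $z^r$ factor, which is exactly where the normalization in Schmidt–Spitzer comes from) and $\nu(\infty,\lambda)=r+s$; then $\lambda\in\Lambda(b)$ iff $\nu(\cdot,\lambda)$ is discontinuous at $\rho=|z_r(\lambda)|$ with a jump landing across the value $r$, equivalently iff there is $\rho$ with $\nu(\rho,\lambda)\le r<\nu(\rho^+,\lambda)$ having a root exactly on $|z|=\rho$. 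I then argue that because $\gamma$ crosses every circle $|z|=\rho$ for $\rho$ in an interval $(0,\infty)$ while keeping $\Im b\neq 0$, I can select $\rho_*$ and a point $z_*\in\gamma$ with $|z_*|=\rho_*$ realizing the critical crossing, and set $\lambda:=b(z_*)\notin\R$. An alternative, possibly cleaner, framing is to use that $\Lambda(b)$ together with its "interior" (the points enclosed, where the root count differs from the unbounded region) is a full, connected set whose complement is connected and contains a neighborhood of $\infty$; the path $b\circ\gamma$ would have to stay in this complement if $\Lambda(b)\subset\R$, but $b\circ\gamma$ is forced into the upper half-plane — contradiction once one checks the complement of $\Lambda(b)\cup(\text{interior})$ cannot contain a connected curve joining the two "sides" while $b$-preimages force it to. I would develop whichever of these two is shortest, but in either case the crux is the careful interplay between the modulus filtration of the roots and the geometry of the net, so I would budget most of the write-up for that.
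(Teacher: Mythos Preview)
Your contrapositive framing and the paper's direct argument are logically equivalent: both reduce to showing that \emph{every} path in $S^2$ from $0$ to $\infty$ contains a point $z_0$ with $b(z_0)\in\Lambda(b)$. The paper then uses $\Lambda(b)\subset\R$ to conclude $z_0\in b^{-1}(\R)$ and invokes Lemma~\ref{lem:path0inf_Jordan}; you instead take a $\gamma$ avoiding $b^{-1}(\R)$ and read off $b(z_0)\in\Lambda(b)\setminus\R$. Same content, dual packaging.

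Where you flag the obstacle---placing $z_*$ at the $r/(r{+}1)$ interface of root moduli---the paper has a one-line resolution that you are circling but never quite land on. The point is that $\gamma(t)$ is itself one of the roots of $b(z)=b(\gamma(t))$, so $|\gamma(t)|$ always appears somewhere in the ordered list $|z_1(b(\gamma(t)))|\le\cdots\le|z_{r+s}(b(\gamma(t)))|$. For $t$ near $0$ it sits among the first $r$ entries (the roots clustering at $0$); for $t$ near $1$, among the last $s$. Since each $t\mapsto|z_k(b(\gamma(t)))|$ is continuous, there must be a $t_0$ at which the rank of $|\gamma(t_0)|$ crosses from $\le r$ to $\ge r{+}1$, and at that instant $|\gamma(t_0)|=|z_r(b(\gamma(t_0)))|=|z_{r+1}(b(\gamma(t_0)))|$, i.e.\ $b(\gamma(t_0))\in\Lambda(b)$.

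Your winding-number route via $\nu(\rho,\lambda)$ for \emph{fixed} $\lambda$ and varying $\rho$ does not accomplish this: for a generic $\lambda$ the step function $\nu(\cdot,\lambda)$ actually \emph{attains} the value $r$ on an open interval of radii, and nothing in that picture forces $|z_r(\lambda)|=|z_{r+1}(\lambda)|$. The essential move is to vary $t$ along $\gamma$---so that $\lambda=b(\gamma(t))$ and the distinguished root $\gamma(t)$ move together---rather than to fix $\lambda$ and scan radii. Once you make that switch, your proposal collapses into the paper's proof.
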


\begin{proof}
 Set
 \[
  \mathcal{N}:=\left\{z\in\C \mid |z_{r}(b(z))|=|z_{r+1}(b(z))|\right\}\!.
 \]
 Note that $b$ maps a neighborhood of $0$ and a neighborhood of $\infty$ onto a neighborhood of $\infty$ and that
 for $\lambda$ large in the modulus, the preimages $z_{1}(\lambda),\dots,z_{r}(\lambda)$ are close to $0$, while 
 $z_{r+1}(\lambda),\dots,z_{r+s}(\lambda)$ are close to $\infty$. Hence there exist neighborhoods of $0$ and $\infty$ with
 empty intersections with $\mathcal{N}$.
 
 Next, let us show that any path in $S^{2}$ connecting $0$ and $\infty$ has a non-empty intersection with $\mathcal{N}$. Let $\gamma:[0,1]\to S^{2}$ denote such a path. 
 First, note that, for any $k\in\{1,\dots,r+s\}$, the function $\lambda\mapsto |z_{k}(\lambda)|$
 is continuous on $\C\setminus\{0\}$ and that the value $|\gamma(t)|$ appears at least once in the $(r+s)$-tuple $|z_{1}(b(\gamma(t)))|,\dots,|z_{r+s}(b(\gamma(t)))|$
 for all $t\in(0,1)$. Further, since $\gamma(0)=0$, the value $|\gamma(t)|$, for $t$ in a right neighborhood of $0$, appears among the first $r$ values 
 $|z_{1}(b(\gamma(t)))|,\dots,|z_{r}(b(\gamma(t)))|$ and does not appear in the remaining $s$ values $|z_{r+1}(b(\gamma(t)))|,\dots,|z_{r+s}(b(\gamma(t)))|$. 
 Similarly, since $\gamma(1)=\infty$, the value $|\gamma(t)|$, for $t$ in a left neighborhood of $1$, 
 appears in $|z_{r+1}(b(\gamma(t)))|,\dots,|z_{r+s}(b(\gamma(t)))|$ and is not among $|z_{1}(b(\gamma(t)))|,\dots,|z_{r}(b(\gamma(t)))|$. Since  $\gamma$ is a continuous map there must exist $t_{0}\in(0,1)$ 
 such that $|\gamma(t_{0})|=|z_{r}(b(\gamma(t_{0})))|=|z_{r+1}(b(\gamma(t_{0})))|$. Hence $\gamma((0,1))\cap\mathcal{N}\neq\emptyset$.
 
 Finally, recalling \eqref{eq:def_Lambda_b}, one has $b(\mathcal{N})\subset\Lambda(b)\subset \R$, hence $\mathcal{N}\subset b^{-1}(\R)$. Consequently, we obtain that any path in $S^{2}$
 joining $0$ and $\infty$ has a non-empty intersection with $b^{-1}(\R)$ and Theorem~\ref{thm:main_neces} follows from Lemma~\ref{lem:path0inf_Jordan}.
\end{proof}

The second part of this subsection is devoted to the proof of the implication (ii)$\Rightarrow$(iii) of Theorem~\ref{thm:summary}. Recall that every Jordan curve in $\C$ is homeomorphic to the unit circle
$\mathbb{T}:=\{z\in\C\mid |z|=1\}$ and therefore it gives rise to a homeomorphic mapping $\gamma:\mathbb{T}\to\C$. In addition, the unit circle is naturally parametrized by the polar angle $ t\mapsto e^{\ii t}$,
where $t\in[-\pi,\pi]$. In the following, we always choose the parametrization of a Jordan curve $\gamma=\gamma(t)$, with $t\in[-\pi,\pi]$, which can be viewed as the composition of the above two mappings.

Note that the mapping $z\to1/\overline{z}$ reflects the points in $\C\setminus\{0\}$ with respect to $\mathbb{T}$. For a given Jordan curve $\gamma$ with $0$ in its interior, we define a new Jordan curve
$\gamma^{*}$ by reflecting $\gamma$ with respect to $\mathbb{T}$, i.e., $\gamma^{*}:=1/\overline{\gamma}$. In other words, if $\gamma$ is parametrized as
\[
 \gamma(t)=\rho(t)e^{\ii\phi(t)},
\]
with a positive function $\rho$ and real function $\phi$, then
\[
 \gamma^{*}(t)=\frac{1}{\rho(t)}e^{\ii\phi(t)}.
\]
We also introduce the notation
\[
 r_{\gamma}:=\min_{t\in[-\pi,\pi]}|\gamma(t)| \; \mbox{ and } \; R_{\gamma}:=\max_{t\in[-\pi,\pi]}|\gamma(t)|.
\]

In the following lemma, $\langle\cdot,\cdot\rangle$ stands for the standard inner product on $\C^{n}$; further, 
to any vector $u=(u_{0},u_{1},\dots,u_{n-1})\in\C^{n}$, we associate the polynomial $f_{u}(z):=u_{0}+u_{1}z+\dots+u_{n-1}z^{n-1}$.

\begin{lem}\label{lem:quadr_form_gamma}
  Let $\gamma$ be a Jordan curve with $0$ in its interior and $a$ be a function given by the Laurent series 
  \[
   a(z)=\sum_{k=-\infty}^{\infty}a_{k}z^{k}
  \]
  which is absolutely convergent in the annulus $r_{\gamma}\leq |z| \leq R_{\gamma}$.
  Then, for all $u,v\in\C^{n}$ and $n\in\N$, one has
 \[
  \langle u, T_{n}(a)v\rangle=\langle f_{u},af_{v}\rangle_{\gamma}
  :=\frac{1}{2\pi\ii}\int_{-\pi}^{\pi}a(\gamma(t))f_{v}(\gamma(t))\overline{f_{u}(\gamma^{*}(t))}\frac{\dot{\gamma}(t)}{\gamma(t)}\dd t.
 \]
\end{lem}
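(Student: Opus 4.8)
The plan is to compute the right-hand side by expanding everything into the Laurent/power series and integrating term by term, using the key fact that $\gamma$ is homotopic in $\C\setminus\{0\}$ to the positively oriented unit circle, so that a contour integral of a Laurent series against $\dd\gamma/\gamma$ picks out only the residue-type constant term. First I would write $f_u(z)=\sum_{i=0}^{n-1}u_i z^i$ and $f_v(z)=\sum_{j=0}^{n-1}v_j z^j$, and observe that $\overline{f_u(\gamma^*(t))}=\overline{\sum_i u_i \rho(t)^{-i}e^{\ii i\phi(t)}}=\sum_i \overline{u_i}\,\rho(t)^{-i}e^{-\ii i\phi(t)}=\sum_i \overline{u_i}\,\overline{\gamma(t)}^{\,-i}\cdot$ — more cleanly, since $\gamma^*=1/\overline\gamma$, we have $\overline{\gamma^*(t)}=1/\gamma(t)$, hence $\overline{f_u(\gamma^*(t))}=\sum_{i=0}^{n-1}\overline{u_i}\,\gamma(t)^{-i}$. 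This is the crucial simplification: the ``reflected'' argument $\gamma^*$ is exactly what is needed so that $\overline{f_u(\gamma^*(t))}$ becomes the honest Laurent polynomial $\sum_i \overline{u_i}z^{-i}$ evaluated at $z=\gamma(t)$, with no complex conjugation left on the variable.

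Next I would substitute the series for $a(\gamma(t))$ as well and combine: the integrand becomes
\[
\frac{1}{2\pi\ii}\sum_{k\in\Z}\sum_{i=0}^{n-1}\sum_{j=0}^{n-1} a_k\,\overline{u_i}\,v_j\,\gamma(t)^{k+j-i-1}\dot\gamma(t).
\]
Absolute convergence of the Laurent series of $a$ on the closed annulus $r_\gamma\le|z|\le R_\gamma$, together with the boundedness of $\dot\gamma/\gamma$ (parametrizing a rectifiable, or here even analytic, Jordan curve) and the finiteness of the $i,j$ sums, justifies interchanging the integral with all three sums by dominated convergence. Then $\frac{1}{2\pi\ii}\int_{-\pi}^{\pi}\gamma(t)^{m}\dot\gamma(t)\,\dd t=\frac{1}{2\pi\ii}\oint_\gamma z^m\,\dd z$, which equals $1$ if $m=-1$ and $0$ otherwise, since $z\mapsto z^m$ is holomorphic on $\C\setminus\{0\}$ for $m\neq -1$ with a primitive there, and $\gamma$ winds once around $0$ (as $0$ lies in its interior, and $\gamma$ inherits the positive orientation of $\mathbb T$ through the chosen parametrization). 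Taking $m=k+j-i-1$, only the terms with $k=i-j$ survive, leaving $\sum_{i,j}\overline{u_i}\,a_{i-j}\,v_j=\sum_{i,j}\overline{u_i}\,(T_n(a))_{i,j}\,v_j=\langle u,T_n(a)v\rangle$, which is exactly the claim.

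The main obstacle is the orientation/winding-number bookkeeping: one must be sure that the chosen parametrization $\gamma(t)=\rho(t)e^{\ii\phi(t)}$, obtained as the composition of $t\mapsto e^{\ii t}$ with the homeomorphism $\mathbb T\to\C$, traverses $\gamma$ once counterclockwise around the origin, so that $\frac{1}{2\pi\ii}\oint_\gamma \dd z/z=1$ and not $-1$; equivalently, that $\phi$ increases by $2\pi$ over $[-\pi,\pi]$. This is where the hypothesis ``$0$ in the interior of $\gamma$'' is used, via the argument principle / winding number, and it is also what pins down the sign conventions so that no stray factor of $-1$ appears. A secondary, purely technical point is the justification of the term-by-term integration, which I would dispatch by noting $\sum_k |a_k|\max(r_\gamma^{-k},R_\gamma^{-k})<\infty$ on the annulus gives a uniform integrable majorant once the finite $i,j$ sums are fixed. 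Everything else is routine.
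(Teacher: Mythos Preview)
Your proposal is correct and follows essentially the same route as the paper: expand $a$, $f_v$, and $\overline{f_u\circ\gamma^*}$ into series, use that $\overline{\gamma^*(t)}=1/\gamma(t)$ so that $\overline{f_u(\gamma^*(t))}=\sum_i\overline{u_i}\gamma(t)^{-i}$, integrate term by term, and invoke $\frac{1}{2\pi\ii}\oint_\gamma z^{j-1}\,\dd z=\delta_{j,0}$ to collapse the triple sum to $\langle u,T_n(a)v\rangle$. The only difference is that you spell out the orientation/winding-number issue and the dominated-convergence justification, whereas the paper's proof treats both as routine and omits them.
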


\begin{proof}
 The proof proceeds by a straightforward computation. We have
 \begin{align*}
  \langle f_{u},af_{v}\rangle_{\gamma}&=\frac{1}{2\pi\ii}\int_{-\pi}^{\pi}a(\gamma(t))f_{v}(\gamma(t))\overline{f_{u}(\gamma^{*}(t))}\frac{\dot{\gamma}(t)}{\gamma(t)}\dd t\\
  &=\frac{1}{2\pi\ii}\sum_{k=-\infty}^{\infty}\sum_{l=0}^{n-1}\sum_{m=0}^{n-1}a_{k}v_{l}\overline{u_{m}}\int_{-\pi}^{\pi}\left(\gamma(t)\right)^{k+l-m-1}\dot{\gamma}(t)\dd t\\
  &=\sum_{l=0}^{n-1}\sum_{m=0}^{n-1}a_{m-l}v_{l}\overline{u_{m}}=\langle u, T_{n}(a)v\rangle,
 \end{align*}
 where we have used that
 \[
  \frac{1}{2\pi\ii}\int_{-\pi}^{\pi}\left(\gamma(t)\right)^{j-1}\dot{\gamma}(t)\dd t=\frac{1}{2\pi\ii}\oint_{\gamma}z^{j-1}\dd z=\delta_{j,0},
 \]
 for any $j\in\Z$.
\end{proof}

\begin{cor}\label{cor:quadr_form_unit_circ}
  For any element $a$ of the Wiener algebra, $u,v\in\C^{n}$, and $n\in\N$, one has
  \[
  \langle u, T_{n}(a)v\rangle=\langle f_{u},af_{v}\rangle_{\mathbb{T}}:=
  \frac{1}{2\pi}\int_{-\pi}^{\pi}a\left(e^{\ii t}\right)f_{v}\left(e^{\ii t}\right)\overline{f_{u}\left(e^{\ii t}\right)}\dd t.
 \]
\end{cor}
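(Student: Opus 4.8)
The plan is to obtain Corollary~\ref{cor:quadr_form_unit_circ} as the special case of Lemma~\ref{lem:quadr_form_gamma} in which the Jordan curve is the unit circle itself, equipped with its standard parametrization $\gamma(t)=e^{\ii t}$, $t\in[-\pi,\pi]$. The point $0$ lies in the interior of $\mathbb{T}$, so this $\gamma$ is an admissible Jordan curve. Moreover $r_{\gamma}=R_{\gamma}=1$, so the annulus $r_{\gamma}\leq|z|\leq R_{\gamma}$ in Lemma~\ref{lem:quadr_form_gamma} degenerates to $\mathbb{T}$, and absolute convergence of the Laurent series of $a$ on that annulus is precisely the defining property of the Wiener algebra. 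Hence the hypotheses of Lemma~\ref{lem:quadr_form_gamma} are met for every $a$ in the Wiener algebra.

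It then remains only to simplify the weighted inner product $\langle\cdot,\cdot\rangle_{\gamma}$ for this particular $\gamma$. Since $|\gamma(t)|=1$, the reflected curve satisfies $\gamma^{*}(t)=1/\overline{\gamma(t)}=e^{\ii t}=\gamma(t)$, so $f_{u}$ is evaluated on $\mathbb{T}$ as well; and $\dot{\gamma}(t)/\gamma(t)=\ii e^{\ii t}/e^{\ii t}=\ii$. Substituting these two facts into the integral formula of Lemma~\ref{lem:quadr_form_gamma} lets the factor $\dot{\gamma}(t)/\gamma(t)=\ii$ cancel the $1/\ii$ in the prefactor, giving exactly
\[
\langle u,T_{n}(a)v\rangle=\frac{1}{2\pi}\int_{-\pi}^{\pi}a\left(e^{\ii t}\right)f_{v}\left(e^{\ii t}\right)\overline{f_{u}\left(e^{\ii t}\right)}\,\dd t,
\]
which is the claimed identity.

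I do not expect a genuine obstacle here; the only mildly delicate point is that Lemma~\ref{lem:quadr_form_gamma} is invoked in the borderline case where its annulus collapses to a single circle. If one prefers to avoid this, one can reprove the identity directly by imitating the computation in the proof of Lemma~\ref{lem:quadr_form_gamma}: expand $a$, $f_{u}$, and $f_{v}$ into their series on $\mathbb{T}$, interchange summation with integration (legitimate since $\sum_{k}|a_{k}|<\infty$ for $a$ in the Wiener algebra and $f_{u},f_{v}$ are polynomials, hence bounded on $\mathbb{T}$), and use the orthogonality relation $\frac{1}{2\pi}\int_{-\pi}^{\pi}e^{\ii jt}\,\dd t=\delta_{j,0}$, $j\in\Z$, to collapse the resulting triple sum to $\sum_{l,m}a_{m-l}v_{l}\overline{u_{m}}=\langle u,T_{n}(a)v\rangle$.
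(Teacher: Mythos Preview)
Your proof is correct and follows exactly the paper's approach, which is simply to specialize Lemma~\ref{lem:quadr_form_gamma} to $\gamma(t)=e^{\ii t}$; you have merely spelled out the verification of the hypotheses and the simplification of the integrand in more detail than the paper does.
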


\begin{proof}
 Choose $\gamma(t)=e^{\ii t}$, $t\in[-\pi,\pi]$, in Lemma~\ref{lem:quadr_form_gamma}.
\end{proof}


\begin{thm}\label{thm:gamma_impl_real_spec}
  Let $\gamma$ be a Jordan curve with $0$ in its interior and $a$ be a function given by the Laurent series 
  \[
   a(z)=\sum_{k=-\infty}^{\infty}a_{k}z^{k}
  \]
  which is absolutely convergent in the annulus $\min\{1,r_{\gamma}\}\leq |z| \leq \max\{1,R_{\gamma}\}$.
  Suppose further that $a(\gamma(t))\in\R$, for all $t\in[-\pi,\pi]$. Then
  \[
  \spec(T_{n}(a))\subset\R, \quad\forall n\in\N.
  \]
\end{thm}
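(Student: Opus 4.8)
The plan is to use Lemma~\ref{lem:quadr_form_gamma} to realize $T_n(a)$ as a (possibly non-self-adjoint) operator that is nonetheless similar to a self-adjoint one, via the sesquilinear form $\langle\cdot,\cdot\rangle_\gamma$. The key observation is that, since $a$ is real on $\gamma$, the form $\langle f_u, a f_v\rangle_\gamma$ should behave like a \emph{symmetric} form with respect to a suitable conjugation; once this is made precise, reality of the spectrum will follow by a standard argument (an eigenvalue $\lambda$ with eigenvector $v$ satisfies $\lambda\langle v,v\rangle_{\text{aux}} = \langle v, T_n(a) v\rangle_{\text{aux}}$, and the right-hand side must be real provided the auxiliary pairing is a genuine inner product).

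Concretely, first I would introduce the auxiliary pairing coming from the \emph{reflected} curve: for $u,v\in\C^n$ set
\[
 [u,v]_\gamma := \frac{1}{2\pi\ii}\int_{-\pi}^{\pi} f_v(\gamma(t))\,\overline{f_u(\gamma^*(t))}\,\frac{\dot\gamma(t)}{\gamma(t)}\,\dd t,
\]
which by Lemma~\ref{lem:quadr_form_gamma} (taking $a\equiv 1$) equals $\langle u,v\rangle$, the standard inner product on $\C^n$ — so in fact $[\cdot,\cdot]_\gamma = \langle\cdot,\cdot\rangle$. Next, the crucial symmetry step: I claim that for $a$ real on $\gamma$ one has $\langle f_u, a f_v\rangle_\gamma = \overline{\langle f_v, a f_u\rangle_\gamma}$, i.e. $\langle u, T_n(a) v\rangle = \overline{\langle v, T_n(a) u\rangle}$, which would say $T_n(a)$ is self-adjoint and finish it immediately — but that is false in general (the matrix is genuinely non-self-adjoint), so this naive symmetry cannot hold. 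The resolution is that the correct involution is not complex conjugation of the form but rather the substitution $\gamma(t)\mapsto\gamma^*(t)$, equivalently $z\mapsto 1/\bar z$. I would therefore define the antilinear map $C:\C^n\to\C^n$ determined by $f_{Cu}(z) = \overline{f_u(1/\bar z)}\cdot z^{n-1}$ (so that $C$ sends the polynomial with coefficients $u_k$ to the polynomial with coefficients $\overline{u_{n-1-k}}$, i.e. $C$ is the "reversal + conjugate" involution; note $C^2 = I$). Then one computes, using $\overline{a(\gamma(t))} = a(\gamma(t))$ and the change of the roles of $\gamma,\gamma^*$ under the reflection, that $\langle u, T_n(a) v\rangle = \langle Cv, T_n(a) Cu\rangle$, or equivalently $C T_n(a)^* C = T_n(a)$: the matrix $T_n(a)$ is \emph{$C$-self-adjoint} for this specific conjugation.

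From $C$-self-adjointness alone one does \emph{not} get real spectrum — one needs positivity. So the real content is to produce an inner product in which $T_n(a)$ is self-adjoint. I would do this by exhibiting $T_n(a)$ as similar to a self-adjoint matrix. Here is where I would use the reflected curve genuinely: consider the two pairings $\langle u,v\rangle_+ := \frac{1}{2\pi\ii}\int f_v(\gamma)\overline{f_u(\gamma)}\,\frac{\dot\gamma}{\gamma}\dd t$ (integration against $\gamma$ itself, not $\gamma^*$) — this is a positive-definite Hermitian form because $\frac{1}{2\pi\ii}\frac{\dot\gamma}{\gamma}\dd t = \frac{1}{2\pi\ii}\frac{dz}{z}$ restricted to $\gamma$ is, up to the winding, a positive measure on the curve, and $f_v\equiv 0$ on $\gamma$ forces $v=0$ since a degree $<n$ polynomial has $<n$ zeros while $\gamma$ is infinite. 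Let $G$ be the Gram matrix of $\langle\cdot,\cdot\rangle_+$; it is Hermitian positive-definite. The computation of Lemma~\ref{lem:quadr_form_gamma}, adapted, should give $\langle u, T_n(a) v\rangle_+ = \langle u, H v\rangle_+$ where $H$ is self-adjoint with respect to $\langle\cdot,\cdot\rangle_+$ — this is exactly the place where $a(\gamma)\in\R$ enters, making the form $(u,v)\mapsto \langle f_u, a f_v\rangle_+$ Hermitian. The point is that $G^{-1}T_n(a)^*G$ and $T_n(a)$ both represent the operator "multiplication by $a$ on $\gamma$ then project back to polynomials of degree $<n$" but in different coordinates, and $\langle\cdot,\cdot\rangle_+$ is the inner product adapted to this multiplication operator. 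Hence $G^{1/2} T_n(a) G^{-1/2}$ is Hermitian, so $\spec(T_n(a))\subset\R$.

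The main obstacle I anticipate is the bookkeeping in the second computation: one must check carefully that replacing $\gamma^*$ by $\gamma$ in the pairing still reproduces $T_n(a)$ \emph{after} a similarity, and that the hypothesis $a(\gamma(t))\in\R$ is precisely what converts the $a$-twisted form into a Hermitian one. The annulus condition $\min\{1,r_\gamma\}\le|z|\le\max\{1,R_\gamma\}$ in the statement (rather than just $r_\gamma\le|z|\le R_\gamma$) is presumably needed so that $a$ makes sense on $\mathbb{T}$ as well, allowing the comparison of the two representations (one relative to $\gamma$, one relative to $\mathbb{T}$); I would keep track of where exactly $\mathbb{T}$ has to be sandwiched between the curves. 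Everything else — the absolute convergence permitting interchange of sum and integral, the residue evaluation $\frac{1}{2\pi\ii}\oint_\gamma z^{j-1}\dd z=\delta_{j,0}$, the positive-definiteness of $G$ — is routine given the lemmas already established.
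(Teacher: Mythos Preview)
Your proposal has a genuine gap at the crucial positivity step. You claim that
\[
\langle u,v\rangle_+ := \frac{1}{2\pi\ii}\int_{-\pi}^{\pi} f_v(\gamma(t))\,\overline{f_u(\gamma(t))}\,\frac{\dot\gamma(t)}{\gamma(t)}\,\dd t
\]
is a positive-definite Hermitian form because ``$\frac{1}{2\pi\ii}\frac{dz}{z}$ restricted to $\gamma$ is, up to the winding, a positive measure.'' That is false unless $\gamma$ is a circle centred at the origin. Writing $\gamma=|\gamma|e^{\ii\arg\gamma}$ gives
\[
\frac{1}{2\pi\ii}\frac{\dot\gamma}{\gamma}\,\dd t=\frac{1}{2\pi}\,\dd(\arg\gamma)\;-\;\frac{\ii}{2\pi}\,\dd(\log|\gamma|),
\]
so the ``measure'' has a nonzero imaginary part whenever $|\gamma(t)|$ varies, and $\langle u,u\rangle_+$ is not even real in general. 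Thus there is no Gram matrix $G>0$ to take a square root of, and the similarity argument collapses.

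Trying to repair this by replacing $\frac{dz}{2\pi\ii z}$ with an honest positive measure on $\gamma$ (say arc length) does not work either: the Toeplitz structure of $T_n(a)$ in Lemma~\ref{lem:quadr_form_gamma} comes precisely from the residue identity $\frac{1}{2\pi\ii}\oint_\gamma z^{j-1}\dd z=\delta_{j,0}$, and that identity requires \emph{both} the weight $\dd z/z$ \emph{and} the reflected evaluation $\overline{f_u(\gamma^*(t))}$ rather than $\overline{f_u(\gamma(t))}$. As soon as you put both arguments on $\gamma$, the matrix you obtain is the compression of ``multiplication by $a$'' to polynomials in $L^2(\gamma,\dd\sigma)$, which has real spectrum but is \emph{not} $T_n(a)$ and is not obviously similar to it. In short, the two desiderata---getting exactly $T_n(a)$, and getting a genuine inner product---pull in opposite directions, and your proposal does not bridge them.

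The paper's argument avoids this entirely: it never produces an inner product in which $T_n(a)$ is self-adjoint. Instead, for an eigenpair $(\mu,u)$ it writes $\mu=\langle u,T_n(a)u\rangle=\langle f_u,af_u\rangle_{\mathbb T}$ using Corollary~\ref{cor:quadr_form_unit_circ}, observes that complex conjugation on $\mathbb T$ replaces $a$ by the conjugate symbol $\overline a$, and then uses Lemma~\ref{lem:quadr_form_gamma} to transport the resulting expression to the curve $\gamma$, where the hypothesis $a(\gamma(t))\in\R$ allows one to replace $\overline a$ by $a$ again and recover $\langle u,T_n(a)u\rangle=\mu$ on the other side of the conjugation bar---hence $\mu=\overline\mu$. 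The role of $\mathbb T$ here (and the reason the annulus of convergence must contain it) is that $\mathbb T$ is the unique curve with $\gamma^*=\gamma$, so only on $\mathbb T$ does the pairing of Lemma~\ref{lem:quadr_form_gamma} become the ordinary $L^2$ inner product and complex conjugation behave as you need. Your first paragraph correctly identified that the eigenvector property must be used and that a naive symmetry fails; the paper's resolution is this $\mathbb T\leftrightarrow\gamma$ transfer rather than an auxiliary inner product.
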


\begin{rem}
In particular, if the Laurent series of the symbol $a$ from Theorem~\ref{thm:gamma_impl_real_spec} converges
absolutely for all $z\in\C\setminus\{0\}$, then $\spec(T_{n}(a))\subset\R$, for all $n\in\N$, provided
that $a^{-1}(\R)$ contains a Jordan curve (which has to contain $0$ in its interior). This applies for Laurent 
polynomial symbols \eqref{eq:def_b} especially, which yields the implication (ii)$\Rightarrow$(iii) of Theorem~\ref{thm:summary}.
\end{rem}

\begin{proof}[Proof of Theorem~\ref{thm:gamma_impl_real_spec}]
 Let $\mu\in\spec(T_{n}(a))$ and $u\in\C^{n}$ be the normalized eigenvector corresponding to $\mu$. 
 Then, by using Corollary~\ref{cor:quadr_form_unit_circ}, one obtains
 \begin{equation}
  \mu=\langle u, T_{n}(a)u \rangle=\langle f_u, af_u \rangle_{\mathbb{T}}
  =\overline{\langle f_u, \overline{a}f_u \rangle_{\mathbb{T}}}.
 \label{eq:mu_eq_complex_unit_circ_in_proof}
 \end{equation}
 On the other hand, by applying Lemma~\ref{lem:quadr_form_gamma} twice, one gets
 \begin{equation}
  \langle f_u, \overline{a}f_u \rangle_{\mathbb{T}}=\langle u, T_{n}(\overline{a})u \rangle=
  \langle f_u, \overline{a}f_u \rangle_{\gamma}=\langle f_u,af_u \rangle_{\gamma},
 \label{eq:unit_circ_eq_gamma_in_proof}
 \end{equation}
 where the last equality holds since $a(\gamma(t))\in\R$, for all $t\in[-\pi,\pi]$, by the assumption.
 Finally, by using~\eqref{eq:mu_eq_complex_unit_circ_in_proof} together with~\eqref{eq:unit_circ_eq_gamma_in_proof}
 and applying Lemma~\ref{lem:quadr_form_gamma} again, one arrives at the equality
 \[
  \mu=\overline{\langle f_u,af_u \rangle_{\gamma}}=\overline{\langle u, T_{n}(a)u \rangle}=\overline{\mu}.
 \]
 Hence $\mu\in\R$.
\end{proof}

\begin{rem}\label{rem:Hessenberg_real_coef}
 Note that the entries of the Toeplitz matrix considered in Theorem~\ref{thm:gamma_impl_real_spec} are allowed to be complex. Clearly, there exists Toeplitz matrices satisfying assumptions of 
 Theorem~\ref{thm:gamma_impl_real_spec} with non-real entries, for instance, any self-adjoint Toeplitz matrix with non-real entries whose symbol belongs to the Wiener 
 algebra. On the other hand, if a Toeplitz matrix is at the same time an upper or lower Hessenberg matrix, then its entries have to be real. More precisely, if the symbol has the form
 \[
 a(z)=\frac{1}{z}+\sum_{n=0}^{\infty}a_{n}z^{n}
 \]
 and the assumptions of Theorem~\ref{thm:gamma_impl_real_spec} are fulfilled, then $a_{n}\in\R$ for all $n\in\N_{0}$. Indeed, it is easy to verify that $D_{n}:=\det T_{n}(a)$
 satisfies the recurrence
 \[
  D_{n}=(-1)^{n-1}a_{n-1}+\sum_{k=0}^{n-2}(-1)^{k}a_{k}D_{n-k-1}, \quad \forall n\in\N.
 \]
 Since, by~Theorem~\ref{thm:gamma_impl_real_spec}, all eigenvalues of $T_{n}(a)$ are real, $D_{n}\in\R$ for any $n\in\N$. Hence one can use that $a_{0}=D_{1}\in\R$ and the above recurrence
 to prove by induction that $a_{n}\in\R$ for all $n\in\N_{0}$.
\end{rem}

\subsection{The limiting measure and the moment problem}

For the analysis of this subsection, we restrict ourself with real banded Toeplitz matrices $T(b)$. More precisely, we consider symbols
\begin{equation}
 b(z)=\sum_{k=-r}^{s}a_{k}z^{k}, \;\mbox{ where }\; a_{k}\in\R, \; a_{-r}a_{s}\neq0 \;\mbox{ and }\; r,s\in\N.
\label{eq:def_b_real}
\end{equation}

Our first goal is to provide a more detailed description of the limiting measure $\mu$ for the symbols \eqref{eq:def_b_real} in terms of the 
Jordan curve $\gamma$ present in $b^{-1}(\R)$. For the sake of simplicity, we focus on the situation when the Jordan curve admits 
the polar parametrization:
\begin{equation}
 \gamma(t)=\rho(t)e^{\ii t}, \quad t\in[-\pi,\pi],
\label{eq:gamma_polar_par}
\end{equation}
where $\rho(t)>0$ for all $t\in[-\pi,\pi]$. We should mention that so far we did not observe any example of $b\in\mathscr{R}$ where the Jordan curve in $b^{-1}(\R)$
would intersect a radial ray in more than one point; so the parametrization~\eqref{eq:gamma_polar_par} would be impossible for such a curve. In particular, all the examples presented
in Section~\ref{sec:ex_numer} where $\gamma$ is known explicitly satisfy~\eqref{eq:gamma_polar_par}.

Incidentally, the problem of description of the limiting measure for $b\in\mathscr{R}$ is closely related to the classical Hamburger moment problem. Standard references on this subject are 
\cite{akhiezer90, chihara78, shohat43,simon_am98}. A \emph{solution} of the Hamburger moment problem with a moment sequence $\{h_{m}\}_{m\in\N_{0}}\subset\R$, $h_{0}=1$, is a probability measure 
$\mu$ supported in $\R$ with all moments finite and equal to $h_{m}$, i.e.,
\begin{equation}
 \int_{\R}x^{m}\dd\mu(x)=h_{m}, \quad \forall m\in\N_{0}.
\label{eq:h_m_int_mom}
\end{equation}
By the well-known Hamburger theorem, the Hamburger moment problem with a moment sequence $\{h_{m}\}_{m\in\N_{0}}$, $h_{0}=1$, has a solution if and only if the Hankel matrix
 \begin{equation}
   H_{n}:=\begin{pmatrix}
	h_{0} & h_{1} & \dots & h_{n-1}\\
	h_{1} & h_{2} & \dots & h_{n}\\
	\vdots & \vdots & \ddots & \vdots\\
	h_{n-1} & h_{n} & \dots & h_{2n-2}
      \end{pmatrix}\!,
 \label{eq:hankel_matrix}
 \end{equation}
 is positive definite for all $n\in\N$. The solution may be unique (the determinate case) or they can be infinitely many (the indeterminate case). However, if there is a solution
 whose support is compact, then it is unique, see, for example, \cite[Prop.~1.5]{simon_am98}.
 
For a later purpose, let us also recall the formula
\begin{equation}
 \lim_{n\to\infty}\frac{1}{n}\Tr\left(T_{n}(b)\right)^{m}=\frac{1}{2\pi}\int_{-\pi}^{\pi}b^{m}\left(e^{\ii t}\right)\dd t, \quad m\in\N,
\label{eq:szego}
\end{equation}
which is usually attributed to G.~Szeg\H{o}. In fact, Szeg\H{o} proved \eqref{eq:szego} for any symbol $b$ belonging to the Wiener class, imposing however, an additional assumption 
which is equivalent to the self-adjointness of $T(b)$. Later on, M.~Kac showed that \eqref{eq:szego} remains valid for any $b$ from the Wiener class without the additional assumption, 
see \cite{kac_dmj54}.

\begin{lem}\label{lem:b_in_R_sol_Hmp}
 Let $b\in\mathscr{R}$ have the form~\eqref{eq:def_b_real}. Then the Hamburger moment problem with the moment sequence given by
 \begin{equation}
  h_{0}:=1 \; \mbox{ and } \; h_{m}:=\frac{1}{2\pi}\int_{-\pi}^{\pi}b^{m}\left(e^{\ii t}\right)\dd t, \quad  m\in\N
 \label{eq:def_h_m_b}
\end{equation}
 has a solution which is unique and coincides with the limiting measure $\mu$.
\end{lem}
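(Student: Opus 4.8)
The plan is to identify the numbers $h_m$ defined in~\eqref{eq:def_h_m_b} as the moments of the limiting measure $\mu$, and then to invoke the uniqueness of compactly supported solutions of the Hamburger moment problem. First I would recall that since $b\in\mathscr{R}$, by definition $\Lambda(b)\subset\R$, and by the remark following Definition~\ref{def:class_R} the set $\Lambda(b)$ is in fact a compact interval $[\alpha,\beta]\subset\R$. The eigenvalue-counting measures $\mu_n$ from~\eqref{eq:def_mu_n} converge weakly to $\mu$ by the Schmidt--Spitzer theorem, and $\mu$ is supported on $\Lambda(b)$, hence $\mu$ is a compactly supported probability measure on $\R$.

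The key computation is that $\mu$ has moments $h_m$. For this I would use the Szeg\H{o}--Kac formula~\eqref{eq:szego}, which gives
\[
 \lim_{n\to\infty}\frac{1}{n}\Tr\left(T_{n}(b)\right)^{m}=\frac{1}{2\pi}\int_{-\pi}^{\pi}b^{m}\left(e^{\ii t}\right)\dd t=h_{m},\qquad m\in\N.
\]
On the other hand, writing $\lambda_{1,n},\dots,\lambda_{n,n}$ for the eigenvalues of $T_n(b)$, one has $\Tr\left(T_n(b)\right)^m=\sum_{k=1}^n\lambda_{k,n}^m$, so
\[
 \frac{1}{n}\Tr\left(T_{n}(b)\right)^{m}=\int_{\C}\lambda^{m}\dd\mu_{n}(\lambda).
\]
Since all $\lambda_{k,n}$ are real (this uses Theorem~\ref{thm:summary}, i.e.\ $b\in\mathscr{R}$ forces $\spec(T_n(b))\subset\R$) and, more relevantly, they are uniformly bounded — indeed the eigenvalues of $T_n(b)$ are contained in a fixed compact set independent of $n$, so all the $\mu_n$ are supported in a common compact interval $K\subset\R$ — the function $\lambda\mapsto\lambda^m$ may be replaced on a neighbourhood of $K$ by a bounded continuous function without changing the integrals. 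Weak convergence $\mu_n\to\mu$ then yields $\int_{\R}\lambda^m\dd\mu_n\to\int_{\R}\lambda^m\dd\mu$, and combining with the Szeg\H{o}--Kac limit gives $\int_{\R}x^m\dd\mu(x)=h_m$ for all $m\in\N$; the case $m=0$ is trivial since $\mu$ is a probability measure. Thus $\mu$ is a solution of the Hamburger moment problem for $\{h_m\}_{m\in\N_0}$, and in particular this moment problem has a solution (so, a posteriori, all the Hankel matrices $H_n$ are positive definite).

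Finally, since $\mu$ is compactly supported, the cited uniqueness result (\cite[Prop.~1.5]{simon_am98}) shows that $\mu$ is the only solution of this moment problem. I expect the main technical point — though it is routine rather than deep — to be the justification that $\mu_n\to\mu$ weakly allows passage to the limit for the unbounded test functions $\lambda\mapsto\lambda^m$; the clean way around this is precisely the uniform compactness of the supports of the $\mu_n$, which follows from the norm boundedness of $T_n(b)$ (each $T_n(b)$ is a principal submatrix of the bounded operator $T(b)$ on $\ell^2(\N)$, so $\|T_n(b)\|\le\|T(b)\|$) together with the reality of the spectra; alternatively one truncates $\lambda^m$ outside a large interval and estimates the tail using that $\mu_n(K^c)=0$ for a fixed $K$.
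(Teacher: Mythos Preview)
Your proof is correct and follows essentially the same route as the paper: identify the $h_m$ as the moments of $\mu$ via the Szeg\H{o}--Kac formula~\eqref{eq:szego} together with the weak convergence $\mu_n\to\mu$, and then invoke uniqueness for compactly supported solutions of the Hamburger moment problem. You are in fact more explicit than the paper in justifying the passage to the limit for the unbounded test functions $\lambda\mapsto\lambda^m$ (via the uniform bound $\|T_n(b)\|\le\|T(b)\|$ and the reality of the spectra), whereas the paper simply writes $\int_{\R}x^m\dd\mu(x)=\lim_{n\to\infty}\int_{\R}x^m\dd\mu_n(x)$ without further comment.
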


\begin{proof}
 First, note that $h_{m}\in\R$, for all $m\in\N_{0}$, since the coefficients of $b$ are assumed to be real. Moreover, since $b\in\mathscr{R}$, $\supp\mu\equiv\Lambda(b)\subset\R$ and it is a compact set.
 Hence, by the above discussion, it suffices to verify that $\mu$ satisfies~\eqref{eq:h_m_int_mom}. To this end, recall that $\mu$ is the weak limit of the measures $\mu_{n}$ given by~\eqref{eq:def_mu_n}.
 Then, by using the limit formula~\eqref{eq:szego}, one gets
 \[
  \int_{\R}x^{m}\dd\mu(x)=\lim_{n\to\infty}\int_{\R}x^{m}\dd\mu_{n}(x)=\lim_{n\to\infty}\frac{1}{n}\Tr\left(T_{n}(b)\right)^{m}=h_{m},
 \]
 for all $m\in\N_{0}$.
\end{proof}

\begin{cor}\label{cor:b_in_R_impl_PD_Hankel}
 If $b\in\mathscr{R}$ is of the form~\eqref{eq:def_b_real}, then the Hankel matrix~\eqref{eq:hankel_matrix} with entries given by~\eqref{eq:def_h_m_b} is positive definite for all $n\in\N$.
\end{cor}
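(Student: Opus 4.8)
The plan is to read the corollary directly off Lemma~\ref{lem:b_in_R_sol_Hmp} together with the Hamburger theorem recalled above. Indeed, Lemma~\ref{lem:b_in_R_sol_Hmp} asserts that, for $b\in\mathscr{R}$ of the form~\eqref{eq:def_b_real}, the Hamburger moment problem with the moment sequence $\{h_{m}\}_{m\in\N_{0}}$ of~\eqref{eq:def_h_m_b} is solvable (a solution being the limiting measure $\mu$). Since, as recalled before Lemma~\ref{lem:b_in_R_sol_Hmp}, solvability of a Hamburger moment problem with $h_{0}=1$ is equivalent to positive definiteness of all the Hankel matrices~\eqref{eq:hankel_matrix}, the assertion follows at once.

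The only step deserving a small comment is the passage from positive \emph{semi}-definiteness of the $H_{n}$ — which is what mere existence of a representing measure guarantees in general — to strict positive definiteness; the latter additionally requires the representing measure to have infinitely many points in its support. To be complete I would therefore observe that $\mu$ cannot be finitely supported: by Hirschman's result quoted in the introduction, $\mu$ is absolutely continuous with respect to the (non-atomic) arclength measure on $\supp\mu=\Lambda(b)$, hence $\mu$ is itself non-atomic. (Alternatively, the remark following Definition~\ref{def:class_R} identifies $\Lambda(b)$ with a closed interval, which is nondegenerate because $T(b)$ is a genuinely non-triangular banded Toeplitz matrix, i.e.\ $a_{-r}a_{s}\neq0$ with $r,s\in\N$.) Consequently, for every nonzero $u\in\C^{n}$ the polynomial $f_{u}$ does not vanish $\mu$-almost everywhere, so $\langle u,H_{n}u\rangle=\int_{\R}|f_{u}(x)|^{2}\dd\mu(x)>0$, which is exactly $H_{n}>0$.

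I do not anticipate any real obstacle: the whole substance of the statement is already carried by Lemma~\ref{lem:b_in_R_sol_Hmp}, which in turn rests on the trace formula~\eqref{eq:szego} and the weak convergence $\mu_{n}\to\mu$; the corollary itself is pure bookkeeping, the only subtlety being the standard semi-definite-versus-definite distinction handled above.
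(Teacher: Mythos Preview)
Your argument is correct and follows exactly the route implicit in the paper, which gives no separate proof and simply regards the corollary as an immediate consequence of Lemma~\ref{lem:b_in_R_sol_Hmp} together with Hamburger's theorem as stated just before it. Your extra paragraph on the semi-definite versus strictly positive-definite distinction (using Hirschman's absolute continuity of $\mu$ to rule out finite support) is a welcome addition that the paper glosses over.
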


Note that, if the coefficients of $b\in\mathscr{R}$ are real, then $\mathbb{R}\setminus\{0\}\subset b^{-1}(\R)$ and $b^{-1}(\R)$ is symmetric with respect to the real line.
The Jordan curve $\gamma$ present in $b^{-1}(\R)$ intersects the real line at exactly two points (negative and positive) which are the critical points of $b$.

\begin{thm}\label{thm:main_suff_generalized}
 Suppose that $b\in\mathscr{R}$ is as in~\eqref{eq:def_b_real} and the Jordan curve $\gamma$ contained in $b^{-1}(\R)$ admits the polar parametrization~\eqref{eq:gamma_polar_par}.
 Further, let $\ell\in\N_{0}$ be the number of critical points of $b$ in $\gamma((0,\pi))$ and $0=:\phi_{0}<\phi_{1}<\dots<\phi_{\ell}<\phi_{\ell+1}:=\pi$ 
 are such that $b'(\gamma(\phi_{j}))=0$ for all $j\in\{0,1,\dots,\ell+1\}$. Then $b\circ\gamma$ restricted to $(\phi_{i-1},\phi_{i})$
 is strictly monotone for all $i\in\{1,2,\dots,\ell+1\}$, and the limiting measure $\mu=\mu_{1}+\mu_{2}+\dots+\mu_{\ell+1}$, where $\mu_{i}$ is an absolutely continuous 
 measure supported on $[\alpha_{i},\beta_{i}]:=b(\gamma([\phi_{i-1},\phi_{i}]))$ whose density is given by
 \begin{equation}
  \frac{\dd\mu_{i}}{\dd x}(x)=\pm\frac{1}{\pi}\frac{\dd}{\dd x}(b\circ\gamma)^{-1}(x)
 \label{eq:mu_density_generalized}
 \end{equation}
 for all $x\in(\alpha_{i},\beta_{i})$ and all $i\in\{1,2,\dots,\ell+1\}$. In~\eqref{eq:mu_density_generalized} the $+$ sign is used
 when $b\circ\gamma$ increases on $(\alpha_{i},\beta_{i})$, and the $-$ sign is used otherwise.
\end{thm}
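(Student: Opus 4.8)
The plan is to avoid analysing the weak limit of the empirical measures $\mu_{n}$ directly, and instead to verify that the measure $\nu:=\mu_{1}+\dots+\mu_{\ell+1}$ built from \eqref{eq:mu_density_generalized} is a compactly supported probability measure whose moments coincide with $h_{m}$; the conclusion then follows from the uniqueness part of Lemma~\ref{lem:b_in_R_sol_Hmp}. Throughout put $g:=b\circ\gamma$. Since $\gamma([-\pi,\pi])\subset b^{-1}(\R)$, the function $g$ is real-valued; since $b$ has real coefficients, $b^{-1}(\R)$ is invariant under complex conjugation, and because $\gamma$ meets every radial ray exactly once (which is what \eqref{eq:gamma_polar_par} means) this forces $\rho(-t)=\rho(t)$ and $\gamma(-t)=\overline{\gamma(t)}$, whence $g(-t)=b(\overline{\gamma(t)})=\overline{b(\gamma(t))}=g(t)$; that is, $g$ is even. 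On each open arc of $b^{-1}(\R)$ the curve $\gamma$ is real-analytic, so $\rho$ is piecewise real-analytic and $\gamma$ is rectifiable. \emph{Monotonicity.} For $t\in(\phi_{i-1},\phi_{i})$ one has $\dot g(t)=b'(\gamma(t))\dot\gamma(t)$ with $\dot\gamma(t)=(\dot\rho(t)+\ii\rho(t))e^{\ii t}\neq0$ because $\rho(t)>0$, and $b'(\gamma(t))\neq0$ because, $\gamma$ being injective, the $\ell$ critical points of $b$ on $\gamma((0,\pi))$ are exactly $\gamma(\phi_{1}),\dots,\gamma(\phi_{\ell})$. Hence $\dot g$ has constant sign on $(\phi_{i-1},\phi_{i})$, so $g$ is strictly monotone there; being continuous on $[\phi_{i-1},\phi_{i}]$ it is a homeomorphism onto $[\alpha_{i},\beta_{i}]=g([\phi_{i-1},\phi_{i}])$ with $C^{1}$ inverse $g^{-1}$ on $(\alpha_{i},\beta_{i})$. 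This is the monotonicity assertion, and it shows \eqref{eq:mu_density_generalized} is well defined; the sign convention makes $\dd\mu_{i}/\dd x\geq0$, so each $\mu_{i}$ is a finite positive measure and $\nu$ is a compactly supported positive measure.

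\emph{Moments of $\nu$.} Fix $m\in\N_{0}$ and $i\in\{1,\dots,\ell+1\}$. The substitution $x=g(t)$ in $\int_{\alpha_{i}}^{\beta_{i}}x^{m}\,\dd\mu_{i}(x)$ replaces $\dd\mu_{i}$ by $\pm\frac1\pi\,\dd t$: when $g$ increases, $\alpha_{i},\beta_{i}$ correspond to $\phi_{i-1},\phi_{i}$ and the sign is $+$; when $g$ decreases they correspond to $\phi_{i},\phi_{i-1}$ and the induced reversal of the limits of integration is compensated by the sign $-$. In both cases one gets
\[
\int_{\R}x^{m}\,\dd\mu_{i}(x)=\frac1\pi\int_{\phi_{i-1}}^{\phi_{i}}b^{m}(\gamma(t))\,\dd t .
\]
Summing over $i$, using $[0,\pi]=\bigcup_{i=1}^{\ell+1}[\phi_{i-1},\phi_{i}]$, and then the evenness of $g$, we obtain
\[
\int_{\R}x^{m}\,\dd\nu(x)=\frac1\pi\int_{0}^{\pi}b^{m}(\gamma(t))\,\dd t=\frac1{2\pi}\int_{-\pi}^{\pi}b^{m}(\gamma(t))\,\dd t .
\]

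\emph{Matching with \eqref{eq:def_h_m_b} and conclusion.} Since $z\mapsto b^{m}(z)/z$ is holomorphic on $\C\setminus\{0\}$ and both $\gamma$ and $\mathbb{T}$ are positively oriented Jordan curves around $0$, Cauchy's theorem yields
\[
h_{m}=\frac1{2\pi}\int_{-\pi}^{\pi}b^{m}\!\left(e^{\ii t}\right)\dd t=\frac1{2\pi\ii}\oint_{\gamma}\frac{b^{m}(z)}{z}\,\dd z=\frac1{2\pi\ii}\int_{-\pi}^{\pi}b^{m}(\gamma(t))\left(\ii+\frac{\dot\rho(t)}{\rho(t)}\right)\dd t ,
\]
using $\dot\gamma/\gamma=\ii+\dot\rho/\rho$. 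The term with $\dot\rho/\rho$ vanishes because its integrand is the product of the even function $b^{m}(\gamma(t))$ with the odd function $\dot\rho(t)/\rho(t)$ (again $\rho$ even is used). Hence $h_{m}=\frac1{2\pi}\int_{-\pi}^{\pi}b^{m}(\gamma(t))\,\dd t=\int_{\R}x^{m}\,\dd\nu(x)$ for all $m\in\N_{0}$; in particular $\nu(\R)=h_{0}=1$. Thus $\nu$ is a compactly supported probability measure solving the Hamburger moment problem \eqref{eq:def_h_m_b}, which by Lemma~\ref{lem:b_in_R_sol_Hmp} has $\mu$ as its unique solution; therefore $\mu=\nu=\mu_{1}+\dots+\mu_{\ell+1}$.

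The only delicate point is the sign bookkeeping in the second step: one must check that for a decreasing branch the reversal of the orientation of the $x$-integral exactly cancels the extra minus sign in \eqref{eq:mu_density_generalized}, so that the formula for the $m$-th moment of $\mu_{i}$ is orientation-independent. Everything else is a routine change of variables, the elementary conjugation symmetry of $\gamma$, a contour deformation, and the uniqueness in the determinate moment problem.
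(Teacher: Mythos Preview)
Your proof is correct and follows essentially the same route as the paper's: both establish monotonicity of $b\circ\gamma$ on each sub-arc via $\dot\gamma\neq0$ and $b'\circ\gamma\neq0$, deform the contour from $\mathbb{T}$ to $\gamma$, use the conjugation symmetry of $\gamma$ (equivalently, the evenness of $\rho$) to reduce the moment integral to $\frac{1}{\pi}\int_{0}^{\pi}b^{m}(\gamma(t))\,\dd t$, change variables on each monotone piece, and conclude by the uniqueness in Lemma~\ref{lem:b_in_R_sol_Hmp}. The only difference is cosmetic: you make the elimination of the $\dot\rho/\rho$ term explicit via parity, whereas the paper handles the passage from the contour integral to $\frac{1}{\pi}\int_{0}^{\pi}$ in one terser sentence.
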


\begin{rem}
 Notice that $\supp\mu=[\min_{1\leq i\leq\ell+1}\alpha_{i},\max_{1\leq i\leq\ell+1}\beta_{i}].$ 
 Further, observe that, in particular, Theorem~\ref{thm:main_suff_generalized} provides a description of the limiting measure for all self-adjoint real banded Toeplitz matrices
 since, in this case, the Jordan curve is just the unit circle. An illustration of this situation is given in Example 5 in Subsection~\ref{subsec:var_numer}.
\end{rem}

\begin{proof}
 First, note that, for any $i\in\{1,2\dots,\ell+1\}$, the first derivative of the smooth function $b\circ\gamma:(\phi_{i-1},\phi_{i})\to\R$ 
 does not change sign on $(\phi_{i-1},\phi_{i})$ because
 \[
  (b\circ\gamma)'(t)=b'(\gamma(t))\left(\rho'(t)+\ii\rho(t)\right)e^{\ii t}=0
 \]
 if and only if $t=\phi_{j}$ for some $j\in\{0,1,\dots,\ell+1\}$. Hence $b\circ\gamma$ restricted to $(\phi_{i-1},\phi_{i})$ is either strictly increasing or strictly decreasing
 and the same holds for the the inverse $(b\circ\gamma)^{-1}:(\alpha_{i},\beta_{i})\to(\phi_{i-1},\phi_{i})$. Consequently, by formula~\eqref{eq:mu_density_generalized},
 a positive measure $\mu_{i}$ supported on $[\alpha_{i},\beta_{i}]$ is well-defined for all $i\in\{1,2\dots,\ell+1\}$. Let us denote $\nu:=\mu_{1}+\mu_{2}+\dots+\mu_{\ell+1}$.
 
 In the second part of the proof, we verify that the measure $\nu$ which is supported on $[\min_{1\leq i\leq\ell+1}\alpha_{i},\max_{1\leq i\leq\ell+1}\beta_{i}]$ has the same moments~\eqref{eq:def_h_m_b}
 as the limiting measure $\mu$. Then $\mu=\nu$ by Lemma \ref{lem:b_in_R_sol_Hmp}.
 
 Let $m\in\N_{0}$ be fixed. By deforming the positively oriented unit circle into the Jordan curve $\gamma$, one gets the equality
 \[
  h_{m}=\frac{1}{2\pi\ii}\oint_{\mathbb{T}}b^{m}(z)\frac{\dd z}{z}=\frac{1}{2\pi\ii}\oint_{\gamma}b^{m}(z)\frac{\dd z}{z}.
 \]
 Next, by using the symmetry $\overline{b(z)}=b(\overline{z})$, for $z\in\C\setminus\{0\}$, parametrization~\eqref{eq:gamma_polar_par}, and the fact that $b\circ\gamma$ is real-valued, 
 one arrives at the expression
 \[
  h_{m}=\frac{1}{\pi}\int_{0}^{\pi}b^{m}\left(\gamma(t)\right)\dd t.
 \]
 By splitting the above integral according to the positions of the critical points of $b$ on the arc of $\gamma$ in the upper half-plane, one further obtains
 \[
  h_{m}=\frac{1}{\pi}\sum_{i=1}^{\ell+1}\int_{\phi_{i-1}}^{\phi_{i}}b^{m}\left(\gamma(t)\right)\dd t.
 \]
 Finally, since $b\circ\gamma$ restricted to $(\phi_{i-1},\phi_{i})$ is monotone, we can change the variable $x=b(\gamma(t))$ in each of the above integrals getting
 \[
  h_{m}=\sum_{i=1}^{\ell+1}\int_{\alpha_{i}}^{\beta_{i}}x^{m}\dd\mu_{i}(x)=\int_{\R}x^{m}\dd\nu(x),
 \]
 which concludes the proof.
 \end{proof}
 
 Generically, there is no critical point of $b$ located on the curve $\gamma$ in the upper half-plane, i.e., $\ell=0$ in Theorem~\ref{thm:main_suff_generalized}.
 Hence the only critical points located on the curve $\gamma$ are the two intersection points of $\gamma$ and the real line. In this case, the statement of Theorem~\ref{thm:main_suff_generalized}
 gets a simpler form. Although it is just a particular case of Theorem~\ref{thm:main_suff_generalized}, we formulate this simpler statement separately below.
 
 \begin{thm}\label{thm:main_suff}
 Suppose that $b\in\mathscr{R}$ is as in~\eqref{eq:def_b_real} and the Jordan curve $\gamma$ contained in $b^{-1}(\R)$ admits the polar parametrization~\eqref{eq:gamma_polar_par}. 
 Moreover, let $b'(\gamma(t))\neq0$ for all $t\in(0,\pi)$. Then $b\circ\gamma$ restricted to $(0,\pi)$ is either strictly increasing or decreasing; the limiting measure $\mu$
 is supported on the interval $[\alpha,\beta]:=b(\gamma([0,\pi]))$ and its density satisfies
 \begin{equation}
  \frac{\dd\mu}{\dd x}(x)=\pm\frac{1}{\pi}\frac{\dd}{\dd x}(b\circ\gamma)^{-1}(x),
 \label{eq:mu_density_simple}
 \end{equation}
 for $x\in(\alpha,\beta)$, where the $+$ sign is used when $b\circ\gamma$ increases on $(0,\pi)$, and the $-$ sign  is used otherwise.
\end{thm}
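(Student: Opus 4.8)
The plan is to obtain Theorem~\ref{thm:main_suff} as the special case $\ell=0$ of Theorem~\ref{thm:main_suff_generalized}. The only thing to check is that the extra hypothesis imposed here, namely $b'(\gamma(t))\neq0$ for all $t\in(0,\pi)$, is exactly the statement that $\gamma$ carries no critical point of $b$ in $\gamma((0,\pi))$, i.e.\ that the integer $\ell$ appearing in Theorem~\ref{thm:main_suff_generalized} equals $0$. Once this identification is made, everything else is read off verbatim.

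First I would recall, from the discussion preceding Theorem~\ref{thm:main_suff_generalized}, that when the coefficients of $b$ are real the Jordan curve $\gamma\subset b^{-1}(\R)$ meets the real line in exactly two points, one positive and one negative, and that both of these are critical points of $b$. Under the polar parametrization~\eqref{eq:gamma_polar_par} with $t\in[-\pi,\pi]$ these two points are $\gamma(0)$ and $\gamma(\pi)$. Hence, in the notation of Theorem~\ref{thm:main_suff_generalized}, the defining property ``$b'(\gamma(\phi_j))=0$'' is satisfied at $\phi_0:=0$ and $\phi_{\ell+1}:=\pi$, and the hypothesis $b'(\gamma(t))\neq0$ on $(0,\pi)$ forces $\ell=0$, so that the partition $0=\phi_0<\phi_1<\dots<\phi_{\ell+1}=\pi$ degenerates to the single interval $(\phi_0,\phi_1)=(0,\pi)$.

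Then I would simply invoke Theorem~\ref{thm:main_suff_generalized} with $\ell=0$: it gives that $b\circ\gamma$ is strictly monotone on $(0,\pi)$, that the limiting measure is $\mu=\mu_1$, a single absolutely continuous measure supported on $[\alpha_1,\beta_1]=b(\gamma([0,\pi]))=[\alpha,\beta]$, and that its density is
\[
 \frac{\dd\mu}{\dd x}(x)=\pm\frac{1}{\pi}\frac{\dd}{\dd x}(b\circ\gamma)^{-1}(x),\qquad x\in(\alpha,\beta),
\]
with the sign rule as stated, which is precisely~\eqref{eq:mu_density_simple}. I do not anticipate any genuine obstacle: the content of this theorem is entirely contained in Theorem~\ref{thm:main_suff_generalized}, and the proof is a one-line specialization. (If one preferred a self-contained argument, one could reproduce the computation in the proof of Theorem~\ref{thm:main_suff_generalized}, omitting the splitting of the integral over $(0,\pi)$ at interior critical points, and use Lemma~\ref{lem:b_in_R_sol_Hmp} to match moments; but this merely repeats the general proof in a degenerate case.)
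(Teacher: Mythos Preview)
Your proposal is correct and matches the paper's approach exactly: the paper explicitly states that Theorem~\ref{thm:main_suff} is ``just a particular case of Theorem~\ref{thm:main_suff_generalized}'' corresponding to $\ell=0$ and does not give a separate proof. Your identification of the hypothesis $b'(\gamma(t))\neq0$ on $(0,\pi)$ with $\ell=0$ is precisely the content needed.
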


\begin{rem}
 Since $b\circ\gamma$ is a one-to-one mapping from $[0,\pi]$ onto $[\alpha,\beta]$ under the assumptions of Theorem~\ref{thm:main_suff},
 we can use it for a reparametrization of the arc of $\gamma$ in the upper half-plane. Namely, we denote by $\gamma_{b}:=\gamma\circ(b\circ\gamma)^{-1}:[\alpha,\beta]\to\{z\in\C \mid \Im z\geq0\}$
 the new parametrization of the arc of $\gamma$ to distinguish the notation. Recalling~\eqref{eq:gamma_polar_par}, one observes that
 \[
  \Arg\gamma_{b}(x)=(b\circ\gamma)^{-1}(x), \quad \forall x\in[\alpha,\beta].
 \]
 Then the limiting measure is determined by the change of the argument of the Jordan curve contained in $b^{-1}(\R)$ provided that the parametrization $\gamma_{b}$ is used. More concretely,
 if we denote the distribution function of $\mu$ by $F_{\mu}:=\mu\left([\alpha,\cdot)\right)$, then formula~\eqref{eq:mu_density_simple} can be rewritten as
 \[
  F_{\mu}(x)=\begin{cases}
                         \frac{1}{\pi}\Arg\gamma_{b}(x), & \mbox{ if } \gamma_{b} \mbox{ is positively (counterclockwise) oriented},\\
                         1-\frac{1}{\pi}\Arg\gamma_{b}(x), & \mbox{ if } \gamma_{b} \mbox{ is negatively (clockwise) oriented}.
                        \end{cases}
 \]
 Yet another equivalent formulation reads
 \begin{equation}
    F_{\mu}\left(b(\gamma(t))\right)=\begin{cases}
                         \frac{1}{\pi}t, & \mbox{ if } b(\gamma(0))<b(\gamma(\pi)),\\
                         1-\frac{1}{\pi}t, & \mbox{ if } b(\gamma(\pi))<b(\gamma(0)),
                        \end{cases}
 \label{eq:mu_dist_func_simple_non_invert}
 \end{equation}
 for $t\in[0,\pi]$.
\end{rem}

Let us once more go back to the Hamburger moment problem with the moment sequence given by~\eqref{eq:def_h_m_b}. We derive an integral formula for $\det H_{n}$.

\begin{thm}
 Let $b$ be as in~\eqref{eq:def_b} and $H_{n}$ be the Hankel matrix \eqref{eq:hankel_matrix} with the entries given by~\eqref{eq:def_h_m_b}. Then, for all $n\in\N$, one has
 \begin{equation}
  \det H_{n}=\frac{1}{(2\pi)^{n}n!}\int_{\pi}^{\pi}\int_{-\pi}^{\pi}\dots\int_{-\pi}^{\pi}
  \prod_{1\leq i<j\leq n}\left(b\left(e^{\ii t_{j}}\right)-b\left(e^{\ii t_{i}}\right)\right)^{\!2}
  \dd t_{1}\dd t_{2}\dots\dd t_{n}.
 \label{eq:det_H_n_integral_form}
 \end{equation}
 Consequently, if $b$ has the form~\eqref{eq:def_b_real} and $b\in\mathscr{R}$, then
 \begin{equation}
  \int_{\pi}^{\pi}\int_{-\pi}^{\pi}\dots\int_{-\pi}^{\pi}
  \prod_{1\leq i<j\leq n}\left(b\left(e^{\ii t_{j}}\right)-b\left(e^{\ii t_{i}}\right)\right)^{\!2}
  \dd t_{1}\dd t_{2}\dots\dd t_{n}>0, \quad \forall n\in\N.
 \label{eq:int_crit_pos}
 \end{equation}
\end{thm}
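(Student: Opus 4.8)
The plan is to recognize \eqref{eq:det_H_n_integral_form} as an instance of the classical Andr\'eief (Heine--Gram) identity for moment determinants, with the Vandermonde structure of the moment matrix doing the rest. Writing $\zeta=e^{\ii t}$ and setting $f_{k}(t):=b(\zeta)^{k-1}$ for $k=1,\dots,n$, viewed as functions on $[-\pi,\pi]$ equipped with the normalized Lebesgue measure $\dd t/(2\pi)$, the very definition~\eqref{eq:def_h_m_b} of the moments says that the Hankel matrix is the Gram-type matrix
\[
 H_{n}=\left(\frac{1}{2\pi}\int_{-\pi}^{\pi}f_{k}(t)f_{l}(t)\,\dd t\right)_{\!k,l=1}^{n}\!.
\]

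The first step is to invoke Andr\'eief's identity: for functions $f_{1},\dots,f_{n}$ on a measure space $(X,\nu)$,
\[
 \det\left(\int_{X}f_{k}f_{l}\,\dd\nu\right)_{\!k,l=1}^{n}
 =\frac{1}{n!}\int_{X^{n}}\det\bigl(f_{k}(t_{j})\bigr)_{j,k=1}^{n}\,\det\bigl(f_{l}(t_{j})\bigr)_{j,l=1}^{n}\,\dd\nu(t_{1})\cdots\dd\nu(t_{n}),
\]
which is obtained by expanding one determinant by multilinearity (renaming the integration variable column by column) and then symmetrizing the integrand over the $n!$ permutations of $t_{1},\dots,t_{n}$, which is legitimate because $\nu^{\otimes n}$ is permutation-invariant. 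Applying this with $X=[-\pi,\pi]$, $\dd\nu=\dd t/(2\pi)$, and the $f_{k}$ above, the two determinants under the integral coincide and each is a Vandermonde determinant in the nodes $x_{j}:=b(e^{\ii t_{j}})$:
\[
 \det\bigl(b(e^{\ii t_{j}})^{k-1}\bigr)_{j,k=1}^{n}=\prod_{1\leq i<j\leq n}\bigl(b(e^{\ii t_{j}})-b(e^{\ii t_{i}})\bigr).
\]
Substituting this into Andr\'eief's identity yields exactly~\eqref{eq:det_H_n_integral_form}, the factor $(2\pi)^{-n}$ arising from the $n$ copies of the normalized measure.

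For the second assertion, assume $b$ has real coefficients as in~\eqref{eq:def_b_real}. Then the substitution $t\mapsto-t$ together with $\overline{b(e^{\ii t})}=b(e^{-\ii t})$ shows that each $h_{m}$, and hence $\det H_{n}$, is real (equivalently, that the integrand in~\eqref{eq:det_H_n_integral_form} has real integral, although it need not be pointwise real-valued). If moreover $b\in\mathscr{R}$, then Corollary~\ref{cor:b_in_R_impl_PD_Hankel} asserts that $H_{n}$ is positive definite for every $n\in\N$, so $\det H_{n}>0$; multiplying~\eqref{eq:det_H_n_integral_form} by $(2\pi)^{n}n!$ gives~\eqref{eq:int_crit_pos}. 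The only delicate point is the bookkeeping in the symmetrization step of Andr\'eief's identity --- tracking the sign produced when the rows of the Vandermonde matrix are permuted, so that it cancels against the sign of the second determinant and produces the square --- but this is entirely routine, and there is no substantive obstacle in this proof.
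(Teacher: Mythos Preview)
Your proof is correct and follows essentially the same route as the paper's. The paper does not cite Andr\'eief's identity by name but instead derives it in situ: it expands $\det H_{n}$ using the Leibniz formula to obtain an $n$-fold integral whose integrand factors as a diagonal term times one Vandermonde determinant, and then applies exactly the symmetrization trick you allude to (summing over permutations of the integration variables) to produce the second Vandermonde factor and the $1/n!$. Your version is simply the packaged form of the same argument, and the second assertion is handled identically via Corollary~\ref{cor:b_in_R_impl_PD_Hankel}.
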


\begin{proof}
 Let $n\in\N$ be fixed. First, by using the definition of the determinant, we get 
 \[
  \det H_{n}=\frac{1}{(2\pi)^{n}}\int_{\pi}^{\pi}\int_{-\pi}^{\pi}\dots\int_{-\pi}^{\pi}\det B_{n} \,\dd t_{1}\dd t_{2}\dots\dd t_{n},
 \]
 where $B_{n}=D_{n}V_{n}$, $D_{n}$ is the diagonal matrix with the entries
 \[
  \left(D_{n}\right)_{j,j}=b^{j-1}\left(e^{\ii t_{j}}\right), \quad 1\leq j\leq n,
 \]
 and $V_{n}$ is the Vandermonde matrix with the entries
 \[
  \left(V_{n}\right)_{i,j}=b^{j-1}\left(e^{\ii t_{i}}\right), \quad 1\leq i,j\leq n.
 \]
 By applying the well-known formula for the determinant of the Vandermonde matrix, one arrives at the equation
 \begin{equation}
 \det H_{n}=\frac{1}{(2\pi)^{n}}\int_{\pi}^{\pi}\int_{-\pi}^{\pi}\dots\int_{-\pi}^{\pi}
  \left[\prod_{j=1}^{n}b^{j-1}\left(e^{\ii t_{j}}\right)\right]\!
  \left[\prod_{1\leq i<j\leq n}\left(b\left(e^{\ii t_{j}}\right)-b\left(e^{\ii t_{i}}\right)\right)\right]\!
  \dd t_{1}\dd t_{2}\dots\dd t_{n}.
 \label{eq:det_H_n_first_in_proof}
 \end{equation}

 Second, we apply a symmetrization trick to the identity~\eqref{eq:det_H_n_first_in_proof}. Let $\sigma$ be a permutation of the set $\{1,2,\dots,n\}$.
 Note that the second term in the square brackets in~\eqref{eq:det_H_n_first_in_proof}, i.e., the Vandermonde determinant, is antisymmetric as a function
 of variables $t_{1},t_{2},\dots,t_{n}$. Thus, if we change variables in~\eqref{eq:det_H_n_first_in_proof} so that $t_{j}=s_{\sigma(j)}$, we obtain
 \[
  \det H_{n}=\frac{\sign\sigma}{(2\pi)^{n}}\int_{\pi}^{\pi}\dots\int_{-\pi}^{\pi}
  \left[\prod_{j=1}^{n}b^{j-1}\left(e^{\ii s_{\sigma(j)}}\right)\right]\!
  \left[\prod_{1\leq i<j\leq n}\left(b\left(e^{\ii s_{j}}\right)-b\left(e^{\ii s_{i}}\right)\right)\right]\!
  \dd s_{1}\dots\dd s_{n}.
 \]
 Now, it suffices to divide both sides of the above equality by $n!$, sum up over all permutations~$\sigma$, and recognize one more Vandermonde determinant on the right-hand side.
 This yields~\eqref{eq:det_H_n_integral_form}. The second statement of the theorem follows immediately from Corollary~\ref{cor:b_in_R_impl_PD_Hankel} and the already
 proven formula~\eqref{eq:det_H_n_integral_form}.
\end{proof}

\begin{rem}
 Note that
 \[
  B_{n}(z_{1},\dots,z_{n}):=\prod_{1\leq i<j\leq n}\left(b\left(z_{j}\right)-b\left(z_{i}\right)\right)^{2}\in\C\!\left[z_{1},z_{1}^{-1},\dots,z_{n},z_{n}^{-1}\right]\!,
 \]
 i.e., $B_{n}(z_{1},\dots,z_{n})$ is a Laurent polynomial in the indeterminates $z_{1},\dots,z_{n}$. The condition~\eqref{eq:int_crit_pos} tells us that the constant term of 
 $B_{n}(z_{1},\dots,z_{n})$ has to be positive for all $n\in\mathbb{N}$. Consequently, the inequalities~\eqref{eq:int_crit_pos} yield a necessary condition for the symbol $b$
 of the form~\eqref{eq:def_b} to belong to the class $\mathscr{R}$. In principle, these conditions can be formulated as an infinite number of inequalities in terms of the coefficients
 of $b$ (though in a very complicated form).
\end{rem}

We finish this subsection with a discussion on a possible converse of the implication from Corollary~\ref{cor:b_in_R_impl_PD_Hankel}. It is not clear now, whether, for
the symbol $b$ of the form~\eqref{eq:def_b_real}, the positive definiteness of all the Hankel matrices $H_{n}$ is a sufficient condition for $b$ to belong to $\mathscr{R}$.
If $\det H_{n}>0$ for all $n\in\N$, then the Hamburger moment problem with the moment sequence~\eqref{eq:def_h_m_b} has a solution, say $\nu$, which is unique. Indeed, the uniqueness
follows from the fact that the moment sequence $h_{m}$ does not grow too rapidly as $m\to\infty$, see \cite[Prop.~1.5]{simon_am98}. To see this, one has to realize that
the spectral radius of $T_{n}(b)$ is majorized by the (spectral) norm of $T_{n}(b)$. This norm can be estimated from above as
\begin{equation}
 \|T_{n}(b)\|\leq\sum_{k=-r}^{s}|a_{k}|=:R.
\label{eq:def_R_norm_est}
\end{equation}
Now, taking into account~\eqref{eq:szego}, one observes that the moment sequence $h_{m}$ grows at most geometrically since
\[
 |h_{m}|\leq\limsup_{n\to\infty}\frac{1}{n}\sum_{k=1}^{n}|\lambda_{k,n}|^{m}\leq R^{m},
\]
where $\lambda_{1,n},\dots,\lambda_{n,n}$ stands for the eigenvalues of $T_{n}(b)$ counted repeatedly according to their algebraic multiplicity.

Similarly as in the proof of Lemma~\ref{lem:b_in_R_sol_Hmp}, one verifies that the $m$-th moment of the limiting measure $\mu$ equals $h_{m}$. Hence, assuming $\det H_{n}>0$ for all $n\in\N$, 
both measures $\mu$ and $\nu$ have the same moments. This implies that the Cauchy transforms $C_{\mu}$ and $C_{\nu}$ of measures $\mu$ and $\nu$, respectively, coincide on a neighborhood of 
$\infty$ since
\[
 C_{\mu}(z)=\int_{\C}\frac{\dd\mu(x)}{x-z}=-\sum_{m=0}^{\infty}\frac{1}{z^{m+1}}\int_{\C}x^{m}\dd\mu(x)=-\sum_{m=0}^{\infty}\frac{h_{m}}{z^{m+1}}=\int_{\R}\frac{\dd\nu(x)}{x-z}=C_{\nu}(z).
\]
It can be shown that the equality $C_{\mu}(z)=C_{\nu}(z)$ hold for all $z\in\C$, $|z|>R$, with $R$ as in~\eqref{eq:def_R_norm_est}. This, however, does not imply $\mu=\nu$.

The measures $\mu$ and $\nu$ would coincide, if $C_{\mu}(z)=C_{\nu}(z)$ for almost every $z\in\C$ (with respect to the Lebesgue measure in $\C$). This can be obtained by imposing
an additional assumption on $\Lambda(b)$ requiring its complement to be connected. Indeed, since the Cauchy transform of a measure is analytic outside the support of the measure,
the equality $C_{\mu}(z)=C_{\nu}(z)$ can be extended to all $z\notin\Lambda(b)\cup\supp\nu$ by analyticity provided that $\C\setminus\Lambda(b)$ is connected. Clearly, both
sets $\Lambda(b)$ and $\supp\nu$ have zero Lebesgue measure and therefore $\mu=\nu$.

It follows from the above discussion that, if there exists $b$ of the form~\eqref{eq:def_b} such that $\det H_{n}>0$, for all $n\in\N$, and $b\notin\mathscr{R}$, then the set
$\C\setminus\Lambda(b)$ has to be disconnected. Let us stress that it is by no means clear for which symbols $b$ the set $\C\setminus\Lambda(b)$ is connected. To our best knowledge,
the only exception corresponds to symbols $b$ which are trinomials for which the set $\C\setminus\Lambda(b)$ is known to be connected as pointed out in \cite{schmidt_ms60}.
On the other hand, the relatively simple examples of $b$ where $\Lambda(b)$ separates the plane $\C$ are known, see \cite[Prop.~5.2]{bottcher_laa02}. One might believe that, if $T_{n}(b)$ is a lower (or upper)
Hessenberg matrix, i.e, $r=1$ (or $s=1$) in~\eqref{eq:def_b_real}, then $\C\setminus\Lambda(b)$ is connected. It seems that it is not the case neither. It is not our intention to prove it analytically;
we provide only a numerical evidence given by Figure~\ref{fig:separ}.

\section{Associated Jacobi operator and orthogonal polynomials}\label{sec:Jac_op}

In Lemma~\ref{lem:b_in_R_sol_Hmp}, we have observed that, for $b\in\mathscr{R}$ of the form~\eqref{eq:def_b_real}, the limiting measure is a solution of the Hamburger moment problem.
It is well known that the Hamburger moment problem is closely related with Jacobi operators and orthogonal polynomials \cite{akhiezer90}. The aim of this section is to investigate
spectral properties of the Jacobi operator whose spectral measure coincides with the limiting measure $\mu$ provided that $b\in\mathscr{R}$. For the general theory of Jacobi operators,
we refer the reader to \cite{teschl00}.

Recall that $\ell^{2}(\N)$ is the Hilbert space of square summable sequences indexed by~$\N$ endowed with the standard inner product denoted here by $\langle\cdot,\cdot\rangle$,
and $\{e_{k} \mid k\in\N\}$ stands for the canonical basis of $\ell^{2}(\N)$.

\begin{thm}\label{thm:Jac_op_rel_b_in_R}
 For every $b\in\mathscr{R}$ of the form~\eqref{eq:def_b_real}, there exists a bounded self-adjoint Jacobi operator $J(b)$ acting on $\ell^{2}(\N)$ such that
 its spectral measure $E_{J(b)}$ satisfies
 \begin{equation}
  \mu=\langle e_{1}, E_{J(b)}e_{1} \rangle,
 \label{eq:mu_eq_spec_meas}
 \end{equation}
 where $\mu$ is the limiting measure of $T(b)$; in particular, $\spec(J(b))=\Lambda(b)$. Moreover, for $\lambda\notin\Lambda(b)$, 
 the Weyl $m$-function $m_{b}(\lambda):=\langle e_{1},(J(b)-\lambda)^{-1}e_{1}\rangle$ of $J(b)$ satisfies the equation
 \begin{equation}
  m_{b}(\lambda)=\frac{1}{2\pi}\int_{0}^{2\pi}\frac{\dd\theta}{b\left(\rho(\lambda)e^{\ii\theta}\right)-\lambda},
  \label{eq:weyl_m_b}
 \end{equation}
 where $\rho(\lambda)>0$ is an arbitrary number such that $|z_{r}(\lambda)|<\rho(\lambda)<|z_{r+1}(\lambda)|$.
\end{thm}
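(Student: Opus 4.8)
The plan is to construct $J(b)$ explicitly from the moment data and then identify its Weyl function. By Lemma~\ref{lem:b_in_R_sol_Hmp}, the limiting measure $\mu$ is the unique solution of a determinate Hamburger moment problem with compactly supported measure; hence the classical theory of orthogonal polynomials applies. First I would apply the Gram--Schmidt procedure to the monomials $1,x,x^{2},\dots$ in $L^{2}(\R,\dd\mu)$ to obtain the orthonormal polynomials $p_{0},p_{1},p_{2},\dots$, which satisfy a three-term recurrence
\[
 x p_{k}(x)=b_{k+1}p_{k+1}(x)+a_{k+1}p_{k}(x)+b_{k}p_{k-1}(x),
\]
with real $a_{k}$ and $b_{k}>0$. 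Define $J(b)$ to be the Jacobi operator on $\ell^{2}(\N)$ with diagonal $(a_{k})$ and off-diagonal $(b_{k})$; since $\supp\mu=\Lambda(b)$ is bounded, the coefficients are bounded and $J(b)$ is bounded and self-adjoint. The map $U:p_{k-1}\mapsto e_{k}$ extends to a unitary $L^{2}(\R,\dd\mu)\to\ell^{2}(\N)$ intertwining multiplication by $x$ with $J(b)$, which gives both $\mu=\langle e_{1},E_{J(b)}e_{1}\rangle$ and $\spec(J(b))=\supp\mu=\Lambda(b)$. From the spectral theorem, for $\lambda\notin\Lambda(b)$,
\[
 m_{b}(\lambda)=\langle e_{1},(J(b)-\lambda)^{-1}e_{1}\rangle=\int_{\R}\frac{\dd\mu(x)}{x-\lambda}=-C_{\mu}(\lambda),
\]
so it remains only to evaluate this Cauchy transform via the contour-integral representation of $\mu$.

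For the formula~\eqref{eq:weyl_m_b}, the key point is that, by the same deformation-of-contour argument used in the proof of Lemma~\ref{lem:b_in_R_sol_Hmp} and in Theorem~\ref{thm:main_suff_generalized}, any circle $|z|=\rho$ with $|z_{r}(\lambda)|<\rho<|z_{r+1}(\lambda)|$ is homotopic, within $\C\setminus\{0\}$ and avoiding the roots of $b(\cdot)-\lambda$, to the unit circle; hence it can be used in place of $\mathbb{T}$ to compute the moments $h_{m}$. Concretely, I would start from the generating function identity
\[
 -C_{\mu}(\lambda)=\sum_{m=0}^{\infty}\frac{h_{m}}{\lambda^{m+1}}
 =\sum_{m=0}^{\infty}\frac{1}{\lambda^{m+1}}\cdot\frac{1}{2\pi\ii}\oint_{|z|=\rho(\lambda)}\frac{b^{m}(z)}{z}\dd z,
\]
valid for $|\lambda|$ large. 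Interchanging sum and integral (justified because on $|z|=\rho(\lambda)$ one has $|b(z)|$ bounded away from $|\lambda|$ — indeed $b$ takes no value $\lambda$ there since all roots of $b(\cdot)-\lambda$ lie strictly inside or outside this circle) and summing the geometric series gives
\[
 -C_{\mu}(\lambda)=\frac{1}{2\pi\ii}\oint_{|z|=\rho(\lambda)}\frac{1}{\lambda-b(z)}\cdot\frac{\dd z}{z}
 =\frac{1}{2\pi}\int_{0}^{2\pi}\frac{\dd\theta}{b(\rho(\lambda)e^{\ii\theta})-\lambda},
\]
which is~\eqref{eq:weyl_m_b}. Finally I would remove the restriction to large $|\lambda|$: both sides are analytic in $\lambda$ on the connected set where $|z_{r}(\lambda)|<\rho<|z_{r+1}(\lambda)|$ is achievable — that is, on $\C\setminus\Lambda(b)$ — so the identity propagates there by the identity theorem, the right-hand side being independent of the particular admissible choice of $\rho(\lambda)$ because varying $\rho$ within $(|z_{r}(\lambda)|,|z_{r+1}(\lambda)|)$ does not cross any pole of the integrand.

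The main obstacle is the careful justification of the contour manipulations: namely, that the annulus $|z_{r}(\lambda)|<|z|<|z_{r+1}(\lambda)|$ is nonempty precisely for $\lambda\notin\Lambda(b)$ (immediate from the Schmidt--Spitzer description), that on a circle inside this annulus the function $b(z)-\lambda$ is nonvanishing so the integrand is continuous, and that the series interchange is legitimate. A secondary point needing a line of care is analytic continuation in $\lambda$: one must check that $\C\setminus\Lambda(b)$ is connected enough for the propagation — but since $m_{b}$ and the integral are both analytic on each connected component of $\C\setminus\Lambda(b)$ and agree near $\infty$, and $\Lambda(b)$ is a compact connected subset of $\R$ (hence a closed interval, with connected complement in $\C$), this is clean. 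Everything else — existence of the orthonormal polynomials, boundedness and self-adjointness of $J(b)$, the spectral identification — is standard moment-problem theory and can be cited from \cite{akhiezer90} or \cite{teschl00}.
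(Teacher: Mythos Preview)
Your construction of $J(b)$ is essentially the paper's: both invoke the standard correspondence between a determinate Hamburger moment problem with compactly supported solution and a bounded self-adjoint Jacobi operator, and both identify $\mu$ with $\langle e_{1},E_{J(b)}e_{1}\rangle$ via Lemma~\ref{lem:b_in_R_sol_Hmp}.

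For the Weyl $m$-function formula~\eqref{eq:weyl_m_b}, your route is genuinely different from the paper's. The paper proceeds by citing two external identities: a logarithmic integral formula from \cite[Lem.~11.11]{bottcher05}, which after differentiation in $\lambda$ gives
\[
 \frac{1}{2\pi}\int_{0}^{2\pi}\frac{\dd\theta}{b(\rho e^{\ii\theta})-\lambda}=-\sum_{j=r+1}^{r+s}\frac{z_{j}'(\lambda)}{z_{j}(\lambda)},
\]
and a formula from \cite[Prop.~4.2]{duitskuijlaars_siamjmaa08} expressing the Cauchy transform of $\mu$ as $\sum_{j=1}^{r}z_{j}'(\lambda)/z_{j}(\lambda)$; it then matches the two via the Vieta identity $\prod_{j}z_{j}(\lambda)=(-1)^{r+s}a_{-r}/a_{s}$. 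Your argument instead expands $C_{\mu}$ as a moment series, rewrites each $h_{m}$ as a contour integral over $|z|=\rho$, sums the geometric series, and analytically continues from a neighborhood of $\infty$ to all of $\C\setminus\Lambda(b)$ (which is connected, $\Lambda(b)$ being a real interval). This is more self-contained---it avoids both external citations---while the paper's approach has the merit of exhibiting the link to the individual roots $z_{j}(\lambda)$.

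One small correction: your parenthetical justification for the sum--integral interchange (``$|b(z)|$ bounded away from $|\lambda|$'') is the condition for the \emph{final} integrand to be continuous, not for the geometric series $\sum_{m}b(z)^{m}/\lambda^{m+1}$ to converge; the latter needs $\sup_{|z|=\rho}|b(z)|<|\lambda|$. Since you explicitly restrict first to $|\lambda|$ large (where one may take $\rho=1$, say, and this bound holds) and then extend by analytic continuation, the argument is sound---just tighten that sentence.
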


\begin{proof}
 By the general theory, for every determinate Hamburger moment problem with a moment sequence $\{h_{m}\}_{m=0}^{\infty}$, $h_{0}=1$, there exists a self-adjoint 
 Jacobi operator $J$ determined uniquely by its diagonal and off-diagonal sequence: $b_{n}=J_{n,n}$ and $a_{n}=J_{n,n+1}=J_{n+1,n}$, $n\in\N$, see \cite[Chp.~4]{akhiezer90}. 
 The sequences $\{a_{n}\}_{n\geq1}$, $\{b_{n}\}_{n\geq1}$ are expressible in terms of the moment sequence $\{h_{m}\}_{m=0}^{\infty}$ by the formulas \cite[Eq.~(2.118)]{teschl00}
 \begin{equation}
  a_{1}=\frac{\sqrt{\det H_{2}}}{\det H_{1}}=\sqrt{h_{2}-h_{1}^{2}}, \quad a_{n}=\frac{\sqrt{\det H_{n-1}\det H_{n+1}}}{\det H_{n}},\quad n\geq2,
 \label{eq:a_n_det_Hankel}
 \end{equation}
 and
 \begin{equation}
 b_{1}=\frac{\det\tilde{H}_{1}}{\det H_{1}}=h_{1}, \quad b_{n}=\frac{\det\tilde{H}_{n}}{\det H_{n}}-\frac{\det\tilde{H}_{n-1}}{\det H_{n-1}}, \quad n\geq2,
 \label{eq:b_n_det_Hankel}
 \end{equation}
 where $\tilde{H}_{n}$ is obtained from $H_{n+1}$ by deleting its $n$th row and its $(n+1)$st column. The projection-valued spectral measure $E_{J}$ of the self-adjoint operator $J$ determines
 the measure $\mu=\langle e_{1}, E_{J}e_{1} \rangle$ which coincides with the solution of the determinate Hamburger moment problem with the moment sequence $\{h_{m}\}_{m=0}^{\infty}$.
 Moreover, $\spec(J)=\supp\mu$ and therefore $\supp\mu$ is bounded if and only if $J$ is a bounded operator. These facts together with Lemma~\ref{lem:b_in_R_sol_Hmp} and the equality 
 $\supp\mu=\Lambda(b)$ yield all the claims of the statement  except for the formula~\eqref{eq:weyl_m_b}.
 
 Let us now derive the expression~\eqref{eq:weyl_m_b} for the Weyl $m$-function of $J(b)$.
 By expressing the resolvent operator of $J(b)$ in terms of its spectral measure, one gets
 \begin{equation}
  \langle e_{1}, (J(b)-\lambda)^{-1}e_{1} \rangle=\int_{\R}\frac{\dd\mu(x)}{x-\lambda}, \quad \lambda\notin\spec(J(b)).
 \label{eq:weil_m_func_calc}
 \end{equation}
 In other words, the Weyl $m$-function of $J(b)$ is nothing else but the Cauchy transform of the limiting measure $\mu$. 
 
 Let $\lambda\in\C\setminus\Lambda(b)$ be fixed. Take an arbitrary $\rho=\rho(\lambda)>0$ such that $|z_{r}(\lambda)|<\rho<|z_{r+1}(\lambda)|$.
 In the course of  the proof of \cite[Lem.~11.11]{bottcher05}, it was shown that
 \[
  \frac{1}{2\pi}\int_{0}^{2\pi}\log\left(b\left(\rho e^{\ii\theta}\right)-\lambda\right)\dd\theta=2m\pi\ii+\log a_{s}+\sum_{j=r+1}^{r+s}\log\left(-z_{j}(\lambda)\right)\!,
 \]
 where $m$ is an integer. Differentiating the above equation with respect to $\lambda$, one obtains 
 \begin{equation}
  \frac{1}{2\pi}\int_{0}^{2\pi}\frac{\dd\theta}{b\left(\rho e^{\ii\theta}\right)-\lambda}=-\sum_{j=r+1}^{r+s}\frac{z_{j}'(\lambda)}{z_{j}(\lambda)}.
 \label{eq:aux_in_proof_1}
 \end{equation}
 On the other hand, it is also well-known that
 \[
  \int_{\Lambda(b)}\frac{\dd\mu(x)}{x-\lambda}=\sum_{j=1}^{r}\frac{z_{j}'(\lambda)}{z_{j}(\lambda)}, \quad \lambda\notin\Lambda(b),
 \]
 see, e.g., \cite[Prop.~4.2]{duitskuijlaars_siamjmaa08}. By using the above formula together with the identity
 \[
  \sum_{j=1}^{r}\frac{z_{j}'(\lambda)}{z_{j}(\lambda)}=-\sum_{j=r+1}^{r+s}\frac{z_{j}'(\lambda)}{z_{j}(\lambda)},
 \]
 which, in its turn, is obtained by the logarithmic differentiation of the equation
 \[
  \prod_{j=1}^{r+s}z_{j}(\lambda)=(-1)^{r+s}\frac{a_{-r}}{a_{s}}
 \]
 expressing the relation between the roots and the leading and constant coefficients of the polynomial $z^{r}(b(z)-\lambda)$, we obtain 
  \begin{equation}
  \int_{\Lambda(b)}\frac{\dd\mu(x)}{x-\lambda}=-\sum_{j=r+1}^{r+s}\frac{z_{j}'(\lambda)}{z_{j}(\lambda)}, \quad \lambda\notin\Lambda(b).
 \label{eq:aux_in_proof_2}
 \end{equation}
 Combining \eqref{eq:aux_in_proof_1} and \eqref{eq:aux_in_proof_2}, one arrives at the equality
 \[
  \frac{1}{2\pi}\int_{0}^{2\pi}\frac{\dd\theta}{b\left(\rho e^{\ii\theta}\right)-\lambda}=\int_{\Lambda(b)}\frac{\dd\mu(x)}{x-\lambda},
 \]
 which, together with~\eqref{eq:weil_m_func_calc}, yields~\eqref{eq:weyl_m_b}.
\end{proof}

\begin{rem}
Recall that the density of $\mu$ can be recovered from $m_{b}$ by using the Stieltjes--Perron inversion formula, see, for example, \cite[Chp.~2 and Append.~B]{teschl00}. 
Namely, one has
\begin{equation}
 \frac{\dd\mu(x)}{\dd x}=\frac{1}{\pi}\lim_{\epsilon\to0+}\Im m_{b}(x+\ii\epsilon),
\label{eq:weyl-m_stieltjes_inversion}
\end{equation}
for all $x\in\Lambda(b)$ which are not exceptional points.
\end{rem}

\begin{rem}
  Let us now apply the close relation between the spectral properties of Jacobi operators and the orthogonal polynomials. Theorem~\ref{thm:Jac_op_rel_b_in_R} tells 
 us that for any $b\in\mathscr{R}$ of the form~\eqref{eq:def_b_real}, there exists a family of orthogonal polynomials $\{p_{n}\}_{n=0}^{\infty}$, determined by the three-term recurrence
 \begin{equation}
  p_{n+1}(x)=(x-b_{n+1})p_{n}(x)-a_{n}^{2}p_{n-1}(x), \quad n\in\N_{0},
 \label{eq:og_pol_recur}
 \end{equation}
 with the initial conditions $p_{-1}(x)=0$ and $p_{0}(x)=1$, where coefficients $\{a_{n}\}_{n=1}^{\infty}$ and $\{b_{n}\}_{n=1}^{\infty}$ are given by~\eqref{eq:a_n_det_Hankel} and~\eqref{eq:b_n_det_Hankel} 
 ($a_{0}$ is arbitrary). This family satisfies the orthogonality relation
 \begin{equation}
  \int_{\alpha}^{\beta}p_{n}(x)p_{m}(x)\rho(x)\dd x=\frac{\det H_{n+1}}{\det H_{n}}\delta_{m,n}, \quad m,n\in\N_{0},
 \label{eq:og_rel}
 \end{equation}
 where $\rho$ stands for the density of the limiting measure $\mu$ supported on $\Lambda(b)=[\alpha,\beta]$. For $n=0$, one has to set $\det H_{0}:=1$ in~\eqref{eq:og_rel}.  
 Particular examples of these families of polynomials are examined in Section~\ref{sec:ex_numer}.
\end{rem}

The map  sending $b\in\mathscr{R}$ to the  corresponding Jacobi parameters $\{a_{n}\}_{n=1}^{\infty}$ and $\{b_{n}\}_{n=1}^{\infty}$ is interesting; however, 
it is unlikely that in a concrete situation,  one can use  general formulas \eqref{eq:a_n_det_Hankel} and \eqref{eq:b_n_det_Hankel} 
to obtain the diagonal and off-diagonal sequences of~$J(b)$ explicitly. Consequently, it is desirable at least to know  the asymptotic behavior of $a_{n}$ and $b_{n}$,
as $n\to\infty$.

\begin{thm}\label{thm:compact}
 Let $b\in\mathscr{R}$ of the form~\eqref{eq:def_b_real} and $\Lambda(b)=[\alpha,\beta]$. Then $J(b)$ is a compact perturbation of the Jacobi matrix 
 with the constant diagonal sequence  $(\alpha+\beta)/2$ and the constant off-diagonal sequence  
 $(\beta-\alpha)/4$, i.e.,
 \[
  \lim_{n\to\infty}a_{n}=\frac{\beta-\alpha}{4} \quad \mbox{ and } \quad \lim_{n\to\infty}b_{n}=\frac{\alpha+\beta}{2}.
 \]
\end{thm}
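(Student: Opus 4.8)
The plan is to deduce Theorem~\ref{thm:compact} from the Denisov--Rakhmanov theorem on the asymptotics of the recurrence coefficients of measures whose (essential) support is an interval and whose absolutely continuous part is almost everywhere positive there.

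First I would collect what is already known about $\mu$. By Theorem~\ref{thm:Jac_op_rel_b_in_R}, $\mu=\langle e_{1},E_{J(b)}e_{1}\rangle$, $\supp\mu=\Lambda(b)=[\alpha,\beta]$, and $J(b)$ is precisely the bounded self-adjoint Jacobi matrix of the orthonormal polynomials associated with $\mu$, with diagonal $\{b_{n}\}$ and off-diagonal $\{a_{n}\}$ given by~\eqref{eq:a_n_det_Hankel}--\eqref{eq:b_n_det_Hankel}. Next I would record that $\mu$ is purely absolutely continuous with respect to Lebesgue measure on $[\alpha,\beta]$ (Hirschman's result quoted in the Introduction) and that $\Lambda(b)$ is a union of finitely many open analytic subarcs of $\R$ together with finitely many exceptional points, on each of which the density $\mu'$ is real-analytic. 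Since $\mu'\geq0$ is real-analytic on each such arc and cannot vanish identically there (otherwise that arc would receive no $\mu$-mass, contradicting $\supp\mu=[\alpha,\beta]$), $\mu'$ has only isolated zeros in $[\alpha,\beta]$; hence $\mu'(x)>0$ for a.e.\ $x\in[\alpha,\beta]$.

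The normalization step is routine. Let $\ell(x):=(2x-\alpha-\beta)/(\beta-\alpha)$, an affine bijection of $[\alpha,\beta]$ onto $[-1,1]$, and let $\tilde\mu:=\mu\circ\ell^{-1}$ be the pushforward. Then $\tilde\mu$ is a probability measure, purely absolutely continuous, $\supp\tilde\mu=[-1,1]$, and $\tilde\mu'>0$ a.e.\ on $[-1,1]$. By the Denisov--Rakhmanov theorem (already Rakhmanov's original version suffices here, since $\tilde\mu$ has no singular part), the recurrence coefficients $\tilde a_{n},\tilde b_{n}$ of $\tilde\mu$ satisfy $\tilde a_{n}\to1/2$ and $\tilde b_{n}\to0$ as $n\to\infty$. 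The unitary $L^{2}(\mu)\to L^{2}(\tilde\mu)$ induced by $\ell$ intertwines multiplication by $\ell(x)$ on $L^{2}(\mu)$ with multiplication by the variable on $L^{2}(\tilde\mu)$, so in the corresponding orthonormal-polynomial bases the Jacobi matrices are related by $\tilde J=\tfrac{2}{\beta-\alpha}\bigl(J(b)-\tfrac{\alpha+\beta}{2}I\bigr)$. Comparing entries gives $a_{n}=\tfrac{\beta-\alpha}{2}\tilde a_{n}\to\tfrac{\beta-\alpha}{4}$ and $b_{n}=\tfrac{\beta-\alpha}{2}\tilde b_{n}+\tfrac{\alpha+\beta}{2}\to\tfrac{\alpha+\beta}{2}$. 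Finally, the difference between $J(b)$ and the tridiagonal Toeplitz matrix with diagonal $(\alpha+\beta)/2$ and off-diagonal $(\beta-\alpha)/4$ is a bounded tridiagonal operator all three of whose diagonal sequences converge to $0$, hence a norm limit of its finite-rank truncations and thus compact; this is the assertion of Theorem~\ref{thm:compact}.

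The only genuinely non-bookkeeping point is the verification of the ``Rakhmanov condition'' $\mu'>0$ a.e.\ on $\Lambda(b)$, which rests on the explicit analytic form of the limiting density together with $\Lambda(b)$ being the full interval $[\alpha,\beta]$ (a consequence of its compactness and connectedness). As an alternative to invoking Denisov--Rakhmanov, one could instead check that $\mu$ lies in the Szeg\H{o} class for $[\alpha,\beta]$ --- it suffices that $\log\mu'$ be bounded below on $[\alpha,\beta]$, which again follows from the structure of the density --- and then appeal to Szeg\H{o}'s classical theorem on recurrence coefficients; either route yields the same two limits.
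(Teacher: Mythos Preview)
Your argument is correct and follows essentially the same route as the paper: both verify the Rakhmanov hypothesis that the density of $\mu$ is a.e.\ positive on $[\alpha,\beta]$ and then invoke the Denisov--Rakhmanov (or Nevai) theorem to obtain the limits of $a_{n}$ and $b_{n}$. The only cosmetic differences are that the paper cites Hirschman's \cite[Cor.~4c]{hirschman_ijm67} directly for positivity (rather than your analyticity-plus-nonvanishing argument) and applies the theorem in a form already stated for a general interval, avoiding the explicit affine reduction to $[-1,1]$.
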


\begin{proof}
 The statement is a consequence of Rakhmanov's theorem \cite[Thm.~4]{denisov_pams04}; see also Thm.~1 in loc. cit. referring to a weaker older result due to P.~Nevai \cite{nevai_mams79}
 which is still sufficient for our purposes. The latter theorem implies that, if 
 \begin{enumerate}[{\upshape i)}]
  \item $J$ is a bounded self-adjoint Jacobi operator such that $\spec_{\text{ess}}(J)=[a,b]$,
  \item $\rho(x)>0$ for almost every $x\in[a,b]$, where $\rho$ denotes the density of the Lebesgue absolutely continuous component of the measure $\langle e_{1}, E_{J}e_{1}\rangle,$ 
 $E_{J}$ being the spectral measure of $J$, 
 \end{enumerate}
 then $J$ is a compact perturbation of the Jacobi operator with constant diagonal sequence  $(a+b)/2$ and constant off-diagonal sequence  $(b-a)/4$.

 In  case of $J(b)$, Theorem~\ref{thm:Jac_op_rel_b_in_R} implies that $\spec(J(b))=\Lambda(b)=[\alpha,\beta]$. In fact, since $\spec(J(b))$ does not contain 
 isolated points, the discrete part of the spectrum of the self-adjoint operator $J(b)$ is empty and hence  $\spec_{\text{ess}}(J(b))=[\alpha,\beta]$.
 Taking into account~\eqref{eq:mu_eq_spec_meas}, it suffices to show that the density  of the limiting measure $\mu$ is positive almost everywhere on $(\alpha,\beta)$.
 The latter fact is true since the density is positive on every analytic arc of $\Lambda(b)$, i.e., everywhere except possibly at the exceptional points which are finitely many, 
 see \cite[Cor.~4c]{hirschman_ijm67}.
\end{proof}

\begin{rem}
 Using the terminology of orthogonal polynomials, the statement of Theorem~\ref{thm:compact} says that the family of orthogonal polynomials $\{p_{n}\}_{n=0}^{\infty}$ determined by~\eqref{eq:og_pol_recur}
 belongs to the Blumenthal--Nevai class $M((\beta-\alpha)/2,(\alpha+\beta)/2)$, see \cite{nevai_mams79}.
\end{rem}

\section{Examples and numerical computations}\label{sec:ex_numer}

\subsection{Example 1 (tridiagonal case)}\label{subsec:ex1}

First, we take a look at the simplest nontrivial situation when $T(b)$ is a tridiagonal Toeplitz matrix. Since the entry on the main diagonal only causes a shift of the spectral parameter
and the matrix $T(b)$ itself can be scaled by a nonzero constant we may assume, without loss of generality, that $T(b)$ belongs to the one parameter family 
of tridiagonal Toeplitz matrices with the symbol
\[
 b(z)=\frac{1}{z}+az,
\]
where $a\in\C\setminus\{0\}$.

First, we decide for what parameter $a$, the symbol $b\in\mathscr{R}$. By Theorem~\ref{thm:summary}, $\spec T_{2}(b)\subset\R$ is a necessary condition
from which one easily deduces that $a>0$, if $b\in\mathscr{R}$. Next, for $a>0$, the symbol $b$ is real-valued on the circle $\gamma(t)=a^{-1/2}e^{\ii t}$,
$t\in[-\pi,\pi]$. Thus, by using Theorem~\ref{thm:summary} once more, we conclude that $b\in\mathscr{R}$ if and only if $a>0$.

One way to deduce the limiting measure $\mu$, for $a>0$, is to solve the corresponding Hamburger moment problem with the moment sequence \eqref{eq:def_h_m_b}.
A straightforward use of the binomial formula provides us with the constant term of the Laurent polynomial $b^{m}(z)$, for $m\in\N$, giving the identity
\begin{equation}
 h_{m}=\begin{cases}
        \binom{2k}{k}a^{k},& \quad \mbox{ if } m=2k,\\
        0,& \quad \mbox{ if } m=2k-1.\\
       \end{cases}
\label{eq:h_m_tridiag}
\end{equation}
Recall that the Wallis integral formula reads
\[
 \int_{0}^{\pi/2}\sin^{2k}\theta\dd\theta=\binom{2k}{k}\frac{\pi}{2^{2k+1}}, \quad k\in\N_{0}.
\]
Changing the variable in the above integral by $x=2\sqrt{a}\sin\theta$, one obtains
\[
 \frac{2}{\pi}\int_{0}^{2\sqrt{a}}\frac{x^{2k}\dd x}{\sqrt{4a-x^{2}}}=\binom{2k}{k}a^{k},
\]
or, equivalently,
\[
 \int_{-2\sqrt{a}}^{2\sqrt{a}}x^{2k}\rho(x)\dd x=\binom{2k}{k}a^{k}, \quad \forall k\in\N_{0},
\]
where 
\begin{equation}
 \rho(x):=\frac{1}{\pi\sqrt{4a-x^{2}}}, \quad x\in(-2\sqrt{a},2\sqrt{a}).
\label{eq:density_tridiag}
\end{equation}
Hence, taking also into account that $\rho$ is an even function, we have
\[
 \int_{-2\sqrt{a}}^{2\sqrt{a}}x^{m}\rho(x)\dd x=h_{m}, \quad \forall m\in\N_{0},
\]
with $h_{m}$ given by \eqref{eq:h_m_tridiag}. Consequently, the measure $\mu$ supported on $[-2\sqrt{a},2\sqrt{a}]$ with the above density $\rho$
is the unique solution of the Hamburger moment problem with the moment sequence~\eqref{eq:h_m_tridiag}. By Lemma~\ref{lem:b_in_R_sol_Hmp},
this measure is the desired limiting measure $\mu$. For the distribution function of $\mu$, one obtains
\[
 \mu\left([-2\sqrt{a},x)\right)=\frac{1}{2}+\frac{1}{\pi}\arcsin\left(\frac{x}{2\sqrt{a}}\right)\!, \quad x\in[-2\sqrt{a},2\sqrt{a}].
\]
This measure is well-known as the arcsine measure supported on the interval $[-2\sqrt{a},2\sqrt{a}]$. In particular, it is the so-called equilibrium measure of $[-2\sqrt{a},2\sqrt{a}]$, see e.g. \cite[Sec.~I.1]{saff97}.

An alternative way  to deduce $\mu$ is based on formula~\eqref{eq:weyl-m_stieltjes_inversion}. To obtain a suitable expression for the Weyl $m$-function, one can use  the generating 
function for the moments~\eqref{eq:h_m_tridiag} which reads as 
\[
 \sum_{m=0}^{\infty}h_{m}z^{m}=\sum_{k=0}^{\infty}\binom{2k}{k}a^{k}z^{2k}=\frac{1}{\sqrt{1-4az^{2}}}, \quad |z|<1/(2\sqrt{a}).
\]
This identity together with the von Neumann series expansion of the resolvent operator yields
\[
 m_{b}(z)=-\frac{1}{z}\sum_{m=0}^{\infty}\frac{h_{m}}{z^{m}}=-\frac{1}{\sqrt{z^{2}-4a}},
\]
for $z\in\C\setminus[-2\sqrt{a},2\sqrt{a}]$. By substituting the latter expression into \eqref{eq:weyl-m_stieltjes_inversion} and evaluating the limit, one rediscovers \eqref{eq:density_tridiag}.

Further, let us examine the structure of the Jacobi matrix $J(b)$ and the corresponding family of orthogonal polynomials.
First, since the density~\eqref{eq:density_tridiag} is an even function on the interval symmetric with respect to $0$, the 
diagonal sequence $\{b_{n}\}_{n=1}^{\infty}$ vanishes, as it follows, for example, from \cite[Thm.~4.2\ (c)]{chihara78}.
To compute the off-diagonal sequence $\{a_{n}\}_{n=1}^{\infty}$, we need to evaluate $\det H_{n}$ where $H_{n}$ is the Hankel
matrix \eqref{eq:hankel_matrix} with the entries given by moments~\eqref{eq:h_m_tridiag}. The evaluation of $\det H_{n}$ is
treated in Lemma~\ref{lem:det_H_n_tridiag} in the Appendix. By using Lemma~\ref{lem:det_H_n_tridiag} together with~\eqref{eq:a_n_det_Hankel}, 
one immediately gets $a_{1}=2\sqrt{a}$ and $a_{n}=\sqrt{a}$ for $n>1$. Thus, the self-adjoint Jacobi operator associated with $b$ has the matrix representation
\[
 J(b)=\begin{pmatrix}
        0 & 2\sqrt{a} \\
        2\sqrt{a} & 0 & \sqrt{a} \\
           & \sqrt{a} & 0 & \sqrt{a} \\
	   &   & \sqrt{a} & 0 & \sqrt{a} \\
	   &   &   & \ddots & \ddots & \ddots 
      \end{pmatrix}\!,
\]
where $a>0$.

The corresponding family of orthogonal polynomials $\{p_{n}\}_{n=0}^{\infty}$ generated by the recurrence~\eqref{eq:og_pol_recur}
(and initial conditions given therein) satisfies the orthogonality relation
\[
 \int_{-2\sqrt{a}}^{2\sqrt{a}}p_{n}(x)p_{m}(x)\frac{\dd x}{\sqrt{4a-x^{2}}}=\pi(2-\delta_{n,0})a^{n}\delta_{n,m}, \quad \forall m,n\in\N_{0},
\]
which one verifies by using~\eqref{eq:og_rel} together with the formula~\eqref{eq:density_tridiag} and Lemma~\ref{lem:det_H_n_tridiag}.
Polynomials $\{p_{n}\}_{n=0}^{\infty}$ do not belong to any family listed in the hypergeometric Askey scheme \cite{koekoek10}.
However, they can be written as the following linear combination of Chebyshev polynomials of the second kind,
\[
 p_{n}(x)=a^{n/2}\left(U_{n}\left(\frac{x}{2\sqrt{a}}\right)-3U_{n-2}\left(\frac{x}{2\sqrt{a}}\right)\right)\!,
\]
for $n\in\N$ (here $U_{-1}(x):=0$). The above equation and the hypergeometric representation of Chebyshev polynomials, see \cite[Eq.~(9.8.36)]{koekoek10}, 
can be used to obtain the explicit formula
\[
 p_{n}(x)=\sum_{k=0}^{\lfloor\frac{n}{2}\rfloor}(-1)^{k}\frac{(n+2k)(n-1-k)!}{k!(n-2k)!}a^{k}x^{n-2k}, \quad n\in\N,
\]
where $\lfloor y \rfloor$ denotes the greatest integer less or equal to a real number $y$.

\subsection{Example 2 (4-diagonal case)}\label{subsec:ex2}

Let us examine the case of symbols \eqref{eq:def_b} with $r=1$ and $s=2$. Without loss of generality, we can set $a_{-1}=1$ and $a_{0}=0$ which yields the symbol $b$  of the form
\begin{equation}
 b(z)=\frac{1}{z}+\alpha z+\beta z^{2},
\label{eq:def_b_4diag}
\end{equation}
where $\alpha\in\C$ and $\beta\in\C\setminus\{0\}$. 

First, we discuss for which parameters $\alpha$ and $\beta$ we have $b\in\mathscr{R}$. By Remark~\ref{rem:Hessenberg_real_coef}, if $b\in\mathscr{R}$, $\alpha$ and $\beta$ have to be real.
Further, according to Theorem~\ref{thm:summary}, if $b\in\mathscr{R}$, then $\spec T_{3}(b)\subset\R$.
The characteristic polynomial of $T_{3}(b)$ reads
\[
 \det(T_{3}(b)-z)=\beta+2\alpha z-z^{3}.
\]
By inspection of the discriminant of the above cubic polynomial with real coefficients, one concludes that its roots are real if and only if $\alpha^{3}\geq 27\beta^{2}$.

Next, we show that, for $\beta\in\R\setminus\{0\}$ and $\alpha^{3}\geq 27\beta^{2}$, $b^{-1}(\R)$ contains a Jordan curve. Note that, if $\alpha^{3}\geq 27\beta^{2}$,
the equation
\[
 z^{2}b'(z)=-1+\alpha z^{2}+2\beta z^{3}=0.
\]
have all roots non-zero real and they cannot degenerate to a triple root. Consequently, all critical points of $b$ are real. At the same time, these points are the intersection points of the net~$n_{b}$.
Taking into account that only one curve  in $n_{b}$ (the real line) passes through $0$, because $b(z)\sim 1/z$, as $z\to0$, and two curves (the extended real line and one more) pass through 
$\infty$, because $b(z)\sim \beta z^{2}$, as $z\to\infty$, one concludes that there has to be another arc in $b^{-1}(\R)$ passing through a real critical point of $b$. This arc necessarily
closes into a Jordan curve located in $b^{-1}(\R)$. Altogether, Theorem~\ref{thm:summary} implies that $b\in\mathscr{R}$ if and only if $\beta\in\R\setminus\{0\}$ and $\alpha^{3}\geq 27\beta^{2}$.

Next, we will derive the limiting measure $\mu$ explicitly in a special case when the symbol takes the form
\begin{equation}
 b(z)=\frac{1}{z}(1+az)^{3},
\label{eq:def_b_4diag_spec}
\end{equation}
where $a\in\R\setminus\{0\}$. This corresponds to $\alpha=3a^{2}$, $\beta=a^{3}$ in~\eqref{eq:def_b_4diag}, but we additionally add a real constant to $b$. Clearly, if we add a real constant to $b$,
the net $n_{b}$ does not change and $\Lambda(b)$ is just shifted by the constant. Hence, we may use the previous discussion to conclude that $b$, given by~\eqref{eq:def_b_4diag_spec}, belongs to the 
class $\mathscr{R}$ for all $a\in\R\setminus\{0\}$.

Note that the critical points of $b$ are $-1/a$ and $1/(2a)$, hence the measure $\mu$ is supported on the closed interval with the endpoints $b(-1/a)=0$ and $b(1/(2a))=27a/4$.
By using the binomial formula, one verifies that the constant term of $b^{m}(z)$ equals
\[
 h_{m}=\binom{3m}{m}a^{m}, \quad m\in\N_{0}.
\]
The generating function for the moments $h_{m}$ can be expressed in terms of the Gauss hypergeometric series as
\begin{equation}
 \sum_{m=0}^{\infty}h_{m}z^{m}={}_{2}F_{1}\!\left(\frac{1}{3},\frac{2}{3};\frac{1}{2};\frac{27}{4}az\right)\!.
\label{eq:gener_h_m_2F1_4diag}
\end{equation}

For the sake of simplicity of the forthcoming formulas and without loss of generality, we set $a=4/27$. 
We can apply the identity
 \begin{equation}
   {}_{2}F_{1}\!\left(c,1-c;\frac{1}{2};-z\right)=\frac{\left(\sqrt{1+z}+\sqrt{z}\right)^{2c-1}+\left(\sqrt{1+z}-\sqrt{z}\right)^{2c-1}}{2\sqrt{1+z}},
 \label{eq:id_2F1_prudnikov}
 \end{equation}
valid for $c\in(0,1)$ and $|z|<1$, see \cite[Eq.~7.3.3.4, p.~486]{prudnikov3_90}, to the right-hand side of~\eqref{eq:gener_h_m_2F1_4diag} with $c=2/3$ and deduce the formula for 
the Weyl $m$-function of the Jacobi operator $J(b)$ which reads
\[
 m_{b}(z)=-\frac{1}{z}\ {}_{2}F_{1}\!\left(\frac{1}{3},\frac{2}{3};\frac{1}{2};\frac{1}{z}\right)=-\frac{\left(\frac{\ii}{\sqrt{z}}+\sqrt{1-\frac{1}{z}}\right)^{1/3}+\left(-\frac{\ii}{\sqrt{z}}+\sqrt{1-\frac{1}{z}}\right)^{1/3}}{2\sqrt{z^{2}-z}},
\]
for $z\in\C\setminus[0,1]$. By evaluating the limit in~\eqref{eq:weyl-m_stieltjes_inversion}, one obtains 
\begin{equation}
 \frac{\dd\mu}{\dd x}(x)=\frac{\sqrt{3}}{4\pi}\frac{\left(1+\sqrt{1-x}\right)^{1/3}-\left(1-\sqrt{1-x}\right)^{1/3}}{x^{2/3}\sqrt{1-x}}, \quad x\in(0,1).
 \label{eq:density_4diag}
\end{equation}
This density appeared earlier in connection with Faber polynomials \cite{kuijlaars_jcam95}; see also \cite{coussementetal_tams08, duitskuijlaars_siamjmaa08}.

Next, we examine the operator $J(b)$ and the corresponding family of orthogonal polynomials in detail. First, we derive formulas for the diagonal sequence $\{b_{n}\}_{n=1}^{\infty}$
and off-diagonal sequence $\{a_{n}\}_{n=1}^{\infty}$ of $J(b)$. In \cite{egeciogluetal_dmtcsp08}, the Hankel determinant
\begin{equation}
 \det H_{n}=3^{n-1}\left(\prod_{i=0}^{n-1}\frac{(3i+1)(6i)!(2i)!}{(4i)!(4i+1)!}\right)a^{n(n-1)}
\label{eq:det_H_n_4diag}
\end{equation}
has been evaluated with $a=1$, see \cite[Eq.~(1)]{egeciogluetal_dmtcsp08}. The slightly more general identity~\eqref{eq:det_H_n_4diag} with the additional parameter $a$
can be justified by using the same argument as in the first paragraph of the proof of Lemma~\ref{lem:det_H_n_tridiag}. Moreover, the first equation from \cite[Eq.~(25)]{egeciogluetal_dmtcsp08} 
yields
\begin{equation} 
\det\tilde{H}_{n}=\frac{27n^{2}-8n-1}{2(4n-1)}\det H_{n}, \quad n\in\N,
\label{eq:det_tilde_H_n_4diag}
\end{equation}
(in the notation used in \cite[Eq.~(25)]{egeciogluetal_dmtcsp08}, $H_{1}\equiv H_{1}(n,0)$ coincides with $\det \tilde{H}_{n+1}$ and $H_{0}\equiv H_{0}(n,0)$ coincides with $\det H_{n+1}$).

By substituting the identities \eqref{eq:det_H_n_4diag} and \eqref{eq:det_tilde_H_n_4diag} in the general formulas \eqref{eq:a_n_det_Hankel} and \eqref{eq:b_n_det_Hankel}, one obtains
\[
 a_{1}^{2}=6a^{2} \; \mbox{ and } \; a_{k}^{2}=\frac{9(6k-5)(6k-1)(3k-1)(3k+1)}{4(4k-3)(4k-1)^{2}(4k+1)}a^{2}, \; \mbox{ for } k>1.
\]
and
\[
 b_{1}=3a \; \mbox{ and } \; b_{k}=\frac{3(36k^{2}-54k+13)}{2(4k-5)(4k-1)}a, \; \mbox{ for } k>1.
\]
The corresponding family of monic orthogonal polynomials generated by the recurrence~\eqref{eq:og_pol_recur}
is orthogonal with respect to the density~\eqref{eq:density_4diag} for $a=4/27$. Taking into account~\eqref{eq:og_rel} 
and the identity~\eqref{eq:det_H_n_4diag}, one gets the orthogonality relation
\[
 \int_{0}^{1}p_{n}(x)p_{m}(x)\rho(x)\dd x=(3-2\delta_{n,0})\left(\frac{4}{27}\right)^{\!2n}\frac{(3n+1)(6n)!(2n)!}{(4n)!(4n+1)!}\delta_{n,m}, \quad \forall m,n\in\N_{0}.
\]

The polynomial sequence $\{p_{n}\}_{n=0}^{\infty}$ does not coincide with any family listed in the Askey scheme \cite{koekoek10} either. Nevertheless, $p_{n}$ can be expressed as a linear combination
of the associated Jacobi polynomials introduced and studied by J.~Wimp in \cite{wimp_cjm87}. Following notation from \cite{wimp_cjm87}, the associated Jacobi polynomials $P_{n}^{(\alpha,\beta)}(x;c)$ constitute  
a three-parameter family of orthogonal polynomials generated by the same recurrence as the Jacobi polynomials $P_{n}^{(\alpha,\beta)}(x)$, but  every occurrence of $n$  in the 
coefficients of the recurrence relation defining the Jacobi polynomials is replaced by $n+c$, see \cite[Eq.~(12)]{wimp_cjm87}. Set
\[
 r_{n}^{(\alpha,\beta)}(x;c):=\frac{2^{n}(c+\alpha+\beta+1)_{n}(c+1)_{n}}{(2c+\alpha+\beta+1)_{2n}}P_{n}^{(\alpha,\beta)}(2x-1;c), \quad n\in\N_{0},
\]
and $r_{-1}^{(\alpha,\beta)}(x;c):=0$, where $(x)_{n}=x(x+1)\dots(x+n-1)$ is the Pochhammer symbol. Then, putting again $a=4/27$, one has
\begin{equation}
 2^{n}p_{n}(x)=r_{n}^{(\alpha,\beta)}(x;c)-\frac{4}{27}r_{n-1}^{(\alpha,\beta)}(x;c+1)-\frac{256}{729}r_{n-2}^{(\alpha,\beta)}(x;c+2), \quad n\in\N,
 \label{eq:p_n_rel_assoc_Jac}
\end{equation}
where $\alpha=1/2$, $\beta=-2/3$, and $c=-1/6$. Verification of~\eqref{eq:p_n_rel_assoc_Jac} is done in a completely routine way by showing that both sides
satisfy the same recurrence relation with the same initial conditions.

Finally, since Wimp derived the explicit formula for associated Jacobi polynomials in \cite[Theorem~1]{wimp_cjm87}, one can make use of this result together with~\eqref{eq:p_n_rel_assoc_Jac} 
to compute the explicit expression for $p_{n}(x)$. However, the computation is somewhat lengthy and the resulting formula is rather cumbersome; we omit the details and state only the final result
for the record. For $n\in\N_{0}$, one has
\[
 p_{n}(x)=A_{n}\sum_{k=0}^{n}B_{n}(k)C_{n}(k)x^{k},
\]
where
\[
 A_{n}=(-1)^{n}\frac{\left(\frac{1}{6}\right)_{n}\left(\frac{5}{6}\right)_{n}}{5\left(n+\frac{1}{2}\right)_{n}n!},
 \quad
 B_{n}(k)=\frac{\left(-n\right)_{k}\left(n+\frac{1}{2}\right)_{k}}{\left(\frac{1}{6}\right)_{k}\left(\frac{5}{6}\right)_{k}},
\]
and
\[
 C_{n}(k)=\sum_{i=0}^{n-k}\frac{\left(k-n\right)_{i}\left(n+k+\frac{1}{2}\right)_{i}\left(-\frac{5}{6}\right)_{i}\left(-\frac{1}{6}\right)_{i}}
 {\left(-\frac{1}{2}\right)_{i}\left(k+\frac{1}{6}\right)_{i}\left(k+\frac{5}{6}\right)_{i}}
 \frac{(6i-5)(18i+1)}{(2i-1)(2i+1)}.
\]

\subsection{Example 3}\label{subsec:ex3}

Both cases treated in the previous subsections, where the limiting measure was derived fully explicitly, can be thought of  as special cases 
of the more general symbol
\begin{equation}
 b(z)=\frac{1}{z^{r}}(1+az)^{r+s},
\label{eq:def_b_gener_ex}
\end{equation}
(up to a shift by a constant term), where $r,s,\in\N$ and $a\in\R\setminus\{0\}$. By using the binomial formula, one computes
\[
 h_{m}=\frac{1}{2\pi}\int_{0}^{2\pi}b^{m}\left(e^{\ii\theta}\right)\dd\theta=\binom{(r+s)m}{rm}a^{rm}, \quad m\in\N_{0}.
\]

First, we prove that $b\in\mathscr{R}$ for all $r,s\in\N$ and $a\in\R\setminus\{0\}$ by showing that $b^{-1}(\R)$ contains a Jordan curve. Without loss of generality, 
we may again put $a=1$ (otherwise one can take the $1/a$ multiple of the curve $\gamma$ given below). Define
\begin{equation}
 \gamma(t):=\frac{\sin\omega t}{\sin(1-\omega)t}e^{\ii t}, \quad t\in(-\pi,\pi],
\label{eq:def_gamma_spec_exam}
\end{equation}
where $\omega:=r/(r+s)$. The value $\gamma(0)=r/s$ is determined by the corresponding limit. Figure~\ref{fig:jordan_spec_ex} shows the Jordan $\gamma$ for three special choices of $r$ and $s$.
Expressing the sine function in terms of complex exponentials, one easily
verifies that
\begin{equation}
 b(\gamma(t))=\frac{\sin^{r+s} t}{\sin^{r}\!\left(\omega t\right)\sin^{s}\!\left((1-\omega)t\right)}, \quad t\in(-\pi,\pi],
\label{eq:b_comp_gamma_spec_exam}
\end{equation}
where the value at $t=0$ is the respective limit. Thus, $b$ restricted to the image of $\gamma$ is real-valued and Theorem~\ref{thm:summary} implies $b\in\mathscr{R}$.

The parametrization \eqref{eq:def_gamma_spec_exam} has the polar form~\eqref{eq:gamma_polar_par}. Moreover, $b$ has no critical point on the arc of $\gamma$ in the upper half-plane since
\[
 b'(\gamma(t))=\frac{s\gamma(t)-r}{(1+\gamma(t))\gamma(t)}b(\gamma(t))\neq 0, \quad \forall t\in(0,\pi),
\]
as one readily verifies. Taking  into account that 
\[
 b(\gamma(0))=\frac{(r+s)^{r+s}}{r^{r}s^{s}} \quad \mbox{ and } \quad b(\gamma(\pi))=0,
\]
we see that $b\circ\gamma$ is strictly decreasing on $(0,\pi)$. Consequently, by Theorem~\ref{thm:main_suff}, the limiting mesure is supported on the interval
\begin{equation}
 \supp\mu=\left[0,r^{-r}s^{-s}(r+s)^{r+s}\right]
 \label{eq:supp_mu_spec_exam}
\end{equation}
and, according to~\eqref{eq:mu_dist_func_simple_non_invert}, the distribution function of $\mu$ satisfies
\[
 F_{\mu}\left(b(\gamma(t))\right)=1-\frac{t}{\pi}, \quad \mbox{ for }\; 0\leq t\leq \pi.
\]

It seems that, for general parameters $r$ and $s$, the density of $\mu$ can not be expressed explicitly because the function in~\eqref{eq:b_comp_gamma_spec_exam} cannot be inverted;
see Figure~\ref{fig:density_spec_ex} for numerical plots of the density of $\mu$. Let us point out that, besides the cases $r=1$
and either $s=1$ or $s=2$, one can also derive explicitly the density for the self-adjoint 5-diagonal case, i.e.  $r=s=2$ ($a=1$). The resulting formula reads 
\[
 \rho(x)=\frac{\sqrt{4+\sqrt{x}}}{2\pi x^{3/4}\sqrt{16-x}}, \quad x\in(0,16),
\]
since the  Weyl $m$-function can be expressed as
\[
 m_{b}(z)=-\frac{1}{z}\sqrt{\frac{1+\sqrt{1-16/z}}{2(1-16/z)}}\!, \quad z\in\C\setminus[0,16].
\]
The above formula follows from the generating function of $\{h_{m}\}_{m=0}^{\infty}$ which, for $|z|<1/16$, reads
\begin{align*}
 \sum_{m=0}^{\infty}\binom{4m}{2m}z^{m}&={}_{2}F_{1}\left(\frac{1}{4},\frac{3}{4};\frac{1}{2};16z\right)=
 \frac{\sqrt{\sqrt{1-16z}+4\ii\sqrt{z}}+\sqrt{\sqrt{1-16z}-4\ii\sqrt{z}}}{2\sqrt{1-16z}},\\
 &=\sqrt{\frac{1+\sqrt{1-16z}}{2(1-16z)}},
\end{align*}
where we make use of the identity~\eqref{eq:id_2F1_prudnikov} with $c=3/4$.

Derivation of a closed formula for the sequences $\{a_{n}\}_{n=1}^{\infty}$ and $\{b_{n}\}_{n=1}^{\infty}$ determining the Jacobi operator $J(b)$ for general values of $r$ and $s$, 
seems to be out of our reach. Even a closed expression for the determinant of the Hankel matrix $H_{n}$ determined by the sequences
 \[
  h_{m}=\binom{4m}{m} \quad\mbox{ or }\quad h_{m}=\binom{4m}{2m},
 \]
corresponding to the special cases with $a=1$, $r=1$ and $s=3$ or $r=2$ and $s=2$, respectively, is unknown, to the best of our knowledge. 
 In fact, numerical experiments with the sequence of these determinants show 
a presence of huge prime factors which might indicate that no closed formula, similar to~\eqref{eq:det_H_n_4diag}, exists for the Hankel 
determinants. Nevertheless, Theorem~\ref{thm:compact} and equation~\eqref{eq:supp_mu_spec_exam} yield the limit formulas:
\[
 2\lim_{n\to\infty} a_{n}=\lim_{n\to\infty} b_{n}=\frac{(r+s)^{r+s}}{2r^{r}s^{s}},
\]
for all $r,s\in\N$.

\subsection{More general examples based on Example 3}

Most of the results of this paper are devoted to banded Toeplitz matrices but Theorem~\ref{thm:gamma_impl_real_spec} is applicable to matrices with more general symbols.
A combination of the previous example with the symbol \eqref{eq:def_b_gener_ex} and Theorem~\ref{thm:gamma_impl_real_spec} provides us with more examples of possibly 
non-self-adjoint Toeplitz matrices given by a more general symbol. For instance, one can proceed as follows. 

If $f$ is a function analytic on $\C\setminus\{0\}$ which maps $\R\setminus\{0\}$ to $\R$ and $b$ is as in~\eqref{eq:def_b_gener_ex}, then the symbol $a:=f\circ b$ satisfies
the assumptions of Theorem~\ref{thm:gamma_impl_real_spec} with the Jordan curve given by \eqref{eq:def_gamma_spec_exam}. Consequently, $\spec T_{n}(a)\subset\R$ for all $n\in\N$.
To be even more concrete, the possible choices of $f$ comprise, e.g., $f\in\{\exp,\sin,\cos,\sinh,\cosh,\dots\}$, producing many examples of non-self-adjoint and non-banded Toeplitz matrices 
whose principal submatrices have purely real spectrum.

%
%

\subsection{Various numerical experiments}\label{subsec:var_numer}

To illustrate a computational applicability of Theorems~\ref{thm:main_suff_generalized} and~\ref{thm:main_suff}, we add two more complicated examples treated numerically 
using Wolfram Mathematica. In particular, to emphasize the connection between the existence of a Jordan curve in $b^{-1}(\R)$ and the reality of the limiting set
of eigenvalues of the corresponding Toeplitz matrices, we provide some numerical plots on this account.

\subsubsection{Example 4}
We plot the numerically obtained  density  by applying Theorem~\ref{thm:main_suff} to the symbol 
\[
 b(z)=\frac{1}{z^3}-\frac{1}{z^2}+\frac{7}{z}+9z-2z^2+2z^3-z^4,
\]
whose net is shown in Figure~\ref{fig:net_illust_yes_no} (left). 

Although we can only check the validity of  the assumptions of Theorem~\ref{thm:main_suff}  numerically, looking at Figure~\ref{fig:net_illust_yes_no} it is reasonable to believe the Jordan curve present in $b^{-1}(\R)$ 
can be parametrized by polar coordinates \eqref{eq:gamma_polar_par}. Further, a numerical computation indicates that no critical point of $b$ lies on the Jordan curve in the upper half-plane.
The real critical points of $b$ closest to the origin are $z_{1}\approx1.077904$ and $z_{2}\approx-0.844126$ and the corresponding critical values are $b(z_{1})\approx14.9641$ and 
$b(z_{2})\approx-22.0915$. So the limiting measure $\mu$ should be supported approximately on the interval $[-22.09,14.96]$ which is in agreement with the numerical results  obtained by 
an implementation of the algorithm for the  computation of  $\Lambda(b)$ suggested in~\cite{beamwarming_siamjsc93}.
The density of $\mu$ is plotted in Figure~\ref{fig:density_ex4}.

\subsubsection{Example 5} For an illustration of Theorem~\ref{thm:main_suff_generalized} which allows to have non-real critical points of~$b$ on the arc of Jordan curve in $b^{-1}(\R)$, 
we consider the symbol
\[
 b(z)=\frac{1}{z^{3}}+\frac{1}{z^{2}}+\frac{1}{z}+z+z^{2}+z^{3}.
\]
The corresponding Toeplitz matrix $T(b)$ is self-adjoint and hence the Jordan curve in $b^{-1}(\R)$ is the unit circle. The function $b$ has two critical points with positive imaginary part
on the unit circle, namely
\begin{align*}
 z_{1}&:=\frac{1}{6}\left(-1 + \sqrt{7} + \ii\sqrt{2(14 + \sqrt{7})}\right)\approx 0.274292 + 0.961646\ii,\\
 z_{2}&:=\frac{1}{6}\left(-1 - \sqrt{7} + \ii\sqrt{2(14 - \sqrt{7})}\right)\approx -0.607625 + 0.794224\ii
\end{align*}
and the corresponding values are
\[
 b(z_{1})\approx-2.63113 \quad \mbox{ and } \quad b(z_{2})\approx0.112612,
\]
see Figure \ref{fig:net_ex5}. Theorem~\ref{thm:main_suff_generalized} tells us that each arc of the unit circle between two critical points gives rise to a measure. Labeling the 
measures in agreement with Theorem~\ref{thm:main_suff_generalized} (starting at $1$ and traversing the arcs of the unit circle in the counterclockwise direction) we get measures 
$\mu_{1}$, $\mu_{2}$, and $\mu_{3}$, which, since $b(1)=6$ and $b(-1)=-2$, are supported approximately on the intervals $[-2.63, 6]$, $[-2.63,0.11]$, and $[-2,0.11]$, respectively. 
The limiting measure then equals $\mu=\mu_{1}+\mu_{2}+\mu_{3}$. The illustration of the corresponding densities are given in Figures~\ref{fig:densities_ex5} and ~\ref{fig:density_ex5}.

The graph of the density of $\mu$ suggests that the eigenvalues of $T_{n}(b)$ cluster with higher density to  the left of the point $0.11$ which has also been observed numerically, see Figure~\ref{fig:eigen_plot_ex5}.

\subsubsection{Breaking the reality of $\Lambda(b)$}
Our final plots are devoted to an illustration of the connection between the presence of a Jordan curve in $b^{-1}(\R)$ and the reality of $\Lambda(b)$. We go back to the Example~4 once again  
and introduce an additional parameter $\alpha\in\R$ at $z^{2}$ getting the symbol
\[
 b(z)=\frac{1}{z^3}-\frac{1}{z^2}+\frac{7}{z}+9z+\alpha z^2+2z^3-z^4.
\]

In Figure~\ref{fig:break_ex}, it is shown that, as the parameter $\alpha$ increases from $-2$ to $2$, the Jordan curve in $b^{-1}(\R)$ gets ``destroyed'' by an incoming curve from the net of $b$.
At the same time, $\Lambda(b)$ changes from a real interval to a set with non-real values. The set $\Lambda(b)$ is plotted by using the algorithm from~\cite{beamwarming_siamjsc93}.


\section*{Acknowledgments}

The authors wish to express their gratitude to Professor A.~B.~J.~Kuijlaars for pointing out an error in the first version of the manuscript
and to Professor M.~Duits for his interest in the topic.

\section*{Appendix - a determinant formula}

The following determinant formula presented in Lemma~\ref{lem:det_H_n_tridiag} was used in Subsection~\ref{subsec:ex1}. It can be deduced from the well-known identity for the Hankel transform of the sequence of central binomial
coefficients. Although the derivation is elementary, it is not completely straightforward and therefore we provide it with its proof for the convenience of a reader.

\begin{lem}\label{lem:det_H_n_tridiag}
 Let $n\in\N$ and $H_{n}$ be the Hankel matrix of the form~\eqref{eq:hankel_matrix} with elements given by~\eqref{eq:h_m_tridiag}, then one has
 \[
  \det H_{n}=2^{n-1}a^{n(n-1)/2}.
 \]
\end{lem}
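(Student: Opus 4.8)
The plan is to compute $\det H_n$ where $H_n$ is the Hankel matrix built from the moments $h_{2k}=\binom{2k}{k}a^k$ and $h_{2k-1}=0$. First I would factor out the parameter $a$: since $h_m = a^{m/2}c_m$ for even $m$ (with $c_m$ the central binomial coefficient) and $h_m=0$ otherwise, a row-and-column scaling argument shows $\det H_n = a^{n(n-1)/2}\det C_n$, where $C_n$ is the Hankel matrix with entries $(C_n)_{i,j} = h_{i+j-2}$ evaluated at $a=1$, i.e. the entries are $\binom{i+j-2}{(i+j-2)/2}$ when $i+j$ is even and $0$ when $i+j$ is odd. More precisely, conjugating by $\diag(1,a^{1/2},a,\dots,a^{(n-1)/2})$ multiplies the $(i,j)$ entry by $a^{(i+j-2)/2}$, which is exactly the power of $a$ carried by $h_{i+j-2}$; the fractional powers cancel because the nonzero entries only occur when $i+j-2$ is even.

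So it remains to show $\det C_n = 2^{n-1}$. The key fact I would invoke is the classical Hankel determinant evaluation for the sequence of central binomial coefficients: the Hankel transform of $\{\binom{2k}{k}\}_{k\ge 0}$, i.e. $\det\bigl[\binom{2(i+j)}{i+j}\bigr]_{i,j=0}^{n-1}$, equals $2^{n-1}$. (This is a standard identity; one reference is the literature on Hankel determinants of binomial sequences, and it follows from Dodgson condensation or the Lindström–Gessel–Viennot lemma applied to suitable lattice paths.) However, our matrix $C_n$ is not that Hankel matrix — it is the ``checkerboard'' matrix containing the odd zero moments as well. The main obstacle is therefore to relate $\det C_n$ to the dense central-binomial Hankel determinant.

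To bridge this gap I would use the standard even/odd splitting of a Hankel matrix whose odd moments vanish. After a simultaneous permutation of rows and columns that groups the odd indices together and the even indices together, $C_n$ becomes block-diagonal (the cross blocks vanish because $h_{\text{odd}}=0$), with one block being the Hankel matrix of $\{h_{2k}\}=\{\binom{2k}{k}\}$ of size $\lceil n/2\rceil$ and the other the Hankel matrix of $\{h_{2k+2}\}=\{\binom{2k+2}{k+1}\}$ of size $\lfloor n/2\rfloor$. The permutation contributes a sign $(-1)^{\sigma}$ which I would need to track carefully; by the known Hankel determinant formulas for $\binom{2k}{k}$ and for its shift $\binom{2k+2}{k+1}$ (the latter Hankel transform being $2^{k-1}(k+1)$ or a similar closed form), the product of the two block determinants together with the permutation sign should collapse to $2^{n-1}$. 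I expect the sign bookkeeping in this even/odd rearrangement, and confirming the exact closed form of the shifted-sequence Hankel determinant, to be the fiddly part; everything else is routine.

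An alternative, possibly cleaner route that avoids the shifted-sequence evaluation: directly set up a three-term recurrence for $D_n := \det H_n$ via Dodgson condensation (Desnanot–Jacobi), namely $D_{n+1}D_{n-1} = D_n \det H_n^{(c)} - (\det H_n^{(r)})^2$ relating $D_{n+1}$, $D_n$, $D_{n-1}$ and the two ``almost-Hankel'' minors; one checks by the $a$-scaling that it suffices to handle $a=1$, computes $D_1=1$, $D_2 = h_0h_2 - h_1^2 = 2$, and then verifies that $D_n = 2^{n-1}$ (at $a=1$) satisfies the resulting recurrence once the minor determinants are also identified in closed form. Reinstating $a$ via the first paragraph then yields $\det H_n = 2^{n-1}a^{n(n-1)/2}$, as claimed. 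I would present whichever of these two is shorter to write out cleanly, citing the central-binomial Hankel transform as the one nontrivial input.
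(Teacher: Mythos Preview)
Your main approach --- reducing to $a=1$ by diagonal scaling, then splitting $C_n$ into even- and odd-indexed blocks and invoking known Hankel determinants of central binomial coefficients --- is exactly the paper's proof. Two small clarifications where you hedged: (i) since you apply the \emph{same} permutation to rows and columns, the determinant is unchanged ($\det(PHP^{T})=\det H$), so there is no sign to track; (ii) the shifted Hankel determinant $\det\bigl[\binom{2(i+j)}{i+j}\bigr]_{i,j=1}^{m}$ equals $2^{m}$ (not $2^{m-1}(m+1)$), and together with the unshifted value $2^{m-1}$ the product $2^{\lceil n/2\rceil-1}\cdot 2^{\lfloor n/2\rfloor}=2^{n-1}$ drops out immediately. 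The Dodgson alternative is unnecessary.
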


\begin{proof}
 First note that, if $H_{n}$ and $G_{n}$ are two Hankel matrices with $(H_{n})_{i,j}=h_{i+j-2}$ and $(G_{n})_{i,j}=g_{i+j-2}$ such that $h_{m}=\alpha^{m} g_{m}$, for some $\alpha\in\C$, 
 then $H_{n}=D_{n}(\alpha)G_{n}D_{n}(\alpha)$ where $D_{n}(\alpha)=\diag(1,\alpha,\dots,\alpha^{n-1})$. Therefore it suffices to verify the statement for $a=1$.

 Let $k,n\in\N$ and $G_{n}^{(k)}$ be the Hankel matrix whose entries are given by
 \[
  \left(G_{n}^{(k)}\right)_{i,j}=\binom{2(i+j+k-2)}{i+j+k-2}, \quad  \mbox{ for } i,j\in\{1,2,\dots, n\}.
 \]
 The formula for $\det G_{n}^{(k)}$ is well-known and, in particular, one has 
 \begin{equation}
  \det G_{n}^{(1)}=2\det G_{n}^{(0)}=2^{n}, \quad \forall n\in\N,
 \label{eq:hank_det_centr_binom_01}
 \end{equation}
 see, for example, \cite{aigner_jctsa99, armassethuraman_jis08}.
 
 Next we make use of the direct sum decomposition $\C^{n}=\C_{\text{odd}}^{n}\oplus \C_{\text{even}}^{n}$ where
 \[
  \C_{\text{odd}}^{n}:=\spn\{e_{2j-1} \mid 1\leq 2j-1 \leq n\}  \quad \mbox{ and } \quad \C_{\text{even}}^{n}:=\spn\{e_{2j} \mid 1\leq 2j \leq n\}.
 \]
 Since $h_{2m-1}=0$, for all $m\in\N$, both subspaces
 $\C_{\text{odd}}^{n}$ and $\C_{\text{even}}^{n}$ are $H_{n}$-invariant, i.e., $H_{n}\C_{\text{odd}}^{n}\subset\C_{\text{odd}}^{n}$ and $H_{n}\C_{\text{even}}^{n}\subset\C_{\text{even}}^{n}$.
 The matrix $H_{n}$ decomposes accordingly as
 \[
  H_{n}=H_{n}^{\text{odd}}\oplus H_{n}^{\text{even}},
 \]
 where
 \[
  H_{n}^{\text{odd}}=G_{\lfloor(n+1)/2\rfloor}^{(0)} \quad \mbox{ and } \quad H_{n}^{\text{even}}=G_{\lfloor n/2\rfloor}^{(1)}.
 \]
 Thus, by using formulas in~\eqref{eq:hank_det_centr_binom_01}, one obtains
 \[
  \det H_{n}=\det\left(G_{\lfloor(n+1)/2\rfloor}^{(0)}\right)\det\left(G_{\lfloor n/2\rfloor}^{(1)}\right)=2^{n-1}.
 \]
\end{proof}

\newpage

\begin{figure}[h]
	\includegraphics[width= 0.48 \textwidth]{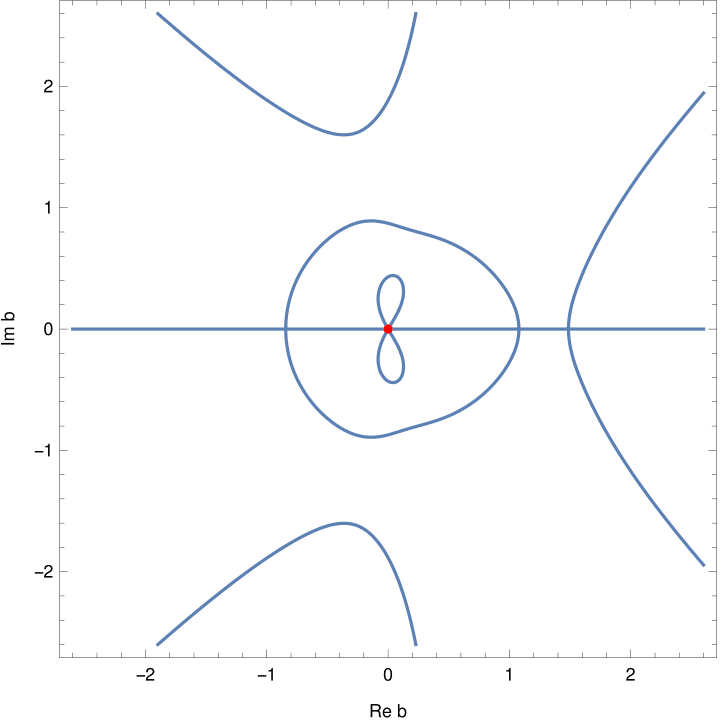} \quad 
	\includegraphics[width= 0.48 \textwidth]{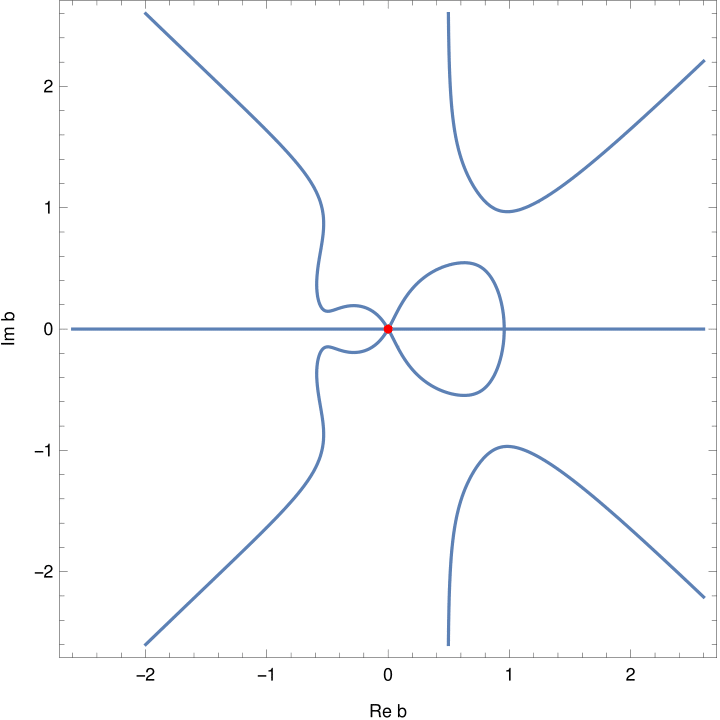}
	\caption{The plots of the net of $b(z)=1/z^3-1/z^2+7/z+9 z-2 z^2+2 z^3-z^4$ (left) and 
	$b(z)=-2/z^3-4/z^2+12/z+8 z-10 z^2+8 z^3-4 z^4$ (right). The red dot designates the origin. 
	On the left-hand side, $b^{-1}(\R)$ contains a Jordan curve, while this is not the case for the
	example plotted on the right-hand side.}
	\label{fig:net_illust_yes_no}
\end{figure}

\begin{figure}[h]
	\includegraphics[width= 0.9 \textwidth]{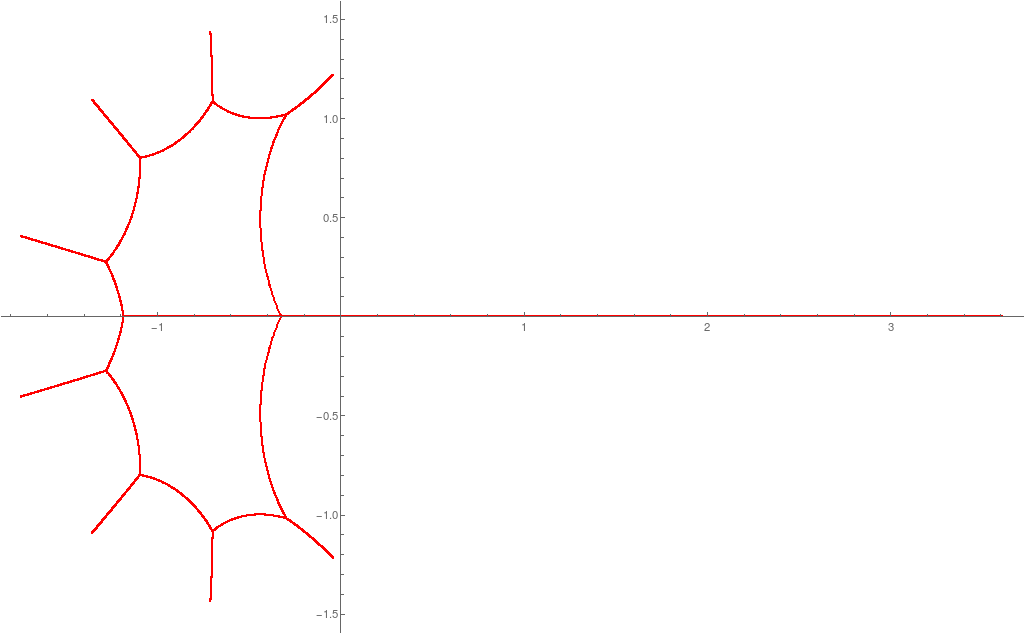}
	\caption{The set $\Lambda(b)$, for $b(z)=1/z+\sum_{k=1}^{10}kz^{k}$, which seems to separate the plane.}
	\label{fig:separ}
\end{figure}

\begin{figure}[h]
	\includegraphics[width= 0.7\textwidth]{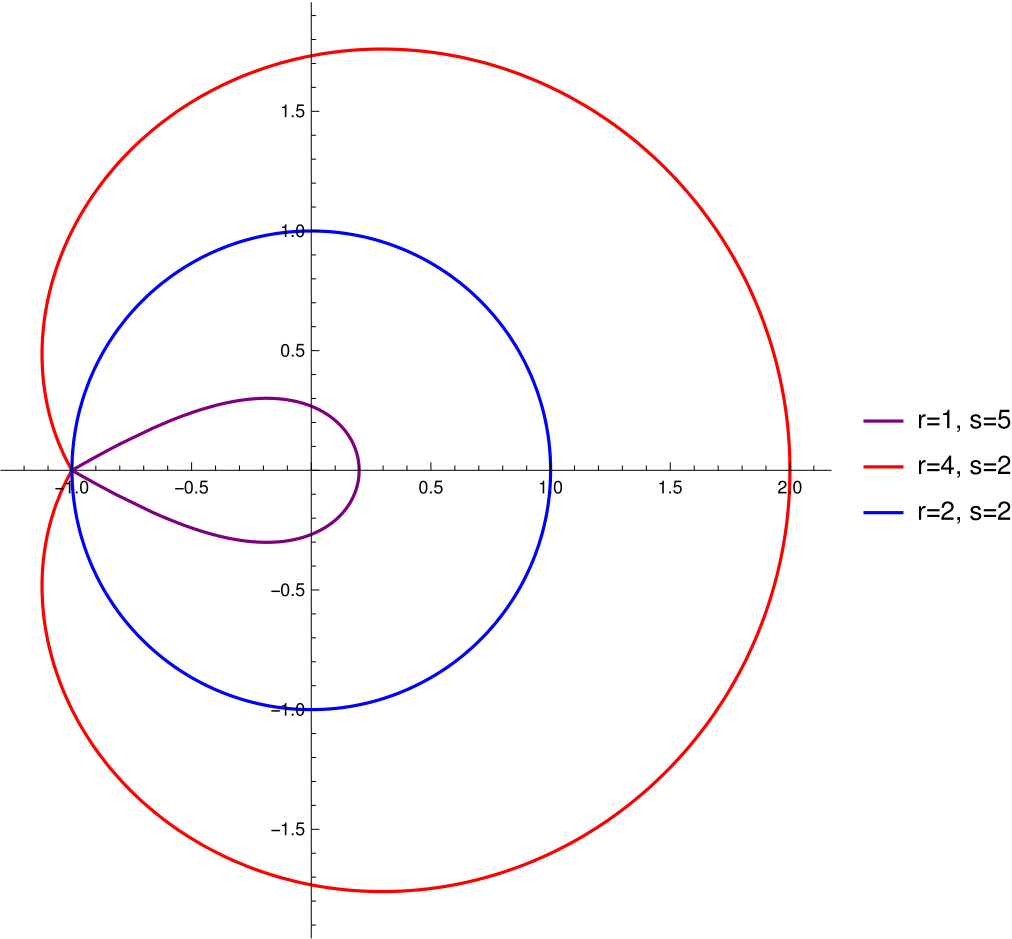}
	\caption{Illustration for Example~3: The plots of the Jordan curves $\gamma$ given by~\eqref{eq:def_gamma_spec_exam} 
	for 3 particular choices of parameters $r$ and $s$.}
	\label{fig:jordan_spec_ex}
\end{figure}

\begin{figure}[h]
	\includegraphics[width= 0.31 \textwidth]{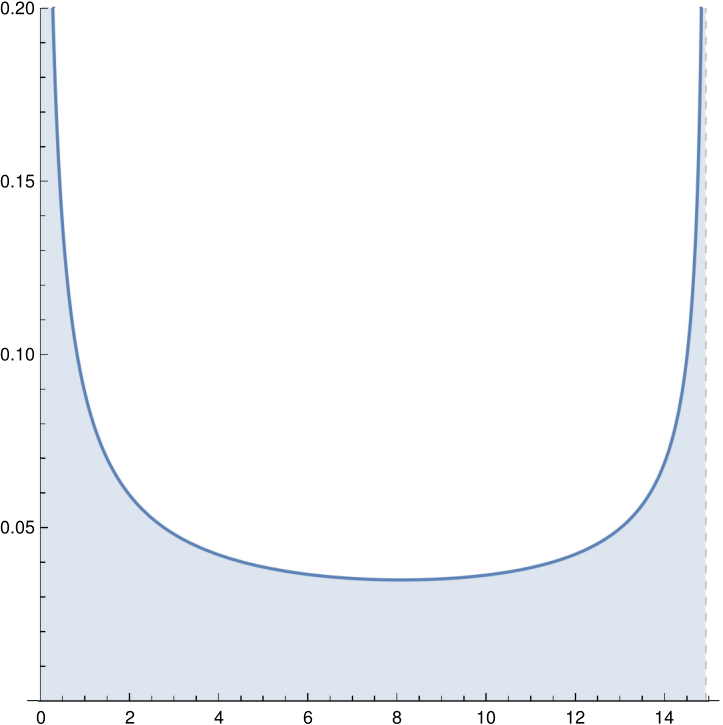} \quad 
	\includegraphics[width= 0.31 \textwidth]{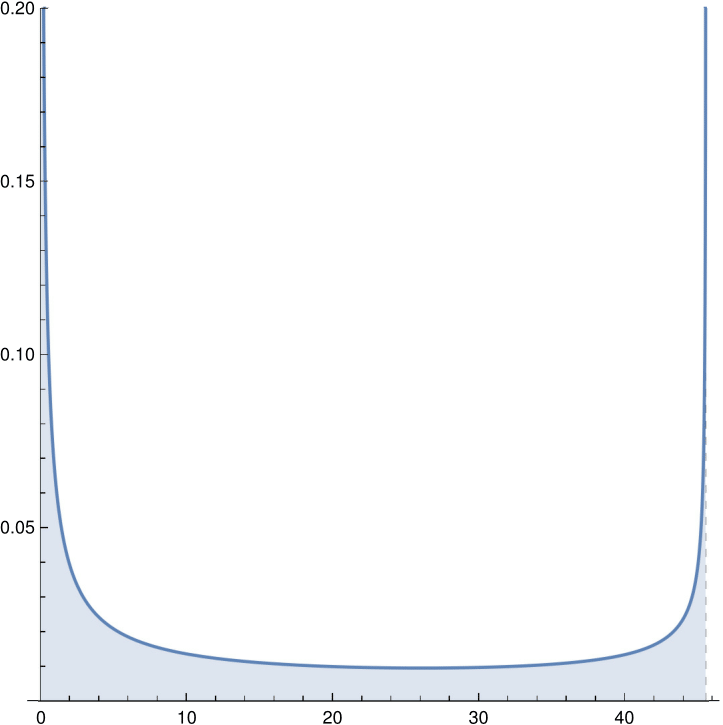} \quad
	\includegraphics[width= 0.31 \textwidth]{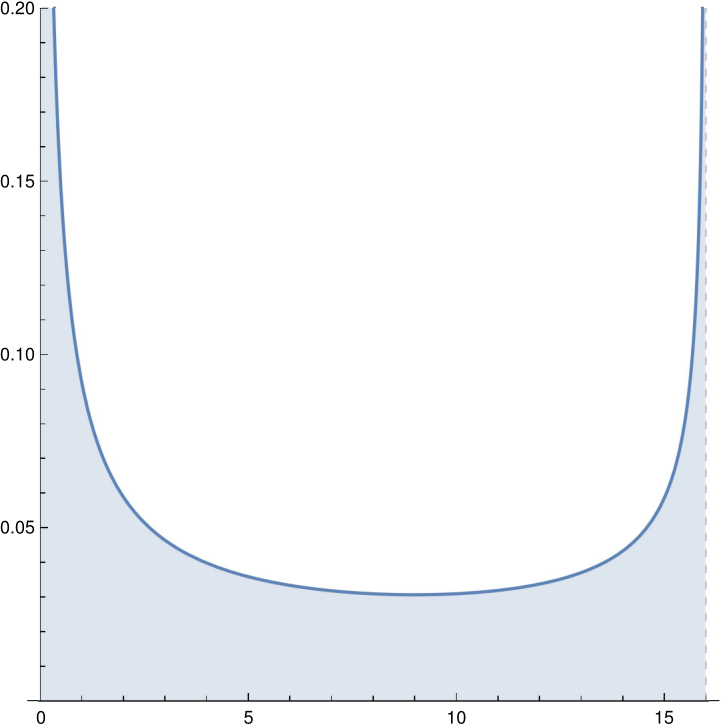}
	\caption{Illustration for Example~3: The plots of the density of $\mu$ for $r=1$, $s=5$ (left), $r=4$, $s=2$ (center), and $r=s=2$ (right).}
	\label{fig:density_spec_ex}
\end{figure}

\begin{figure}[h]
	\includegraphics[width= 0.8\textwidth]{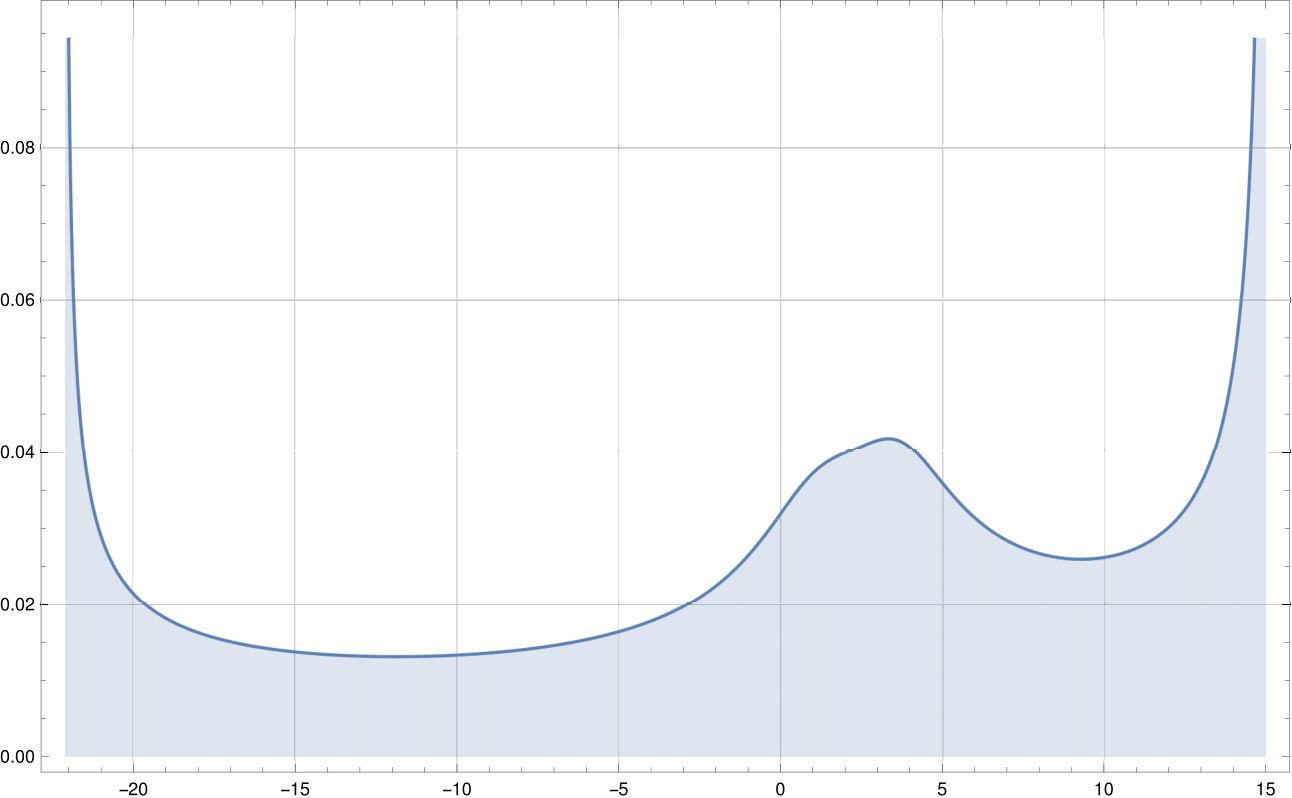}
	\caption{Illustration for Example~4: The plot of the density of $\mu$.}
	\label{fig:density_ex4}
\end{figure}

\begin{figure}[h]
	\includegraphics[width= 0.5\textwidth]{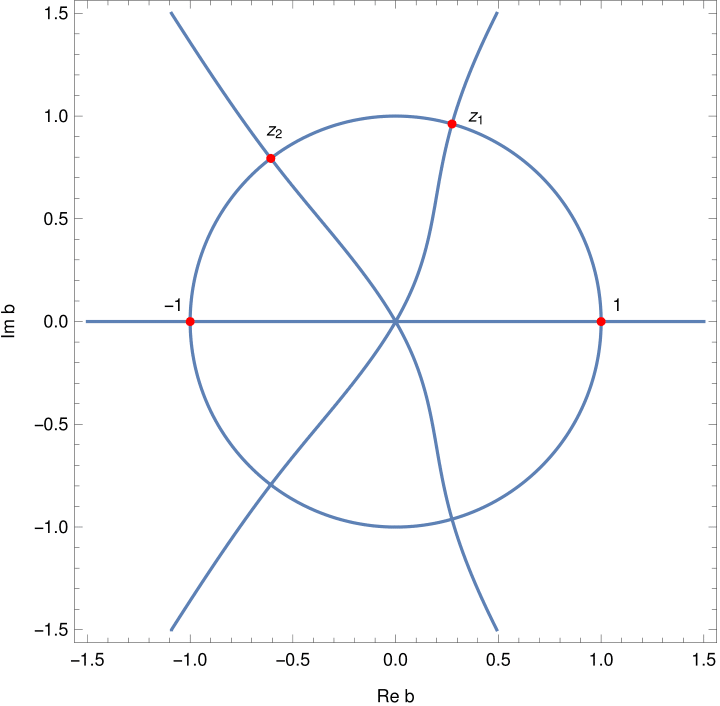}
	\caption{Illustration for Example~5: The plot of the net of $b(z)=1/z^3+1/z^2+1/z+z+z^2+z^3$.}
	\label{fig:net_ex5}
\end{figure}

\begin{figure}[h]
	\flushright\includegraphics[width= 0.9 \textwidth]{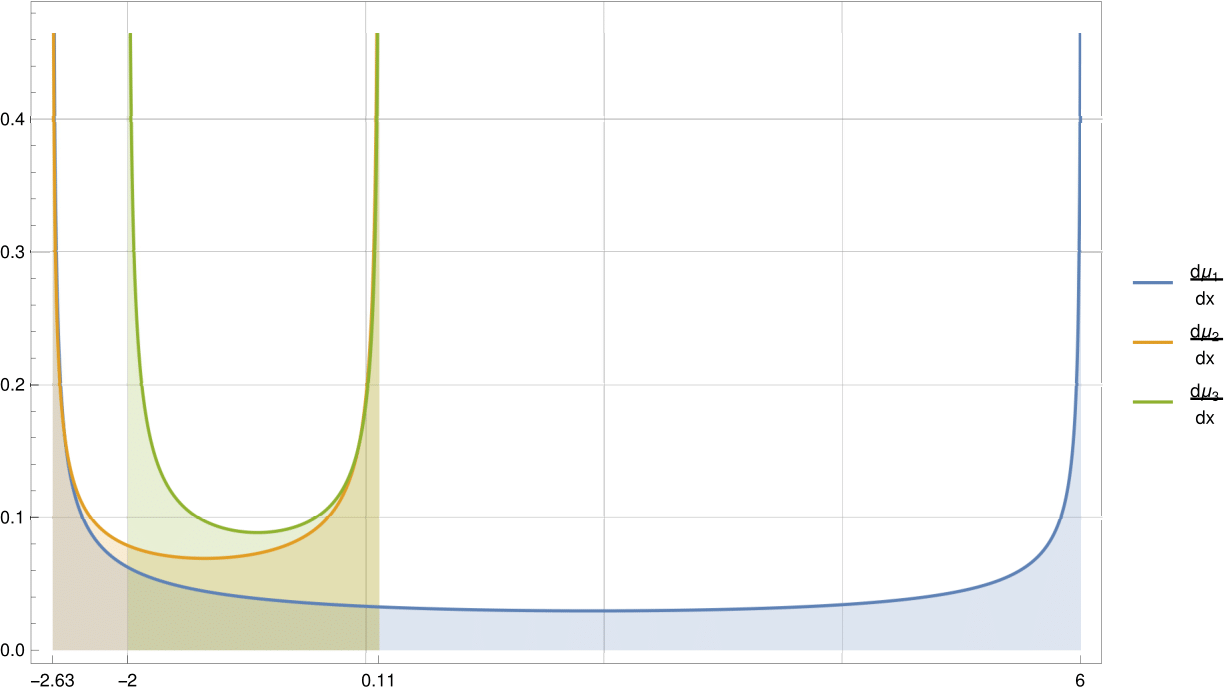}
	\caption{Illustration for Example~5: The plots of densities of $\mu_{1}$, $\mu_{2}$, and $\mu_{3}$.}
	\label{fig:densities_ex5}
\end{figure}

\begin{figure}[h]	
	\includegraphics[width= 0.9 \textwidth]{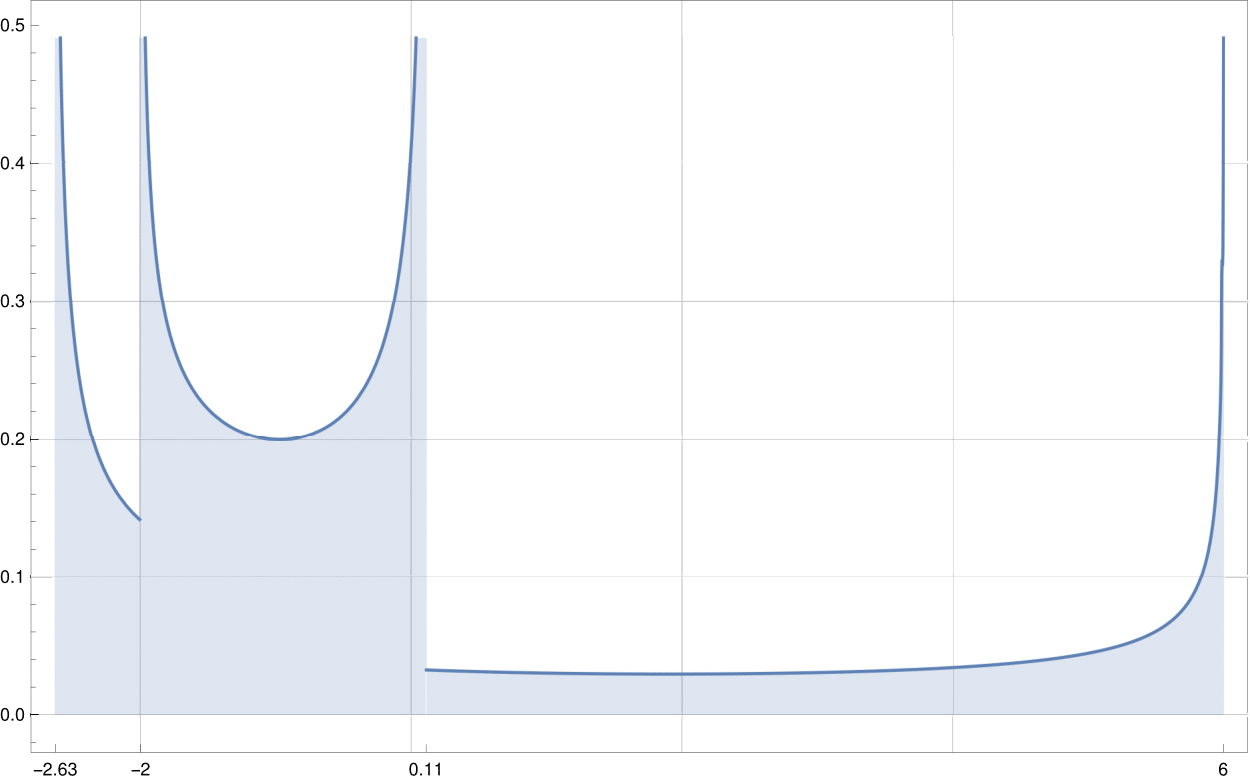}
	\caption{Illustration for Example~5: The plot of the density of $\mu$.}
	\label{fig:density_ex5}
\end{figure}

\begin{figure}[h]
	\includegraphics[width= 0.98 \textwidth]{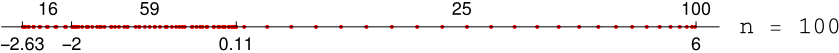}
	\vskip12pt
	\includegraphics[width= 0.98 \textwidth]{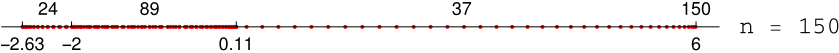} 
	\vskip12pt
	\includegraphics[width= 0.98 \textwidth]{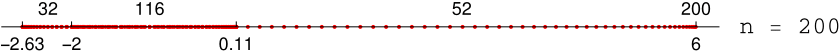}
	\caption{Illustration for Example~5: The distribution of eigenvalues of $T_{n}(b)$ for $n\in\{100,150,200\}$ in the interval $[-2.63,6]$. The numbers above the segments
	indicate the number of eigenvalues within the respective segment.}
	\label{fig:eigen_plot_ex5}
\end{figure}

\begin{figure}[h]
	\includegraphics[width= 0.19 \textwidth]{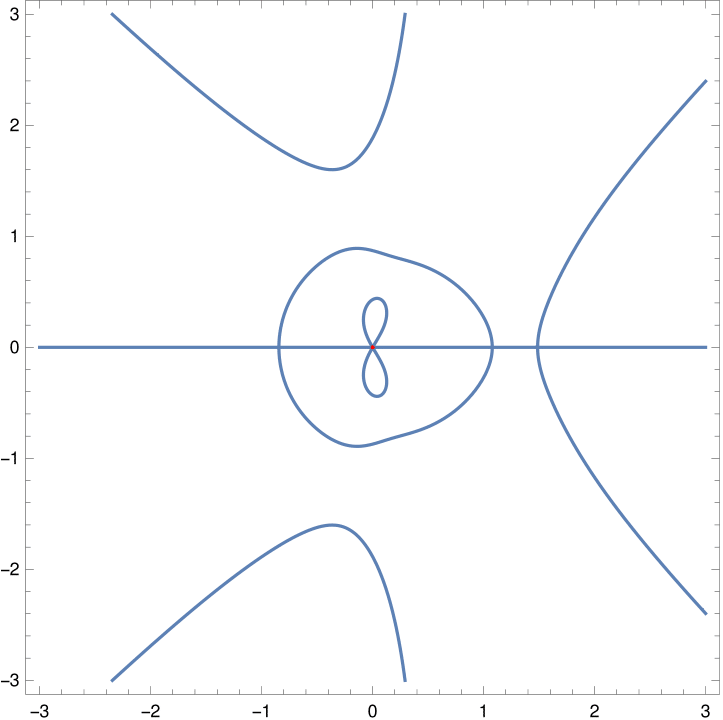} 
	\includegraphics[width= 0.19 \textwidth]{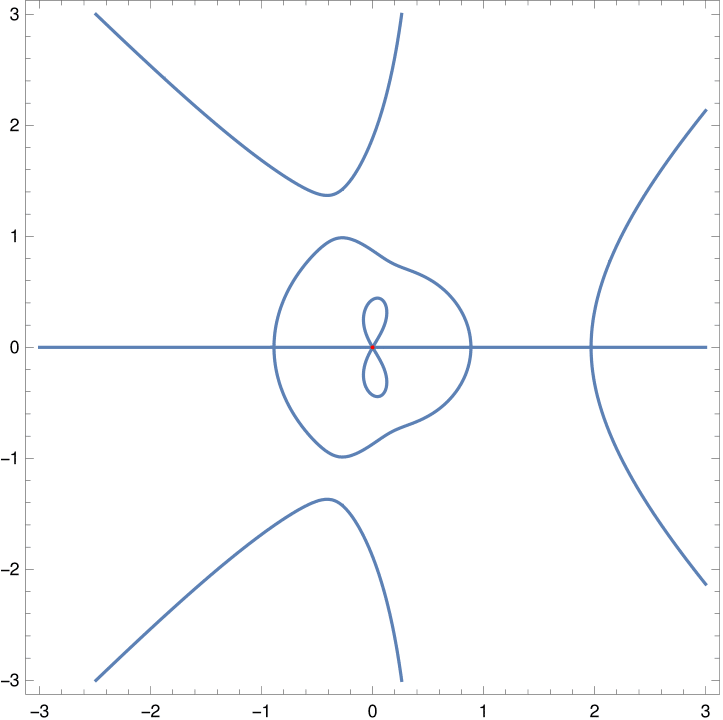} 
	\includegraphics[width= 0.19 \textwidth]{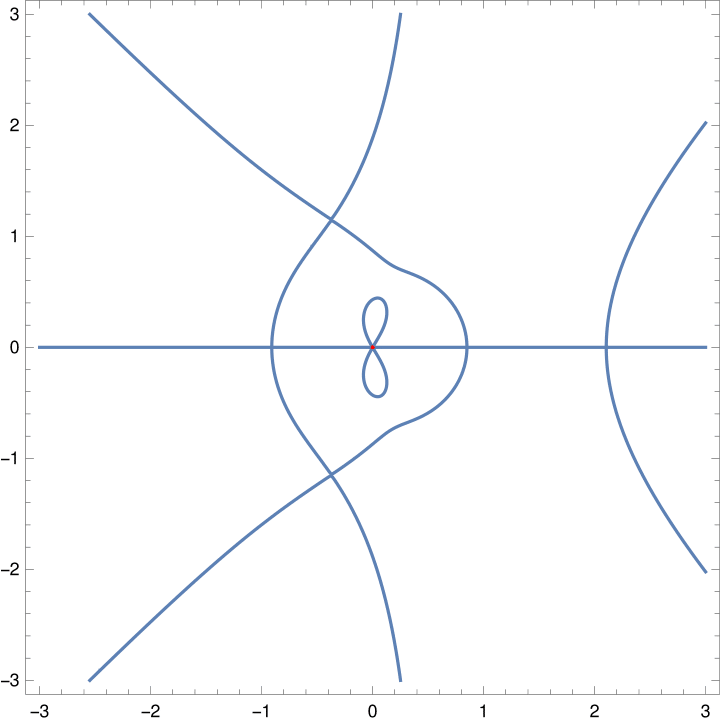} 
	\includegraphics[width= 0.19 \textwidth]{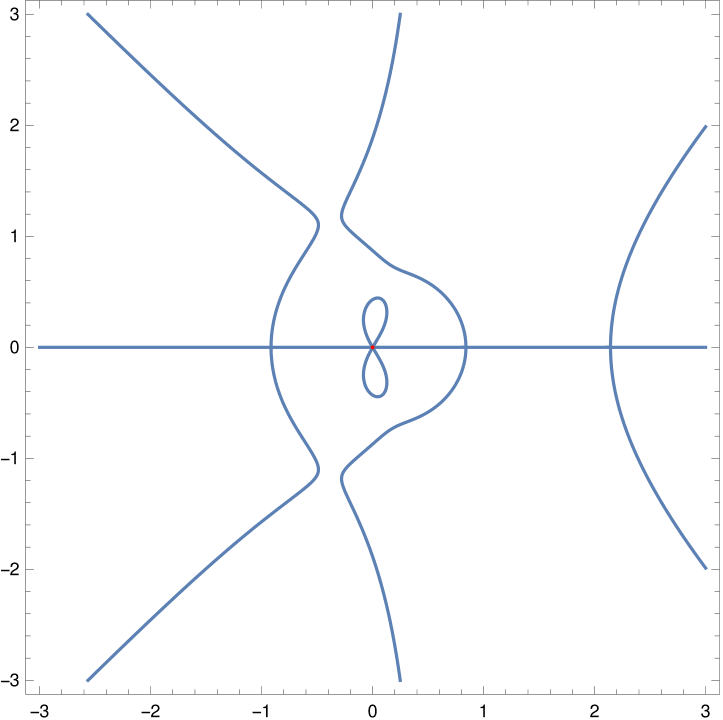}
	\includegraphics[width= 0.19 \textwidth]{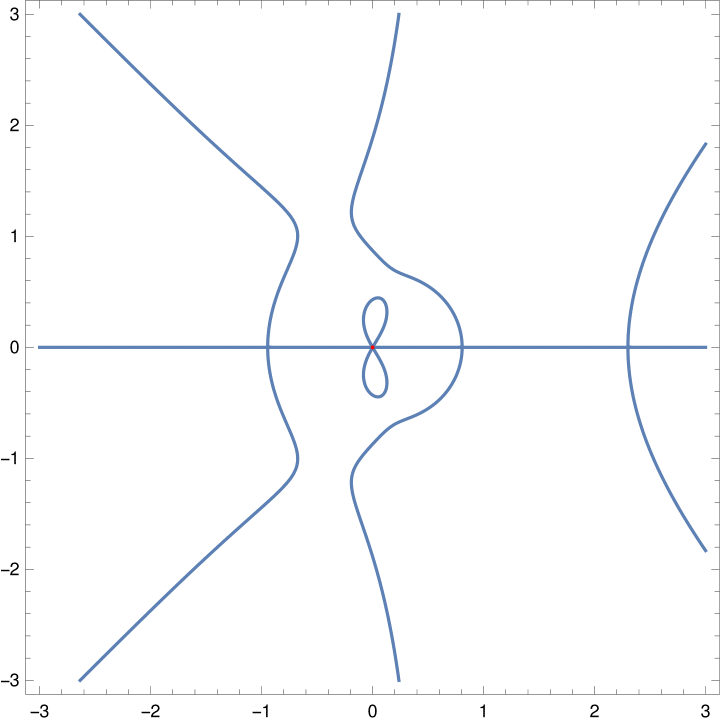}
	\vskip12pt
	\includegraphics[width= 0.19 \textwidth]{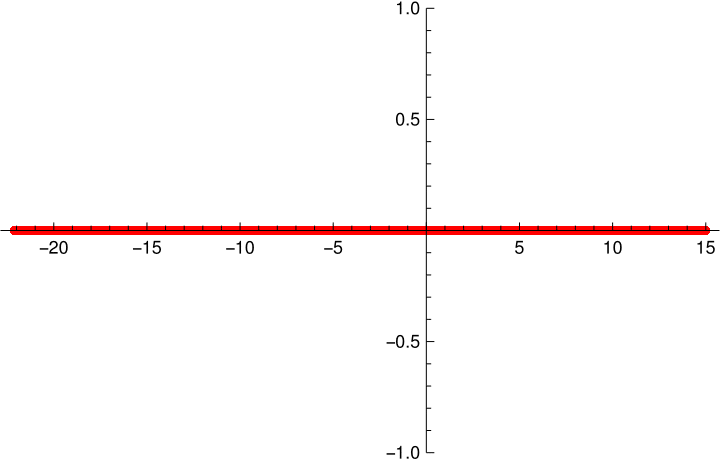}
	\includegraphics[width= 0.19 \textwidth]{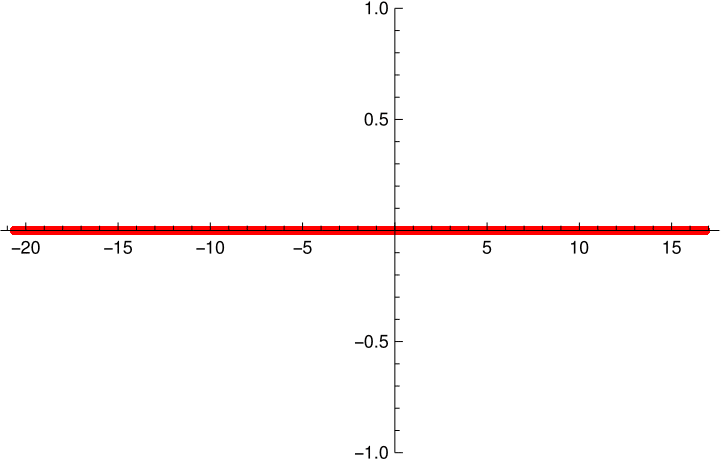} 
	\includegraphics[width= 0.19 \textwidth]{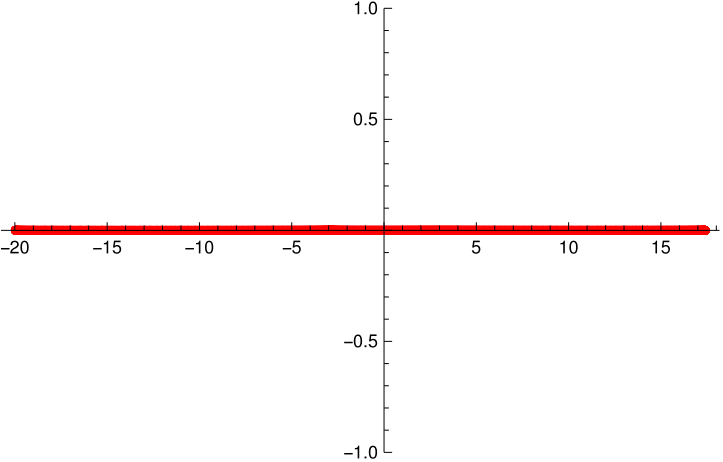} 
	\includegraphics[width= 0.19 \textwidth]{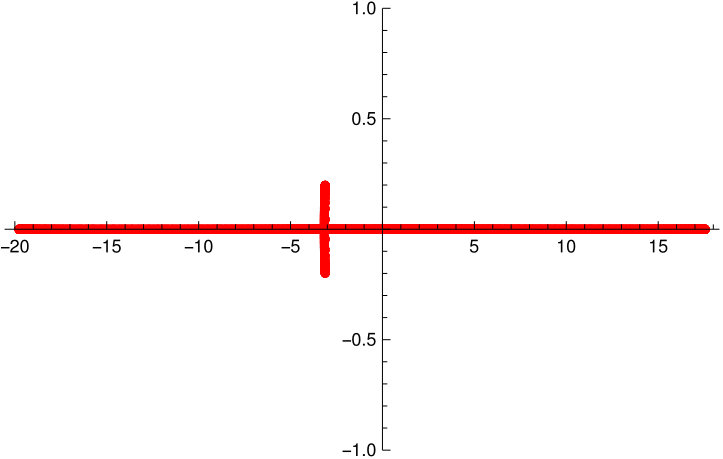} 
	\includegraphics[width= 0.19 \textwidth]{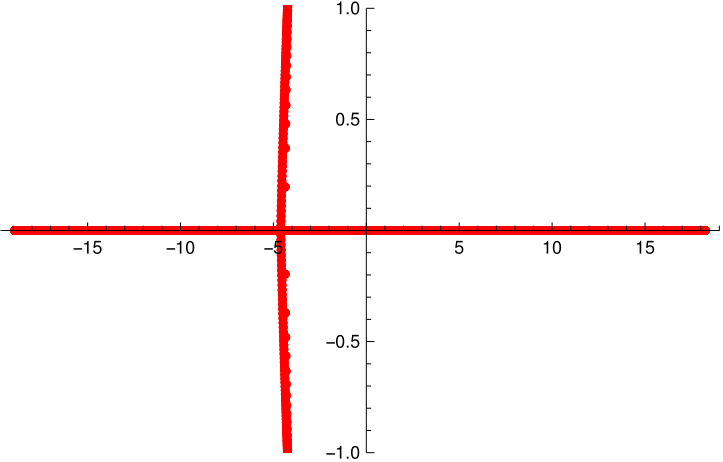} 
	\caption{The plots of the net of $b(z)=1/z^3-1/z^2+7/z+9 z+\alpha z^2+2 z^3-z^4$ (above) and 
	corresponding plots of $\Lambda(b)$ (below) for $\alpha\in\{-2,0,0.77,1,2\}$.}
	\label{fig:break_ex}
\end{figure}

$ $
\newpage 
$ $
\newpage
$ $
\newpage
$ $
\newpage
$ $
\newpage

\section*{Errata to "Non-self-adjoint Toeplitz matrices whose principal submatrices have real spectrum"}

\date{\today}

\begin{abstract}
We announce an error in the proof of Theorem~8 of this article which has been published in \emph{Constr.} \emph{Approx.} \textbf{48}(2) (2019) 191--226.
\end{abstract}

\maketitle

In the proof of Theorem~\ref{thm:gamma_impl_real_spec}, the false equality
\[
 \langle u,T_{n}(\overline{a})u\rangle=\langle f_{u},\overline{a}f_{u}\rangle_{\gamma}
\]
has been used. If the terms are interpreted using the notation from Section~\ref{sec:main}, the right-hand side equals the integral
\[
 \frac{1}{2\pi\ii}\int_{-\pi}^{\pi}\overline{a(\gamma(t))}f_{u}(\gamma(t))\overline{f_{u}(\gamma^{*}(t))}\frac{\dot{\gamma}(t)}{\gamma(t)}\dd t,
\]
while the left-hand side coincides with $\langle u,T_{n}^{*}(a)u\rangle$ and can be expressed as the integral
\[
 \frac{1}{2\pi\ii}\int_{-\pi}^{\pi}\overline{a(\gamma^{*}(t))}f_{u}(\gamma(t))\overline{f_{u}(\gamma^{*}(t))}\frac{\dot{\gamma}(t)}{\gamma(t)}\dd t.
\]
The two integrals do not coincide in general. Unfortunately, this error is an essential problem for the idea of the proof of Theorem~8, that was based on a contour integral representation of the quadratic form of a Toeplitz matrix $T_{n}(a)$, where the integration path is chosen as the Jordan curve $\gamma$ on which the symbol $a$ is real-valued.

Recall that Theorem~\ref{thm:summary}, which is the main result of the paper, claims that the following 3~statements are equivalent:
\begin{enumerate}
\item[(i)] $\Lambda(b)\subset\R$,
\item[(ii)] $b^{-1}(\R)$ contains a Jordan curve,
\item[(iii)] $\spec(T_{n}(b))\subset\R$ for all $n\in\N$,
\end{enumerate}
where $b$ is a Laurent polynomial, $T_{n}(b)$ the $n\times n$ Toeplitz matrix given by the symbol $b$, and $\Lambda(b)$ is the set of limit points of eigenvalues of $T_{n}(b)$, as $n\to\infty$. The logic of the proof of Theorem~1 was to establish implications (i)$\Rightarrow$(ii)$\Rightarrow$(iii)$\Rightarrow$(i). The currently unproven Theorem~8 was used to prove (ii)$\Rightarrow$(iii). As we are not able to find an alternative proof at the moment, the implication (ii)$\Rightarrow$(iii) remains unproven. 

However, we strongly believe that the currently unproven implication, or at least its weaker form (ii)$\Rightarrow$(i), is true. The implication (ii)$\Rightarrow$(iii) is supported by numerous numerical experiments that we have performed. Professors Yi Zhang and Hao-Yan Chen, to whom we are sincerely grateful for noticing the latter error in the proof, expressed a similar opinion.

Recall, by Definition~\ref{def:class_R}, that the Laurent polynomial $b$ is said to belong to the class $\mathscr{R}$, i.e. $b\in\mathscr{R}$, if and only if $\Lambda(b)\subset\R$. We list several places in our article, where arguments based on the claim of Theorem~\ref{thm:gamma_impl_real_spec} have been used:
\begin{enumerate}[(a)]
\item Remark~10.
\item Sec.~4.1: Example~1. The symbol $b(z)=z^{-1}+az$, where $a\in\C\setminus\{0\}$, belongs to the class $\mathscr{R}$, if and only if $a>0$.
\item Sec.~4.2: Example~2. The symbol $b(z)=z^{-1}+\alpha z+\beta z^{2}$, where $\alpha\in\C$ and $\beta\in\C\setminus\{0\}$, belongs to the class $\mathscr{R}$, if and only if $\beta\in\R\setminus\{0\}$ and $\alpha^{3}\geq27\beta^{2}$.
\item Sec.~4.3: Example~3. The symbol $b(z)=z^{-r}(1+az)^{r+s}$, where $r,s\in\N$ and $a\in\R\setminus\{0\}$, belongs to the class $\mathscr{R}$.
\item The second paragraph of Sec.~4.4.
\end{enumerate}

The claim of Remark~10 was drawn as a direct consequence of Theorem~8 and remains open, too. The other points (b)--(e) comprise concrete examples of symbols, which belong to $\mathscr{R}$ for specific restrictions of the parameters. As the main argument for these claims, we found an explicit parametrization of the Jordan curve $\gamma$, for which $b\circ\gamma$ is real-valued in each of the cases. We will prove at least partly the claims without using Theorem~8.

The equivalence in (b) can be verified directly since the eigenvalues of $T_{n}(b)$ can be computed fully explicitly 
\[
 \lambda_{k}=2(-1)^{n}\sqrt{a}\cos\frac{\pi k}{n+1},\quad k=1,2,\dots, n,
\]
with the standard branch of the square root. This example also exhibits (iii), if $a>0$. 

The point (c) is left conjectural as inessential for the paper in its generality. Its relevant particular case, $b\in\mathscr{R}$ for $\alpha=3a^{2}$ and $\beta=a^{3}$, with $a\in\R\setminus\{0\}$, will be checked as a~special case of the point (d) in the end of this corrigendum. Lastly, a description of a~possible construction of further examples of not necessarily banded Toeplitz matrices with real spectra from the point (e) is heavily based on the falsely proven Theorem~8 and also remains open at this point.

As a final correction, we briefly indicate the proof of the implication 
\[
 b(z)=\frac{1}{z^{r}}(1+az)^{r+s}\mbox{ with } r,s\in\N \;\mbox{ and }\; a\in\R\setminus\{0\}  \quad\Rightarrow\quad b\in\mathscr{R}.
\]
without using the fact that $b$ is real on a Jordan curve. This is the claim (d). In fact, one can prove the stronger claim that the eigenvalues of $T_{n}(b)$ are all real, for all $n\in\N$, i.e.~(iii). The argument relies on the theory of osciallatory matrices~\cite{gan-kre} and has been used for the special case $r=s=1$ in the proof of \cite[Thm.~2.8]{coussementetal_tams08}. Without loss of generality, we may assume $a=1$ since two Toeplitz matrices $T_{n}(b)$ and $T_{n}(b_{a})$, where $b_{a}(z):=b(az)$, are similar, if $a\neq0$. Thus, suppose $b(z)=z^{-r}(1+z)^{r+s}$. Notice the bidiagonal matrix $T_{n}(1+z)$ is totally non-negative, see~\cite[Def.~4, p.~74]{gan-kre}. Since $T_{n}(b)$ is a submatrix of $T_{n+r+s}^{r+s}(1+z)$, it is also totally non-negative; see~\cite[p.~74]{gan-kre}. Further, without going into details, let us mention the determinant of $T_{n}(b)$ ban be explicitly computed in terms of binomial coefficients as follows:
\[
 \det T_{n}(b)=\prod_{j=0}^{r-1}\binom{n+s+j}{s}\bigg/\binom{s+j}{j}=\prod_{j=0}^{r-1}\frac{(n+s+j)!j!}{(n+j)!(s+j)!}.
\]
The determinant is obviously non-vanishing and therefore $T_{n}(b)$ is non-singular. By~\cite[Thm.~10, p.~100]{gan-kre}, $T_{n}(b)$ is oscillatory and hence eigenvalues of $T_{n}(b)$ are all positive (and simple), see~\cite[Thm.~6, p.~87]{gan-kre}.

\section*{Acknowledgments}

The authors are grateful to Professors Yi Zhang and Hao-Yan Chen for pointing out the error.

\end{document}